\numberwithin{equation}{section}
\theoremstyle{plain}
\newtheorem{theorem}{Theorem}[section]
\newtheorem{proposition}[theorem]{Proposition}
\newtheorem{definition}[theorem]{Definition}
\newtheorem{corollary}[theorem]{Corollary}
\theoremstyle{definition}
\newtheorem{remark}[theorem]{Remark}
\newtheorem{problem}[theorem]{Problem}
\begin{document}

\newcommand{\eq}{equation}
\newcommand{\real}{\ensuremath{\mathbb R}}
\newcommand{\comp}{\ensuremath{\mathbb C}}
\newcommand{\rn}{\ensuremath{{\mathbb R}^n}}
\newcommand{\tn}{\ensuremath{{\mathbb T}^n}}
\newcommand{\rnp}{\ensuremath{\real^n_+}}
\newcommand{\rnn}{\ensuremath{\real^n_-}}
\newcommand{\Rn}{\ensuremath{{\mathbb R}^{n-1}}}
\newcommand{\no}{\ensuremath{\nat_0}}
\newcommand{\ganz}{\ensuremath{\mathbb Z}}
\newcommand{\zn}{\ensuremath{{\mathbb Z}^n}}
\newcommand{\zom}{\ensuremath{{\mathbb Z}_{\Om}}}
\newcommand{\zOm}{\ensuremath{{\mathbb Z}^{\Om}}}
\newcommand{\As}{\ensuremath{A^s_{p,q}}}
\newcommand{\Bs}{\ensuremath{B^s_{p,q}}}
\newcommand{\Fs}{\ensuremath{F^s_{p,q}}}
\newcommand{\Fsr}{\ensuremath{F^{s,\rloc}_{p,q}}}
\newcommand{\nat}{\ensuremath{\mathbb N}}
\newcommand{\Om}{\ensuremath{\Omega}}
\newcommand{\di}{\ensuremath{{\mathrm d}}}
\newcommand{\sn}{\ensuremath{{\mathbb S}^{n-1}}}
\newcommand{\Ac}{\ensuremath{\mathcal A}}
\newcommand{\Acs}{\ensuremath{\Ac^s_{p,q}}}
\newcommand{\Bc}{\ensuremath{\mathcal B}}
\newcommand{\Cc}{\ensuremath{\mathcal C}}
\newcommand{\cc}{{\scriptsize $\Cc$}${}^s (\rn)$}
\newcommand{\ccd}{{\scriptsize $\Cc$}${}^s (\rn, \delta)$}
\newcommand{\Fc}{\ensuremath{\mathcal F}}
\newcommand{\Lc}{\ensuremath{\mathcal L}}
\newcommand{\Mc}{\ensuremath{\mathcal M}}
\newcommand{\Ec}{\ensuremath{\mathcal E}}
\newcommand{\Pc}{\ensuremath{\mathcal P}}
\newcommand{\Efr}{\ensuremath{\mathfrak E}}
\newcommand{\Mfr}{\ensuremath{\mathfrak M}}
\newcommand{\Mbf}{\ensuremath{\mathbf M}}
\newcommand{\Dbb}{\ensuremath{\mathbb D}}
\newcommand{\Lbb}{\ensuremath{\mathbb L}}
\newcommand{\Pbb}{\ensuremath{\mathbb P}}
\newcommand{\Qbb}{\ensuremath{\mathbb Q}}
\newcommand{\Rbb}{\ensuremath{\mathbb R}}
\newcommand{\vp}{\ensuremath{\varphi}}
\newcommand{\hra}{\ensuremath{\hookrightarrow}}
\newcommand{\supp}{\ensuremath{\mathrm{supp \,}}}
\newcommand{\ssupp}{\ensuremath{\mathrm{sing \ supp\,}}}
\newcommand{\dist}{\ensuremath{\mathrm{dist \,}}}
\newcommand{\unif}{\ensuremath{\mathrm{unif}}}
\newcommand{\ve}{\ensuremath{\varepsilon}}
\newcommand{\vk}{\ensuremath{\varkappa}}
\newcommand{\vr}{\ensuremath{\varrho}}
\newcommand{\pa}{\ensuremath{\partial}}
\newcommand{\oa}{\ensuremath{\overline{a}}}
\newcommand{\ob}{\ensuremath{\overline{b}}}
\newcommand{\of}{\ensuremath{\overline{f}}}
\newcommand{\LA}{\ensuremath{L^r\!\As}}
\newcommand{\LcA}{\ensuremath{\Lc^{r}\!A^s_{p,q}}}
\newcommand{\LcdA}{\ensuremath{\Lc^{r}\!A^{s+d}_{p,q}}}
\newcommand{\LcB}{\ensuremath{\Lc^{r}\!B^s_{p,q}}}
\newcommand{\LcF}{\ensuremath{\Lc^{r}\!F^s_{p,q}}}
\newcommand{\Lb}{\ensuremath{L^r b^s_{p,q}}}
\newcommand{\Lf}{\ensuremath{L^r\!f^s_{p,q}}}
\newcommand{\La}{\ensuremath{L^r a^s_{p,q}}}
\newcommand{\Lob}{\ensuremath{L^r \ob{}^s_{p,q}}}
\newcommand{\Lof}{\ensuremath{L^r \of{}^s_{p,q}}}
\newcommand{\Loa}{\ensuremath{L^r\, \oa{}^s_{p,q}}}
\newcommand{\Lcoa}{\ensuremath{\Lc^{r}\oa{}^s_{p,q}}}
\newcommand{\Lcob}{\ensuremath{\Lc^{r}\ob{}^s_{p,q}}}
\newcommand{\Lcof}{\ensuremath{\Lc^{r}\of{}^s_{p,q}}}
\newcommand{\Lca}{\ensuremath{\Lc^{r}\!a^s_{p,q}}}
\newcommand{\Lcb}{\ensuremath{\Lc^{r}\!b^s_{p,q}}}
\newcommand{\Lcf}{\ensuremath{\Lc^{r}\!f^s_{p,q}}}
\newcommand{\id}{\ensuremath{\mathrm{id}}}
\newcommand{\tr}{\ensuremath{\mathrm{tr\,}}}
\newcommand{\trd}{\ensuremath{\mathrm{tr}_d}}
\newcommand{\trL}{\ensuremath{\mathrm{tr}_L}}
\newcommand{\ext}{\ensuremath{\mathrm{ext}}}
\newcommand{\re}{\ensuremath{\mathrm{re\,}}}
\newcommand{\Rea}{\ensuremath{\mathrm{Re\,}}}
\newcommand{\Ima}{\ensuremath{\mathrm{Im\,}}}
\newcommand{\loc}{\ensuremath{\mathrm{loc}}}
\newcommand{\rloc}{\ensuremath{\mathrm{rloc}}}
\newcommand{\osc}{\ensuremath{\mathrm{osc}}}
\newcommand{\pr}{\pageref}
\newcommand{\wh}{\ensuremath{\widehat}}
\newcommand{\wt}{\ensuremath{\widetilde}}
\newcommand{\ol}{\ensuremath{\overline}}
\newcommand{\os}{\ensuremath{\overset}}
\newcommand{\Li}{\ensuremath{\overset{\circ}{L}}}
\newcommand{\Ai}{\ensuremath{\os{\, \circ}{A}}}
\newcommand{\Ci}{\ensuremath{\os{\circ}{\Cc}}}
\newcommand{\dom}{\ensuremath{\mathrm{dom \,}}}
\newcommand{\SA}{\ensuremath{S^r_{p,q} A}}
\newcommand{\SB}{\ensuremath{S^r_{p,q} B}}
\newcommand{\SF}{\ensuremath{S^r_{p,q} F}}
\newcommand{\Hc}{\ensuremath{\mathcal H}}
\newcommand{\Lci}{\ensuremath{\overset{\circ}{\Lc}}}
\newcommand{\bmo}{\ensuremath{\mathrm{bmo}}}
\newcommand{\BMO}{\ensuremath{\mathrm{BMO}}}
\newcommand{\cm}{\\[0.1cm]}
\newcommand{\Aa}{\ensuremath{\os{\, \ast}{A}}}
\newcommand{\Ba}{\ensuremath{\os{\, \ast}{B}}}
\newcommand{\Fa}{\ensuremath{\os{\, \ast}{F}}}
\newcommand{\Aas}{\ensuremath{\Aa{}^s_{p,q}}}
\newcommand{\Bas}{\ensuremath{\Ba{}^s_{p,q}}}
\newcommand{\Fas}{\ensuremath{\Fa{}^s_{p,q}}}
\newcommand{\Ca}{\ensuremath{\os{\, \ast}{{\mathcal C}}}}
\newcommand{\Cas}{\ensuremath{\Ca{}^s}}
\newcommand{\Car}{\ensuremath{\Ca{}^r}}

\begin{center}
{\Large Quarkonial analysis}
\\[1cm]
{Hans Triebel}
\\[0.2cm]
Institut f\"{u}r Mathematik\\
Friedrich--Schiller--Universit\"{a}t Jena\\
07737 Jena, Germany
\\[0.1cm]
email: hans.triebel@uni-jena.de
\\[5cm]
\end{center}

\begin{abstract}
The spaces $\As (\rn)$ with $A \in \{B,F \}$, $s\in \real$ and $0<p,q \le \infty$ are usually introduced in terms of 
Fourier--analytical decompositions. Related characterizations based on atoms and wavelets are known nowadays in a rather final way. 
Quarks atomize the atoms into constructive building blocks. It is the main aim of these notes to raise quarkonial  decompositions to
the same level as related representations of the spaces $\As (\rn)$
in terms of atoms or wavelets. This will be complemented by some applications. 
In addition we deal also with quarks in domains and their relations to so--called refined localization spaces.
\\[2cm]
\end{abstract}

{\bfseries Keywords:} Function spaces, atoms, wavelets, quarks 

{\bfseries 2020 MSC:}  46E35

\newpage

\tableofcontents

\newpage

\section{Preliminaries}   \label{S1}
\subsection{Introduction}   \label{S1.1}
These notes may be considered as a complement to \cite{T20}, although we do not care very much about the spaces $F^s_{\infty,q}
(\rn)$, $0<q \le \infty$, treated there in detail. But we repeat  basic notation and related assertions in order to make this text to
some extent independently readable.

The classical Sobolev spaces $W^k_p (\rn)$, $k\in \nat$, $1<p<\infty$, and the classical Besov spaces $\Bs (\rn)$, $s>0$ and $1\le
p,q \le \infty$, including the H\"{o}lder--Zygmund spaces $\Cc^s (\rn) = B^s_{\infty,\infty} (\rn)$, $s>0$, as introduced  in the
1950s, are based on derivatives and differences in the context of distributions. But there are some forerunners and modifications
(especially for spaces on domains) going back to the 1930s. Details and (historical) references may be found in \cite[Sections 4.3.1 --
4.3.3, pp.\,132--135]{T20}.  Beginning with the 1960s this theory has been extended to the space
$\As (\rn)$, $A \in \{B,F \}$, $s\in \real$ and $0<p,q \le \infty$, now relying on Fourier--analytical decompositions and distinguished
building blocks:

\begin{itemize}
\item[1.] The Fourier--analytical definition of the spaces $\As (\rn)$, $A\in \{B,F \}$, $s\in \real$ and $0<p,q \le \infty$ ($p<\infty
$ for $F$--spaces) goes back to the decade between the mid 1960s and the mid 1970s. This had been complemented by corresponding definitions
for the spaces $F^s_{\infty,q} (\rn)$, $s\in \real$, $0<q<\infty$, around 1990. 
\item[2.] Atoms (and molecules) as building blocks in function spaces have a long history. The versions as used up to our time go back
to the early 1990s. 
\item[3.] Wavelets proved to be a powerful instrument in the theory of the above  function spaces. They originated around 1990. But the
final version for the full scale of the spaces $\As (\rn)$ as described below is more recent. 
\end{itemize}

Fourier--analytical decompositions (including the so--called
paramultiplication), atoms (and molecules), as well as wavelets as building blocks of the
spaces $\As (\rn)$ have their advantages and disadvantages. If one steps from $\As (\rn)$ to corresponding spaces $\As (\Om)$ on 
domains  $\Om$ in $\rn$ or to $\As (\Gamma)$ on rough sets $\Gamma$ including fractals $\Gamma$ on \rn, then Fourier--analytical
decompositions into holomorphic building blocks are not well adapted. Atoms are qualitatively defined. This is an advantage at many
occasions and will be used in what follows several times. But usually one cannot expect frame characterizations with (constructive)
optimal coefficients. Wavelets with compact supports are rather rigid and not universal (they are necessarily of limited smoothness). They
are quite often bases on related separable spaces $\As (\rn)$ and produce isomorphic maps onto suitable sequence spaces. But they are
not so well adapted for the study of corresponding spaces on domains and rough sets, including fractals, in \rn. We tried to combine 
the advantages of the above building blocks. Quite obviously, one must be ready to pay a prize. These attempts started in \cite{T97}.
There we decomposed harmonic and caloric  extensions of some spaces $\As(\rn)$ to $\real^{n+1}_+$. The resulting building blocks are
called {\em quarks}. We returned to this topic in \cite[Chapter 2]{T01} where we defined $\As (\rn)$ with $0<p,q \le \infty$ ($p<
\infty$ for $F$--spaces), and
\begin{\eq}   \label{1.1}
s > n \cdot
\begin{cases}
\max ( \frac{1}{p}, 1 ) -1 &\text{for $B$--spaces}, \\
\max (\frac{1}{p}, \frac{1}{q},1) -1 &\text{for $F$--spaces},
\end{cases}
\end{\eq}
in terms of quarkonial decompositions and proved afterwards that these spaces coincide with corresponding spaces introduced 
Fourier--analytically. But the offered extension of this theory to all spaces $\As (\rn)$ with $s\in \real$ and $0<p,q \le \infty$
($p<\infty$ for $F$--spaces) is not really satisfactory. In \cite[Section 3.2]{T06} we dealt with a refined and modified version of
this theory restricted to $B^s_p (\rn) = B^s_{p,p} (\rn)$ (sometimes even with $p>1$) as an instrument to study the local and pointwise
regularity of functions in a fractal setting. A description and a discussion of at least some of these assertions have been given in
\cite[Section 4.3.6, pp.\,138--142]{T20}. But now new (or overlooked) tools are available. They pave the way to raise
\begin{itemize}
\item[4.] quarkonial representations of the spaces $\As (\rn)$
\end{itemize}
to the same level as the above--described building blocks: Fourier--analytical decompositions, atomic representations 
 and wavelet expansions. This is 
the main topic of these notes. In addition to quarkonial representations for the spaces $\As (\rn)$ we deal also with corresponding
assertions for their weighted generalizations $\As (\rn,w)$, where $w$ is a so--called {\em admissible weight}. This
is not only done for its own sake but it paves the way
for the description of universal  expansions of elements
in $S(\rn)$ and $S'(\rn)$. Furthermore we deal with some applications offering
 new proofs of already known assertions about
pointwise multipliers, multiplication algebras and the so--called positivity property of some function spaces.    
This illustrates that the (now)
\begin{itemize}
\item[] 4 building blocks (Fourier--analytical decompositions, atoms, wavelets and quarks)
\end{itemize}
have their advantages and disadvantages. They complement in a decisive way each other and
the classical tools such as derivatives, differences, caloric expansions and interpolation to mention a few. 

Finally we deal with
quarkonial expansions of some function spaces on domains in \rn, complementing  again related atomic and wavelet representations.

\subsection{Definitions}   \label{S1.2}
We use standard notation. Let $\nat$ be the collection of all natural numbers and $\no = \nat \cup \{0 \}$. Let $\rn$ be Euclidean $n$-space, where $n \in \nat$. Put $\real = \real^1$, whereas $\comp$ is the complex plane. Let $S(\rn)$ be the Schwartz space of all complex-valued rapidly decreasing
infinitely differentiable functions on $\rn$ and let $S' (\rn)$ be the  dual space of all
tempered distributions on \rn. Let $L_p (\rn)$ with $0<p \le \infty$ be the standard complex quasi-Banach space with respect to the Lebesgue measure in \rn, quasi-normed by
\begin{\eq}   \label{1.2}
\| f \, | L_p (\rn) \| = \Big( \int_{\rn} |f(x)|^p \, \di x \Big)^{1/p}
\end{\eq}
with the natural modification if $p= \infty$. Similarly, $L_p (M)$ where $M$ is a Lebesgue--measurable subset of \rn.
As usual $\ganz$ is the collection of all integers. Let $\zn$ where $n\in \nat$ be the lattice of all points $m = (m_1, \ldots, m_n) \in \rn$ with $m_j \in \ganz$. Let $\nat^n_0 = \{ m\in \zn: \ m_j \in \no \}$. 
If $\vp \in S(\rn)$ then
\begin{\eq}   \label{1.3}
\wh{\vp} (\xi) = (F \vp)(\xi) = (2 \pi)^{-n/2} \int_{\rn} e^{-ix \xi} \, \vp (x) \, \di  x, \qquad \xi \in \rn,
\end{\eq}
denotes the Fourier transform of \vp. As usual, $F^{-1} \vp$ and $\vp^\vee$ stand for the inverse Fourier transform, which is given by the right-hand side of 
\eqref{1.3} with $i$ in place $-i$. Note that $x \xi = \sum^n_{j=1} x_j \xi_j$ 
is  the scalar product in \rn. Both $F$ and $F^{-1}$ are extended to $S' (\rn)$ in the standard way. Let $\vp_0 \in S(\rn)$ with
\begin{\eq}   \label{1.4}
\vp_0 (x) =1 \ \text{if $|x| \le 1$} \qquad \text{and} \qquad \vp_0 (x) =0 \ \text{if $|x| \ge 3/2$},
\end{\eq}
and let
\begin{\eq}   \label{1.5}
\vp_k (x) = \vp_0 \big( 2^{-k} x) - \vp_0 \big( 2^{-k+1} x \big), \qquad x \in \rn, \quad k \in \nat.
\end{\eq}
Since
\begin{\eq}    \label{1.6}
\sum^\infty_{j=0} \vp_j (x) = 1 \qquad \text{for} \quad x\in \rn,
\end{\eq}
the $\vp_j$ form a dyadic resolution of unity. The entire analytic functions $(\vp_j \wh{f} )^\vee (x)$ make sense pointwise in $\rn$ for any $f \in S' (\rn)$. Let
\begin{\eq}   \label{1.7}
Q_{J,M} = 2^{-J} M + 2^{-J} (0,1)^n, \qquad J \in \ganz, \quad M \in \zn,
\end{\eq}
be the usual dyadic cubes in \rn, $n\in \nat$, with sides of length $2^{-J}$ parallel to the axes of coordinates and with $2^{-J}M$
as lower left corner. If $Q$ is a cube in $\rn$ and $d>0$ then $dQ$ is the cube in $\rn$ concentric with $Q$ whose  side--length
is $d$ times the side--length of $Q$. Let $|\Om|$ be the Lebesgue measure of the Lebesgue measurable  set $\Om$ in \rn. Let $a^+ =
\max (a,0)$ for $a\in \real$.

\begin{definition}   \label{D1.1}
Let $\vp = \{ \vp_j \}^\infty_{j=0}$ be the above dyadic resolution  of unity.
\\[0.1cm]
{\upshape (i)} Let
\begin{\eq}   \label{1.8}
0<p \le \infty, \qquad 0<q \le \infty, \qquad s \in \real.
\end{\eq}
Then $\Bs (\rn)$ is the collection of all $f \in S' (\rn)$ such that
\begin{\eq}   \label{1.9}
\| f \, | \Bs (\rn) \|_{\vp} = \Big( \sum^\infty_{j=0} 2^{jsq} \big\| (\vp_j \widehat{f})^\vee \, | L_p (\rn) \big\|^q \Big)^{1/q} 
\end{\eq}
is finite $($with the usual modification if $q= \infty)$. 
\\[0.1cm]
{\upshape (ii)} Let
\begin{\eq}   \label{1.10}
0<p<\infty, \qquad 0<q \le \infty, \qquad s \in \real.
\end{\eq}
Then $F^s_{p,q} (\rn)$ is the collection of all $f \in S'(\rn)$ such that
\begin{\eq}   \label{1.11}
\| f \, | F^s_{p,q} (\rn) \|_{\vp} = \Big\| \Big( \sum^\infty_{j=0} 2^{jsq} \big| (\vp_j \wh{f})^\vee (\cdot) \big|^q \Big)^{1/q} \big| L_p (\rn) \Big\|
\end{\eq}
is finite $($with the usual modification if $q=\infty)$.
\\[0.1cm]
{\upshape (iii)} Let $0<q < \infty$ and $s\in \real$. Then $F^s_{\infty,q} (\rn)$ is the collection of all $f\in S'(\rn)$ such that
\begin{\eq}  \label{1.12}
\| f \, | F^s_{\infty,q} (\rn) \|_{\vp} = \sup_{J\in \ganz, M\in \zn} 2^{Jn/q} \Big(\int_{Q_{J,M}} \sum_{j \ge J^+} 2^{jsq} \big|
(\vp_j \wh{f} )^\vee (x) \big|^q \, \di x \Big)^{1/q}
\end{\eq}
is finite. Let $F^s_{\infty, \infty} (\rn) = B^s_{\infty, \infty} (\rn)$.
\end{definition}

\begin{remark}    \label{R1.2}
We take for granted that the reader is familiar with the theory of the spaces $\As (\rn)$, $A\in \{B,F \}$, $s\in \real$ and $0 <p,q
\le \infty$. In particular they are independent of the chosen resolution of unity $\vp$ (equivalent quasi--norms). This justifies our
omission of the subscript $\vp$ in \eqref{1.9}, \eqref{1.11} and \eqref{1.12}. Discussions and references may be found in 
\cite[Section 1.1.1, pp.\,1--5]{T20}. The spaces $F^s_{\infty, q} (\rn)$  will play only a marginal role (incorporated in related
assertions if no additional efforts are needed).  
On the other hand it will be 
occasionally useful to specify some properties to the following  special spaces. Let
\begin{\eq}   \label{1.13}
B^s_p (\rn) = B^s_{p,p} (\rn) = F^s_{p,p} (\rn), \quad 0<p \le \infty, \quad \text{and} \quad s\in \real,
\end{\eq}
including the {\em H\"{o}lder--Zygmund spaces}
\begin{\eq}   \label{1.14}
\Cc^s (\rn) = B^s_\infty (\rn) = B^s_{\infty,\infty} (\rn), \qquad s \in \real.
\end{\eq}
Furthermore,
\begin{\eq}   \label{1.15}
H^s_p (\rn) = F^s_{p,2} (\rn), \qquad 1<p<\infty \quad \text{and} \quad s\in \real
\end{\eq}
are the {\em Sobolev spaces}, including the {\em classical Sobolev spaces}
\begin{\eq}   \label{1.16}
W^k_p (\rn) = H^k_p (\rn), \qquad 1<p<\infty, \quad k\in \no,
\end{\eq}
which can be equivalently normed by
\begin{\eq}   \label{1.17}
\| f \, | W^k_p(\rn) \| = \sum_{|\alpha| \le k} \| D^\alpha f \, | L_p (\rn) \|, \qquad 1<p<\infty, \quad k\in \no,
\end{\eq}
where as usual
\begin{\eq}   \label{1.18}
D^\alpha = \prod^n_{j=1} \pa^{\alpha_j}_j, \qquad \alpha = (\alpha_1, \ldots, \alpha_n), \quad \alpha \in \nat^n_0, \quad |\alpha|=
\sum^n_{j=1} \alpha_j,
\end{\eq}
based on $\pa^m_j = \pa^m/ \pa x^m_j$, $m\in \no$ with $\pa^1_j = \pa_j$ and $\pa^0 f = f$.
\end{remark}

\subsection{Atoms}    \label{S1.3}
Next we describe the well--known representations of elements belonging to the above spaces $\As (\rn)$, adapted to our later needs. 
Let $\nat^n_0$, where $n\in \nat$, be again the set of all multi--indices,
\begin{\eq}    \label{1.19}
\alpha = (\alpha_1, \ldots, \alpha_n) \qquad \text{with $\alpha_j \in \no$ and $|\alpha| = \sum^n_{j=1} \alpha_j$},
\end{\eq}
underlying \eqref{1.18}, and
\begin{\eq}   \label{1.20}
x^\beta = \prod^n_{j=1} x^{\beta_j}_j \qquad \text{for $x= (x_1, \ldots, x_n) \in \rn$ and $\beta = (\beta_1, \ldots, \beta_n) \in 
\nat^n_0$}.
\end{\eq}
Let $Q_{j,m}$ be as in \eqref{1.7} and let $d\,Q_{j,m}$ be a cube concentric with $Q_{j,m}$ and with side--length $d\,2^{-J}$, $d>0$.

\begin{definition}   \label{D1.3}
Let $K \in \no$, $L\in \no$, $C>0$ and $d>1$. Then
\begin{\eq}   \label{1.21}
\{ a_{j,m}: \ j\in \no, \ m\in \zn \}
\end{\eq}
is called an atomic system if the atoms $a_{j,m}: \rn \mapsto \comp$ are $L_\infty$--normalized functions such that
\begin{\eq}   \label{1.22}
\supp a_{j,m} \subset d\,Q_{j,m}, \qquad j\in \no, \quad m\in \zn,
\end{\eq}
\begin{\eq}    \label{1.23}
\int_{\rn} x^\beta a_{j,m} (x) \ \di x =0, \qquad |\beta| <L, \quad j\in \nat, \quad m\in \zn,
\end{\eq}
and
\begin{\eq}   \label{1.24}
| D^\alpha a_{j,m}  (x) | \le C\,2^{j|\alpha|}, \qquad |\alpha| \le K, \quad j\in \no, \quad m\in \zn.
\end{\eq}
\end{definition}

\begin{remark}    \label{R1.4}
This is an adapted version of \cite[Definition 1.5, p.\,4]{T08} with a different normalization. No cancellation \eqref{1.23} for $a_{0,
m}$ is required. If $K=0$ then \eqref{1.24} means that $a_{j,m} \in L_\infty (\rn)$ and $|a_{j,m} (x)| \le C$. If $K\in \nat$ then
\eqref{1.24} means that $a_{j,m}$ has classical derivatives for all $\alpha \in \nat^n_0$ with $|\alpha| \le K$. If $L=0$ then 
\eqref{1.23} is empty (no cancellations are required). The condition $d>1$ ensures that for any fixed $j\in \no$ some controlled
over-laps of the atoms $a_{j,m}$, $m\in \zn$, are admitted. Usually one assumes $C=1$ in \eqref{1.24}. But the above modification
will be convenient  for us. For fixed $K\in \no$, $L\in \no$, $C>0$ and $d>1$ we call
\begin{\eq}    \label{1.25}
\{ a_{j,m}: \ j\in \no, \ m\in \zn \} \qquad \text{an $(K,L,C,d)$--atomic system}.
\end{\eq}
\end{remark}

Atomic representations 
\begin{\eq}   \label{1.26}  
f= \sum_{j\in \no} \sum_{m \in \zn}  \lambda_{j,m} \, a_{j,m}
\end{\eq}
of elements $f$ belonging to some spaces $\As (\rn)$ as introduced in Definition \ref{D1.1} rely on suitable atomic systems according 
to \eqref{1.25} and  the following sequence spaces for the coefficients $\lambda_{j,m} \in \comp$. Let $\chi_{j,m}$ be the 
characteristic function of $Q_{j,m}$ according to \eqref{1.7}. 

\begin{definition}    \label{D1.5}
Let $s\in \real$ and $0<p,q \le \infty$ $(p<\infty$ for $f$--spaces$)$. Then $b^s_{p,q} (\rn)$ is the collection of all sequences
\begin{\eq}   \label{1.27}
\lambda = \{ \lambda_{j,m} \in \comp: \ j \in \no, \ m \in \zn \}
\end{\eq}
such that
\begin{\eq}   \label{1.28}
\| \lambda \, | b^s_{p,q} (\rn) \| =  \bigg( \sum^\infty_{j=0} 2^{j(s- \frac{n}{p})q} \Big( \sum_{m\in \zn}
|\lambda_{j,m} |^p \Big)^{q/p} \bigg)^{1/q}
\end{\eq}
is finite and $f^s_{p,q} (\rn)$ is the collection of all sequences according to \eqref{1.27} such that
\begin{\eq}   \label{1.29}
\| \lambda \, | f^s_{p,q} (\rn) \| = 
\bigg\| \Big( \sum^\infty_{j=0} \sum_{m \in \zn} 2^{jsq} \big| 
\lambda_{j,m} \, \chi_{j,m} (\cdot) \big|^q \Big)^{1/q} \, | L_p (\rn) \bigg\|
\end{\eq}
is finite $($with the usual modifications if $\max (p,q) = \infty.)$
\end{definition}

\begin{remark}   \label{R1.6}
Quite obviously, $a^s_{p,q}(\rn)$ with $a\in \{b,f \}$ are quasi--Banach spaces (Banach spaces if $p \ge 1$, $q \ge 1$). Furthermore
$b^s_{p,p} (\rn) = f^s_{p,p} (\rn)$, $p<\infty$.
\end{remark}

Let
\begin{\eq}   \label{1.30}
\sigma^n_p = n \Big( \max \big( \frac{1}{p}, 1 \big) -1 \Big), \qquad \sigma^n_{p,q} = n \Big( \max \big( \frac{1}{p}, \frac{1}{q}, 1
\big) - 1 \Big),
\end{\eq}
where $n\in \nat$ and $0<p,q \le \infty$. These are the right--hand sides of \eqref{1.1}.

\begin{theorem}   \label{T1.7}
Let $n\in \nat$.
\cm
{\em (i)} Let $0<p \le \infty$, $0<q \le \infty$, $s\in \real$. Let $K\in \no$, $L\in \no$, $C\in \real$, $d\in \real$ with
\begin{\eq}   \label{1.31}
K > s, \qquad L > \sigma^n_p -s,
\end{\eq}
$d>1$ and $C>0$ be fixed. Then $f \in S'(\rn)$ belongs to $\Bs (\rn)$ if, and only if, it can be represented as
\begin{\eq}   \label{1.32}
f = \sum^\infty_{j=0} \sum_{m\in \zn}  \lambda_{j,m} a_{j,m}
\end{\eq}
where $\{a_{j,m} \}$ is an $(K,L,C,d)$--atomic system according to \eqref{1.25} and $\lambda = \{\lambda_{j,m} \} \in b^s_{p,q} (\rn)$.
Furthermore, 
\begin{\eq} \label{1.33}
\| f \, | \Bs (\rn) \| \sim \inf \|\lambda \, | b^s_{p,q} (\rn) \|
\end{\eq}
are equivalent quasi--norms where the infimum is taken over all admissible representations \eqref{1.32}.
\cm
{\em (ii)} Let $0<p < \infty$, $0<q \le \infty$, $s\in \real$. Let $K\in \no$, $L\in \no$, $C\in \real$, $d\in \real$ with
\begin{\eq}   \label{1.34}
K > s, \qquad L > \sigma^n_{p,q} -s,
\end{\eq}
$d>1$ and $C>0$ be fixed. Then $f \in S'(\rn)$ belongs to $\Fs (\rn)$ if, and only if, it can be represented by \eqref{1.32} where $\{
a_{j,m} \}$ is an $(K,L,C,d)$--atomic system and $\lambda = \{\lambda_{j,m} \} \in f^s_{p,q} (\rn)$. Furthermore
\begin{\eq} \label{1.35}
\| f \, | \Fs (\rn) \| \sim \inf \|\lambda \, | f^s_{p,q} (\rn) \|
\end{\eq}
are equivalent quasi--norms where the infimum is taken over all admissible representations \eqref{1.32}.
\end{theorem}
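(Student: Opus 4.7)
The plan is to prove each implication separately using the standard atomic machinery: a pointwise atomic estimate for the band-limited convolutions $(\vp_{j'} \wh{a_{j,m}})^\vee$, obtained by Taylor expansion of the kernel $(\vp_{j'})^\vee$ against either the smoothness \eqref{1.24} or the vanishing moments \eqref{1.23} of the atoms; the vector-valued Fefferman--Stein maximal inequality (for $F$-spaces) and its scalar Hardy--Littlewood counterpart (for $B$-spaces); and the Peetre maximal function characterization of $\As (\rn)$. The $B$- and $F$-cases run in parallel, differing only in the order of summation and integration.

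For the ``if'' direction, I assume \eqref{1.32} and estimate the Littlewood--Paley pieces $(\vp_{j'} \wh{f})^\vee$. The key pointwise bound is
\[
\big| (\vp_{j'} \wh{a_{j,m}})^\vee (x) \big| \le c\, 2^{-|j - j'| N} \big( 1 + 2^{\min (j,j')} |x - 2^{-j} m| \big)^{-M}
\]
for arbitrarily large $M$, where $N$ can be made as large as $\min (K - s,\, L + s - \sigma^n_p)$ in the $B$-case, respectively $\min (K - s,\, L + s - \sigma^n_{p,q})$ in the $F$-case. The smoothness \eqref{1.24} handles the case $j' \ge j$ and the moment conditions \eqref{1.23} handle the case $j' < j$. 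Multiplying by $2^{j's}$, summing over $j$ via the geometric decay in $|j - j'|$, and then applying Fefferman--Stein (for $F$) or Minkowski level-by-level (for $B$) converts control of $\| \lambda \, | \, a^s_{p,q} (\rn) \|$ into control of $\| f \, | \, \As (\rn) \|$. The conditions \eqref{1.31} and \eqref{1.34} are precisely those ensuring $N > 0$ and the convergence of the geometric sum.

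For the ``only if'' direction, I decompose $f = \sum_{j=0}^\infty (\vp_j \wh{f})^\vee$ and localize each band-limited piece to dyadic cubes via a smooth compactly supported resolution of unity $\{ \psi_{j,m} \}$ subordinate to $\{ d\,Q_{j,m} \}_m$. The candidate atoms are renormalizations of $\psi_{j,m}\, (\vp_j \wh{f})^\vee$, the corresponding coefficients being essentially
\[
\lambda_{j,m} = c\, \sup_{x \in d\,Q_{j,m}} \big| (\vp_j \wh{f})^\vee (x) \big|,
\]
chosen so that \eqref{1.24} holds with the prescribed constant $C$. Since $(\vp_j \wh{f})^\vee$ is entire of exponential type of order $2^j$, Bernstein's inequality combined with the product rule yields the $K$-smoothness of the $a_{j,m}$ automatically, while the Peetre maximal function characterization of $\As (\rn)$ converts the sequence norm of $\lambda$ to $\| f \, | \, \As (\rn) \|$.

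The main obstacle is the cancellation condition \eqref{1.23}: the localized pieces $\psi_{j,m}\,(\vp_j \wh{f})^\vee$ have no reason to possess vanishing moments. The standard remedies are either to subtract a polynomial of degree less than $L$ from each localized piece and redistribute the correction across neighbouring cubes, or, more cleanly, to replace the plain smooth partition of unity by a Frazier--Jawerth type $\vp$-transform whose kernel already carries $L$ vanishing moments. Either way, the hypotheses $L > \sigma^n_p - s$ and $L > \sigma^n_{p,q} - s$ are exactly what is needed to absorb the correction without inflating the sequence norm. A secondary but routine point is the convergence of \eqref{1.32} in $S'(\rn)$ for negative $s$ or small $p, q$, which follows from the standard embedding of $\As (\rn)$ into a suitable weighted $L_\infty$-type distribution space.
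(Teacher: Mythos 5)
The paper does not prove Theorem~\ref{T1.7}: in Remark~\ref{R1.8} it is explicitly cited as essentially \cite[Theorem 1.7]{T08} with a reference to \cite[Section 1.5.1]{T06}, so there is no ``paper's own proof'' to compare against. Your sketch follows the standard Frazier--Jawerth / Triebel machinery that those references use, and the overall architecture (pointwise estimate of $(\vp_{j'}\wh{a_{j,m}})^\vee$ via $K$-smoothness for $j'\ge j$ and $L$-moments for $j'<j$, then Fefferman--Stein / Hardy--Littlewood on the sequence side, then the Peetre maximal function characterization) is the right one. A small imprecision: the decay exponent $N$ in your key pointwise estimate depends only on $K$ (for $j'\ge j$) and on $L+n$ (for $j'<j$), not on $s$ or $p$; the thresholds $K>s$ and $L>\sigma^n_p-s$ (resp. $\sigma^n_{p,q}-s$) arise afterwards, when you multiply by $2^{(j'-j)s}$ and sum, so writing $N\le\min(K-s,\,L+s-\sigma^n_p)$ conflates the raw decay with the final summability condition.

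The genuine gap is in the ``only if'' direction, and it is exactly the point you flag. Your remedy~(2), passing to a Frazier--Jawerth $\vp$-transform, gives bandlimited building blocks with vanishing moments but \emph{no compact support}; it trades the moment problem for a support problem, so it does not by itself produce the $(K,L,C,d)$-atoms of Definition~\ref{D1.3}. Your remedy~(1), subtracting and redistributing polynomials, is the delicate one and you give no argument for why the corrections stay controlled. The device actually used (in \cite{T06}, \cite{FrJ90}, and indeed in the present paper's own proof of Theorem~\ref{T2.4}, Step~3, equations \eqref{2.36}--\eqref{2.42}) is different and cleaner: since $\wh{f}\vp_j$ vanishes near the origin for $j\ge 1$, one writes $(\vp_j\wh{f})^\vee = (-\Delta)^L g_j$ with $g_j = F\big(|\xi|^{-2L}\vp_j\wh{f}\big)$ still smooth and bandlimited, \emph{then} localizes $g_j$ by a compactly supported partition of unity $\{\psi_{j,m}\}$, and sets $a_{j,m}\propto(-\Delta)^L(\psi_{j,m}\,g_j)$. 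Each such atom is automatically a $2L$-th order derivative of a compactly supported function and hence has vanishing moments of order $<2L$, while compact support and the $K$-smoothness estimate come for free from $\psi_{j,m}$ and Bernstein. Without this step your synthesis does not yield atoms satisfying \eqref{1.23}, so as it stands the ``only if'' half is not closed.
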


\begin{remark}   \label{R1.8}
This coincides essentially with \cite[Theorem 1.7, p.\,5]{T08} with a reference to \cite[Section 1.5.1, p.\,12--15]{T06}, based on a
different, but immaterial, normalization of the underlying atoms $a_{j,m}$ and the related sequence spaces $a^s_{p,q} (\rn)$, $a\in 
\{b,f \}$. This ensures that the series in \eqref{1.32} converges in any case unconditionally in $S'(\rn)$. As indicated in in 
\cite[p.\,138]{T20} a corresponding representation  for the spaces $F^s_{\infty,q} (\rn)$, $0<q<\infty$, can be based on the 
identification of these spaces according to \cite[(1.71), p.\,13]{T20} with suitable hybrid spaces and the corresponding atomic
representations as described in \cite[Theorem 3.33, p.\,67]{T14}. But this will not be used in the sequel.
\end{remark}

\begin{remark}   \label{R1.9}
It is well known that one can relax \eqref{1.22} by the decay
\begin{\eq}   \label{1.36}
|b_{j,m} (x)| \le C \,\big( 1+ |2^j x-m | \big)^{-N}, \qquad x\in \rn, \quad j\in \no, \quad m\in \zn,
\end{\eq}
for related {\em molecules}. In particular the above theorem remains valid for the spaces $\As (\rn)$ with $s<0$ if one replaces there
the atoms $a_{j,m} \in L_\infty (\rn)$ by the molecules $b_{j,m} \in L_\infty (\rn)$, satisfying
\begin{\eq}  \label{1.37}
\int_{\rn} x^\beta \, b_{j,m} (x) \, \di x =0, \qquad |\beta| <L, \quad j \in \nat, \quad m \in \zn,
\end{\eq}
with
\begin{\eq}   \label{1.38}
 L>
\begin{cases}
\sigma^{n}_p -s &\text{if $A=B$}, \\
\sigma^{n}_{p,q} -s &\text{if $A=F$},
\end{cases}
\end{\eq}
as in \eqref{1.23} and \eqref{1.31}, \eqref{1.34} and $N > L+n$ in \eqref{1.36}. We refer the reader to \cite[pp.\,17, 24]{Tor91} and
the respective assertions in in \cite{FrJ90} where one finds molecular representations for all $s\in \real$. We described in
\cite[p.\,85]{T13} how corresponding assertions for the spaces $\As (\rn)$ with $s \ge 0$ look like. But this will not be needed in
the sequel.
\end{remark}

\subsection{Wavelets}   \label{S1.4}
As indicated in the Introduction, Section \ref{S1.1}, it is our main aim  to raise quarkonial representations in the function spaces 
$\As (\rn)$ to the same level as Fourier--analytical decompositions, atomic representations and wavelet expansions. This will be mainly
based on Fourier--analytical and atomic arguments. Wavelets will play no decisive role. But by our intention it is quite clear that a
detailed description of wavelet expansions is desirable and to discuss what are the advantages and disadvantages of the diverse 
building blocks. We follow essentially \cite[Section 1.2.1, p.\,7--10]{T20} which in turn is based on \cite[Section 3.2.3, 
pp.\,51--54]{T14}. There one finds further explanations and, in particular, corresponding references.

As usual, $C^{u} (\real)$ with $u\in
\nat$ collects all bounded complex-valued continuous functions on $\real$ having continuous bounded derivatives up to order $u$ inclusively. Let
\begin{\eq}   \label{1.39}
\psi_F \in C^{u} (\real), \qquad \psi_M \in C^{u} (\real), \qquad u \in \nat,
\end{\eq}
be {\em real} compactly supported Daubechies wavelets with
\begin{\eq}   \label{1.40}
\int_{\real} \psi_M (x) \, x^v \, \di x =0 \qquad \text{for all $v\in \no$ with $v<u$.}
\end{\eq}
Recall that $\psi_F$ is called the {\em scaling function} (father wavelet) and $\psi_M$ the associated {\em wavelet} 
(mother wavelet). We extend these wavelets from $\real$ to $\rn$ by the usual multiresolution procedure. Let $n\in \nat$ and let
\begin{\eq}   \label{1.41}
G = (G_1, \ldots, G_n) \in G^0 = \{F,M \}^n
\end{\eq}
which means that $G_r$ is either $F$ or $M$. Furthermore let
\begin{\eq}   \label{1.42}
G= (G_1, \ldots, G_n) \in G^* = G^j = \{F, M \}^{n*}, \qquad j \in \nat,
\end{\eq}
which means that $G_r$ is either $F$ or $M$, where $*$ indicates that at least one of the components of $G$ must be an $M$. Hence $G^0$ has $2^n$ elements, whereas $G^j$ with $j\in \nat$ and $G^*$ have $2^n -1$ elements. Let
\begin{\eq}   \label{1.43}
\psi^j_{G,m} (x) = \prod^n_{l=1} \psi_{G_l} \big(2^j x_l -m_l \big), \qquad G\in G^j, \quad m \in \zn, \quad x\in \rn,
\end{\eq}
where (now) $j \in \no$. We always assume that $\psi_F$ and $\psi_M$ in \eqref{1.39} have $L_2$--norm 1. Then 
\begin{\eq}   \label{1.44}
 \big\{ 2^{jn/2} \psi^j_{G,m}: \ j \in \no, \ G\in G^j, \ m \in \zn \big\}
\end{\eq}
is an {\em orthonormal basis} in $L_2 (\rn)$ (for any $u\in \nat$) and
\begin{\eq}   \label{1.45}
f = \sum^\infty_{j=0} \sum_{G \in G^j} \sum_{m \in \zn} \lambda^{j,G}_m \, \psi^j_{G,m}
\end{\eq}
with
\begin{\eq}   \label{1.46}
\lambda^{j,G}_m = \lambda^{j,G}_m (f) = 2^{jn} \int_{\rn} f(x) \, \psi^j_{G,m} (x) \, \di x = 2^{jn} \big(f, \psi^j_{G,m} \big)
\end{\eq}
is the corresponding expansion. Let again $\chi_{j,m}$ be the characteristic function of the cube $Q_{j,m} = 2^{-j} m + 2^{-j} (0,1)^n$
where $j \in \no$ and $m\in \zn$. We adapt the sequence spaces $a^s_{p,q} (\rn)$, $a \in \{b,f \}$, as introduced in Definition
\ref{D1.5} to the above wavelets. This requires an additional summation over $G\in G^j$.

\begin{definition}   \label{D1.10}
Let $s\in \real$ and $0<p,q \le \infty$ $(p<\infty$ for $f$--spaces$)$. Then $b^s_{p,q} (\rn)_W$ is the collection of all sequences
\begin{\eq}   \label{1.47}
\lambda = \big\{ \lambda^{j,G}_m \in \comp: \ j \in \no, \ G \in G^j, \ m \in \zn \big\}
\end{\eq}
such that
\begin{\eq}   \label{1.48}
\| \lambda \, | b^s_{p,q} (\rn)_W \| = \bigg( \sum^\infty_{j=0} 2^{j(s- \frac{n}{p})q} \sum_{G \in G^j} \Big( \sum_{m \in \zn} |\lambda^{j,G}_m|^p \Big)^{q/p} \bigg)^{1/q}
\end{\eq}
is finite and $f^s_{p,q} (\rn)_W$ is the collection of all sequences according to \eqref{1.47} such that
\begin{\eq}   \label{1.49}
\| \lambda \, | f^s_{p,q} (\rn)_W \| = \Big\| \Big( \sum_{\substack{j\in \no, G\in G^j,\\ m\in \zn}}
2^{jsq} \big| \lambda^{j,G}_m \, \chi_{j,m} (\cdot) \big|^q \Big)^{1/q} \big| L_p (\rn)
\Big\|
\end{\eq}
is finite $($with the usual modifications if $\max(p,q) =\infty)$.
\end{definition}

\begin{remark}   \label{R1.11}
Quite obviously, $a^s_{p,q} (\rn)_W$ with $a\in \{b,f \}$ are quasi--Banach spaces (Banach spaces if $p \ge 1$, $q\ge 1$).
\end{remark}

Let again $\sigma^n_p$ and $\sigma^n_{p,q}$ be as in \eqref{1.30}.

\begin{theorem}   \label{T1.12}
{\upshape (i)} Let $0<p \le \infty$, $0<q \le \infty$, $s\in \real$ and
\begin{\eq}   \label{1.50}
u > \max (s, \sigma^{n}_p -s).
\end{\eq}
Let $f \in S' (\rn)$. Then $f \in \Bs (\rn)$ if, and only if, it can be represented as
\begin{\eq}   \label{1.51}
f= \sum_{\substack{j\in \no, G\in G^j, \\ m\in \zn}}
 \lambda^{j,G}_m \, \psi^j_{G,m}, \qquad \lambda \in b^s_{p,q} (\rn)_W,
\end{\eq}
the unconditional convergence being in $S' (\rn)$. The representation \eqref{1.51} is unique,
\begin{\eq}  \label{1.52}
\lambda^{j,G}_m = \lambda^{j,G}_m (f) =  2^{jn} \big( f, \psi^j_{G,m} \big)
\end{\eq}
and
\begin{\eq}   \label{1.53}
I: \quad f \mapsto \big\{ \lambda^{j,G}_m (f) \big\}
\end{\eq}
is an isomorphic map of $\Bs (\rn)$ onto $b^s_{p,q} (\rn)_W$.
\\[0.1cm]
{\upshape (ii)} Let $0<p < \infty$, $0<q\le \infty$, $s\in \real$ and
\begin{\eq}   \label{1.54}
u > \max (s, \sigma^{n}_{p,q} -s).
\end{\eq}
Let $f \in S' (\rn)$. Then $f \in \Fs (\rn)$ if, and only if, it can be represented as
\begin{\eq}   \label{1.55}
f = \sum_{\substack{j\in \no, G\in G^j, \\ m\in \zn}} \lambda^{j,G}_m \, \psi^j_{G,m}, \qquad \lambda \in f^s_{p,q} (\rn)_W,
\end{\eq}
the unconditional convergence being in $S' (\rn)$. The representation \eqref{1.55}
 is unique with \eqref{1.52}. Furthermore $I$ in \eqref{1.53} is an isomorphic map of
$\Fs (\rn)$ onto $f^s_{p,q} (\rn)_W$.
\end{theorem}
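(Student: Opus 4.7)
The plan is to reduce both directions to the atomic representation Theorem \ref{T1.7}, handling the additional index $G \in G^j$ by a finite-sum argument at each dyadic level.

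For the sufficiency direction I would first observe that for $j \ge 1$ and $G \in G^*$, after rescaling by a constant depending only on $\|\psi_F \, | C^u(\real)\|$, $\|\psi_M \, | C^u(\real)\|$ and the common diameter of the wavelet supports, each $\psi^j_{G,m}$ is an $(u,u,C,d)$-atom in the sense of Definition \ref{D1.3}: it is supported in $d\,Q_{j,m}$ for a fixed $d>1$, its derivatives up to order $u$ obey $|D^\alpha \psi^j_{G,m}(x)| \le C\,2^{j|\alpha|}$ by tensorization, and since at least one component $G_l = M$ it inherits $u$ vanishing moments from \eqref{1.40}. At the coarse level $j=0$ no cancellations are required by Theorem \ref{T1.7}. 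Splitting \eqref{1.51} (resp.\ \eqref{1.55}) into the $2^n$ (resp.\ $2^n-1$) pieces corresponding to fixed $G$ reduces the claim to $2^n$ atomic expansions, and the hypothesis $u > \max(s, \sigma^n_p - s)$ (resp.\ $u > \max(s, \sigma^n_{p,q} - s)$) supplies both $K > s$ and $L$ exceeding the required threshold. Theorem \ref{T1.7} therefore yields $f \in \As(\rn)$ with $\|f \, | \As(\rn)\| \lesssim \|\lambda \, | a^s_{p,q}(\rn)_W\|$, the convergence being unconditional in $S'(\rn)$.

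For the necessity direction I would interpret $\lambda^{j,G}_m(f) = 2^{jn}(f,\psi^j_{G,m})$ as a duality pairing. The smoothness $u > s$ and the compact support of $\psi^j_{G,m}$ place it in a suitable test class for $\As(\rn)$, so the pairing is well defined. For $j \ge 1$ and $G \in G^*$ the wavelet plays the role of a local means kernel with $u$ vanishing moments; the standard local means argument then gives a pointwise bound of $|\lambda^{j,G}_m(f)|$, on $Q_{j,m}$, by the Peetre maximal function of $(\vp_k \wh f)^\vee$ at scale comparable to $2^{-j}$. Summing this bound in the $b^s_{p,q}$ norm is immediate in the $B$-case; in the $F$-case the Fefferman--Stein vector-valued maximal inequality converts the Peetre bound into the $f^s_{p,q}(\rn)_W$-quasi-norm, and the additional decay/vanishing-moment margin guaranteed by $u > \sigma^n_{p,q} - s$ is precisely what is needed to stay inside the range of that inequality. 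This yields $\|\lambda(f) \, | a^s_{p,q}(\rn)_W\| \lesssim \|f \, | \As(\rn)\|$.

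Uniqueness follows because if $\sum \mu^{j,G}_m \psi^j_{G,m} = 0$ in $S'(\rn)$, the sufficiency direction shows the series already lies in some $\As(\rn)$ hence in a large Besov space embedding into a dual of the wavelets, and the $L_2$-orthonormality of the Daubechies system forces every $\mu^{j,G}_m = 0$; combined with \eqref{1.52} this shows $I$ is the claimed isomorphism. The main obstacle I expect is the local-means estimate in the $F$-case for the full parameter range, in particular small $p,q$ where $\sigma^n_{p,q} > 0$ and the negative-smoothness regime where one cannot avoid the Peetre maximal function; carefully verifying the admissibility of the kernel $\psi^j_{G,m}$ in the vector-valued maximal theorem, and checking that the number of vanishing moments and the smoothness order are simultaneously sufficient, is where the proof needs the most care. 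Everything else is bookkeeping once Theorem \ref{T1.7} is invoked and the finite sum over $G$ is absorbed.
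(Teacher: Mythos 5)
The paper gives no proof of Theorem \ref{T1.12}: Remark \ref{R1.13} simply refers to \cite[Proposition 1.11]{T20} and \cite[Section 3.2.3]{T14} for the details. Your reduction to Theorem \ref{T1.7} (each $\psi^j_{G,m}$ with $j\ge 1$, $G\in G^*$ is a $(u,u,C,d)$-atom via tensorization and \eqref{1.40}, with the finite sum over $G$ absorbed by the $u$-triangle inequality) for sufficiency, to local-means estimates via Peetre maximal functions and Fefferman--Stein for necessity, and to biorthogonality through a suitable duality pairing for uniqueness is precisely the standard argument developed in those references, and your sketch is correct in substance.
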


\begin{remark}    \label{R1.13}
These assertions are covered by \cite[Proposition 1.11, pp.\,9--10]{T20} where we dealt also with the spaces $F^s_{\infty,q} (\rn)$,
$0<q<\infty$ (not needed here). Otherwise we refer the reader to \cite[Section 3.2.3, pp.\,51--54]{T14} where one finds further
explanations and references. As already mentioned, wavelet expansions for elements belonging to $\As (\rn)$ will play only a marginal
role in what follows. We included the above material mainly for sake of completeness in the presentation of the most distinguished 
building blocks in the recent theory of function spaces.
\end{remark}

\subsection{Building blocks}   \label{S1.5}
So far we dealt in Section \ref{S1.3} with atomic representations  and in Section \ref{S1.4} with wavelet expansions  in the spaces 
$\As (\rn)$ as introduced in Definition \ref{D1.1} in terms of Fourier--analytical decompositions (mostly with $p<\infty$ for 
$F$--spaces). It is quite clear that the qualitative {\em atoms} and the constructive  rigid {\em wavelets} are rather different. As
already indicated in the Introduction, Section \ref{S1.1}, it is the main aim of these notes to complement these two types of
distinguished {\em building blocks} by a third type of building blocks, called {\em quarks}. This will be done in the following 
sections. The underlying constructions justify to refer to atoms, wavelets and quarks as {\em building blocks} from which all the
spaces $\As (\rn)$ are made from. This suggests to ask for a reformulation of Definition \ref{D1.1} in terms of {\em 
Fourier--analytical building blocks}. This has already been done in \cite[Section 2.5.2, pp.\,79--80]{T83} with the following outcome.

Let $s\in \real$ and $0<p,q \le \infty$. Then $\ell^s_q \big( L_p (\rn) \big)$ is the quasi--Banach space of all sequences $b= \{b_j
(x) \}^\infty_{j=0} \subset L_p (\rn)$ such that
\begin{\eq}   \label{1.56}
\big\| b \, | \ell^s_q \big( L_p (\rn) \big) \big\| = \Big( \sum^\infty_{j=0} 2^{jsq} \| b_j \, | L_p (\rn) \|^q \Big)^{1/q}
\end{\eq}
is finite and $L_p \big( \rn, \ell^s_q \big)$ is the quasi--Banach space of all sequences $b= \{b_j (x) \}^\infty_{j=0} \subset L_p (\rn)$ such that
\begin{\eq}   \label{1.57}
\| b \, | L_p \big( \rn, \ell^s_q \big) \| = \Big\| \Big( \sum^\infty_{j=0} 2^{jsq} |b_j (\cdot)|^q \Big)^{1/q} | L_p (\rn) \Big\|
\end{\eq}
is finite (with the usual modification if $q=\infty$). Let $\mathfrak{B}_p (\rn)$, $0<p \le \infty$ be the collection of all
\begin{\eq}   \label{1.58}
b = \{b_j \}^\infty_{j=0}, \qquad b_j \in S' (\rn) \cap L_p (\rn)
\end{\eq}
such that
\begin{\eq}   \label{1.59}
\supp b_j \subset 
\begin{cases}
\{y\in \rn: \ |y| \le 2 \} &\text{if $j=0$}, \\
\{ y \in \rn: \ 2^{j-1} \le |y| \le 2^{j+1} \} &\text{if $j\in \nat$}.
\end{cases}
\end{\eq}

\begin{proposition}    \label{P1.14}
Let $n\in \nat$.
\cm
{\em (i)} Let $0<p \le \infty$, $0<q \le \infty$, $s\in \real$. Then $f\in S' (\rn)$ belongs to $\Bs (\rn)$ if, and only if, it can be
represented as
\begin{\eq}   \label{1.60}
f = \sum^\infty_{j=0} b_j, \qquad b = \{b_j \}^\infty_{j=0} \in \ell^s_q \big( L_p (\rn) \big) \cap \mathfrak{B}_p (\rn),
\end{\eq}
the unconditional convergence being in $S'(\rn)$. Furthermore,
\begin{\eq}   \label{1.61}
\| f \, | \Bs (\rn) \| \sim \inf \big\| b \, | \ell^s_q \big( L_p (\rn) \big) \big\|
\end{\eq}
where the infimum is taken over all representations \eqref{1.60}.
\cm
{\em (ii)} Let $0<p<\infty$, $0<q  \le \infty$, $s\in \real$. Then $f\in S'(\rn)$ belongs to $\Fs (\rn)$ if, and only if, it can be
represented as
\begin{\eq}   \label{1.62}
f = \sum^\infty_{j=0} b_j, \qquad b = \{b_j \}^\infty_{j=0} \in L_p (\rn, \ell^s_q) \cap \mathfrak{B}_p (\rn),
\end{\eq}
the unconditional convergence being in $S'(\rn)$. Furthermore,
\begin{\eq}   \label{1.63}
\| f \, | \Fs (\rn) \| \sim \inf \big\| b \, | L_p (\rn, \ell^s_q ) \big\|
\end{\eq}
where the infimum is taken over all representations \eqref{1.62}.
\end{proposition}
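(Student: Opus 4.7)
The proof splits naturally into two directions: that every $f \in \As(\rn)$ admits such a representation with control on the norm (the ``$\gtrsim$'' half), and that any such representation produces an element of $\As(\rn)$ with the corresponding upper bound (the ``$\lesssim$'' half). I would treat both cases $A \in \{B,F\}$ in parallel, since the structural arguments are identical.

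For the first direction I would simply take the canonical Littlewood--Paley pieces $b_j = (\vp_j \wh{f})^\vee$ with $\vp = \{\vp_j\}$ the resolution of unity fixed in Definition~\ref{D1.1}. From \eqref{1.4}--\eqref{1.5} one checks directly that $\supp \vp_0 \subset \{|\xi| \le 3/2\}$ and $\supp \vp_j \subset \{2^{j-1} \le |\xi| \le 3 \cdot 2^{j-1}\} \subset \{2^{j-1} \le |\xi| \le 2^{j+1}\}$ for $j \ge 1$, so $b = \{b_j\} \in \mathfrak{B}_p (\rn)$. By \eqref{1.6} one has $\sum_j \vp_j \wh{f} = \wh{f}$ in $S'(\rn)$, hence $f = \sum_j b_j$ unconditionally in $S'(\rn)$. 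The definitions \eqref{1.9} and \eqref{1.11} directly yield
\[
\|b\, | \ell^s_q(L_p(\rn))\| = \|f\, | \Bs(\rn)\|_\vp, \qquad \|b\, | L_p(\rn,\ell^s_q)\| = \|f\, | \Fs(\rn)\|_\vp,
\]
which already gives the $\gtrsim$ direction of \eqref{1.61} and \eqref{1.63}.

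For the converse direction, suppose $f = \sum_j b_j$ with $b \in \mathfrak{B}_p(\rn)$ and finite norm on the appropriate sequence space. I would fix the resolution of unity $\vp$ from Definition~\ref{D1.1} and compute $(\vp_k \wh{f})^\vee = \sum_j (\vp_k \wh{b_j})^\vee$. The spectral support condition \eqref{1.59} combined with the support of $\vp_k$ forces the annuli to be disjoint unless $|j-k|$ is bounded by some absolute constant $C_0$ (at most $2$), so the inner sum is finite with at most $2C_0+1$ terms. For each surviving term I would apply the standard Fourier multiplier estimate
\[
\|(\vp_k \wh{b_j})^\vee \, | L_p(\rn)\| \le c\,\|b_j \, | L_p(\rn)\|, \qquad |j-k|\le C_0,
\]
with $c$ independent of $j,k$. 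For $p \ge 1$ this is immediate from Young's inequality and the uniform $L_1$-bound on the rescaled kernels $\vp_k^\vee$; for $0<p<1$ one invokes the Plancherel--Polya--Nikolskii type inequality for functions with compactly supported Fourier transform, together with the Peetre maximal function. Summation in $k$, using Minkowski's inequality and the finite shift index $|j-k|\le C_0$ (in the $B$ case) or the Fefferman--Stein vector-valued maximal inequality (in the $F$ case), yields
\[
\|f\, | \Bs(\rn)\|_\vp \lesssim \|b\, | \ell^s_q(L_p(\rn))\|, \qquad \|f\, | \Fs(\rn)\|_\vp \lesssim \|b\, | L_p(\rn,\ell^s_q)\|.
\]

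The main technical obstacle is the multiplier estimate in the quasi-Banach regime $p<1$ (and the analogous $F$-space issue when $\min(p,q)<1$): convolution with an $L_1$ kernel no longer preserves $L_p$, and one must exploit the spectral localization of $b_j$ to bound the convolution pointwise by a Peetre-type maximal function and then transfer to $L_p$ via Fefferman--Stein. The remaining work is bookkeeping on the geometric factor $2^{(k-j)s}$, which is harmless because $|k-j|$ is bounded; unconditional $S'$-convergence on the sufficiency side follows from the spectral support together with the summability hypothesis by testing against Schwartz functions and exploiting the rapid decay of $\wh{\phi}$ on the dyadic annuli.
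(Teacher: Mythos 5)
Your proof matches exactly what the paper indicates: Remark~\ref{R1.15} observes that $b_j = (\vp_j\wh f)^\vee$ is the optimal choice and that the sufficiency half ``is essentially a Fourier multiplier theorem'', which is precisely the two-step argument you carry out (canonical Littlewood--Paley pieces for necessity, overlap of dyadic annuli plus the multiplier estimate for entire analytic functions of exponential type for sufficiency). Your treatment of the quasi-Banach range via Plancherel--Polya--Nikolskii and Fefferman--Stein is the correct way to fill in the references to \cite[p.\,79]{T83}.
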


\begin{remark}    \label{R1.15}
Definition \ref{D1.1} shows that $b_j = (\vp_j \wh{f} )^\vee$ is the optimal choice. Otherwise the short proof in \cite[p.\,79]{T83}
is essentially a Fourier multiplier theorem. We will not need Proposition \ref{P1.14} in the sequel. But it may justify to call $b_j$
in \eqref{1.58}, \eqref{1.59} {\em Fourier--analytical building blocks} on equal footing with atoms, wavelets, and quarks.
\end{remark}

\section{Quarkonial representations}   \label{S2}
\subsection{Quarkonial systems}   \label{S2.1}
So far we described in Section \ref{S1} the distinguished building blocks playing a decisive role in the recent theory of function 
spaces. These are the Fourier--analytical building blocks in the Sections \ref{S1.2} and \ref{S1.5}, atoms (and molecules) in Section
\ref{S1.3}, and wavelet in Section \ref{S1.4}. This will now be complemented by quarks as a fourth type of building blocks. They admit
universal representations of tempered distributions on $\rn$ in the context of the spaces $\As (\rn)$ and their weighted 
generalizations. First we collect what is already known. We follow essentially \cite[Section 4.3.6, pp.\,138--142]{T20} and the
references given there, in particular to \cite[Sections 1.6, 3.2]{T06}.

Let $k$ be a non--negative $C^\infty$ function in $\rn$, $n\in \nat$, with
\begin{\eq}   \label{2.1}
\supp k \subset \big\{ y\in \rn: \ |y| < 2^J, \ y_j >0 \big\}
\end{\eq}
for some $J\in \nat$ such that
\begin{\eq}   \label{2.2}
\sum_{m \in \zn} k(x-m) =1 \qquad \text{where} \quad x\in \rn.
\end{\eq}
Let again $x^\beta$ 
with $x \in \rn$ and $\beta \in \nat^n_0$ be as in \eqref{1.20}. Let $ax = (ax_1, \ldots, ax_n)$ for 
$a\in \real$ and $x= (x_1, \ldots, x_n) \in \rn$.  Then
\begin{\eq}   \label{2.3}
k^\beta (x) = \big( 2^{-J} x \big)^\beta k(x) \ge 0, \qquad x \in \rn, \quad \beta \in \nat^n_0,
\end{\eq}
and
\begin{\eq}   \label{2.4}
k^\beta_{j,m} (x) = k^\beta \big( 2^j x -m \big), \qquad j\in \no, \quad m\in \zn, \quad \beta \in \nat^n_0, \quad x\in \rn.
\end{\eq}
These elementary building blocks, called {\em quarks}, are $L_\infty$-normalized atoms as recalled in Definition \ref{D1.3} without
moment conditions. The additional assumption $y_j >0$ in \eqref{2.1} resulting in \eqref{2.3} is immaterial for what follows in the
next sections. But it is useful later on for some applications, for example the positivity property as considered in Section 
\ref{S3.3}. 
We need a second system which is {\em totally independent} of the above  quarks $k^\beta_{j,m}$. 
For $\beta = (\beta_1, \ldots, \beta_n) \in \nat^n_0$ let again
$|\beta| = \sum^n_{j=1} \beta_j$ and $\beta! = \prod^n_{j=1} \beta_j !$ (with $0! =1$). Let $x^\beta$ be as above. 
Let $\omega$ be a real $C^\infty$ function in $\rn$ with
\begin{\eq}   \label{2.5}
\supp \omega \subset (- \pi, \pi)^n \qquad \text{and} \quad \omega (x)=1 \quad \text{if} \quad |x| \le 2.
\end{\eq}
Let for some $J\in \nat$ as in \eqref{2.1},
\begin{\eq}   \label{2.6}
\omega^\beta (x) = \frac{i^{|\beta|} 2^{J|\beta|}}{(2\pi)^n \beta!} \, x^\beta \, \omega (x), \qquad x\in \rn, \quad \beta \in
\nat^n_0,
\end{\eq}
and
\begin{\eq}   \label{2.7}
\Om^\beta (x) = \sum_{m \in \zn} (\omega^\beta)^\vee (m) \, e^{-imx}, \qquad x\in \rn.
\end{\eq}
Let $\vp_0$ be a non--negative  $C^\infty$ function in $\rn$ with
\begin{\eq}   \label{2.8}
\vp_0 (x) =1 \ \text{if} \ |x| \le 1 \quad \text{and} \quad \vp_0 (x) =0 \ \text{if} \ |x| \ge 3/2
\end{\eq}
as in \eqref{1.4} and let $\vp^0 (x) = \vp_0 (x) - \vp_0 (2x)$, $x\in \rn$. Then 
\begin{align}   \label{2.9}
(\Phi^\beta_F )^\vee (\xi) &= \vp_0 (\xi) \, \Om^\beta (\xi), &&\text{$\beta \in \nat^n_0$, \quad $\xi \in \rn,$}&&   \\ \label{2.10}
(\Phi^\beta_M )^\vee (\xi) &= \vp^0 (\xi) \, \Om^\beta (\xi), &&\text{$\beta \in \nat^n_0$, \quad $\xi \in \rn,$}&& 
\end{align}
and
\begin{\eq}   \label{2.11}
\Phi^\beta_{j,m} (x) =
\begin{cases}
\Phi^\beta_F (x-m) &\text{if $j=0$}, \\
\Phi^\beta_M (2^j x -m) &\text{if $j\in \nat$},
\end{cases}
\end{\eq}
where $x\in \rn$, $\beta \in \nat^n_0$ and $m\in \zn$. Then $\Phi^\beta_{j,m} \in S(\rn)$ are analytic functions. One may assume that
$\omega (x) = \prod^n_{j=1} \wt{\omega} (x_j)$ is the product of related real even one--dimensional functions. Then $(\omega^\beta)^\vee(m) = (-1)^{|\beta|} (\omega^\beta)^\vee (-m)$ 
are real, $m\in \zn$. Inserted in \eqref{2.7} it follows that $\Om^\beta (x) = (-1)^{|\beta|} \Om^\beta (-x)$ and that $i^{|\beta|}
\Om^\beta (x)$ is real. Assuming in addition that $\vp_0$ is the product of related one--dimensional even real functions then one obtains from \eqref{2.9}--\eqref{2.11} that the
functions $\Phi^\beta_{j,m}$ are also real. In other words, we may assume that
\begin{\eq} \label{2.12}
\Phi^\beta_{j,m} \in S(\rn) \quad \text{are real and analytic}, \quad \beta \in \nat^n_0, \ j \in \no, \ m\in \zn.
\end{\eq}
We collect some basic properties of the two quarkonial systems
\begin{\eq}   \label{2.13}
\Big\{ k^\beta_{j,m}: \ \beta \in \nat^n_0, \ j\in \no, \ m\in \zn \Big\}
\end{\eq}
and
\begin{\eq}   \label{2.14}
\Big\{ \Phi^\beta_{j,m}: \ \beta \in \nat^n_0, \ j\in \no, \ m\in \zn \Big\}.
\end{\eq} 
This is the quarkonial counterpart of the atomic systems in \eqref{1.25}, the wavelet systems in \eqref{1.44} and the 
Fourier--analytical system \eqref{1.58}, \eqref{1.59}.

Let $k$ be as in \eqref{2.1}. Then there is a number $\ve >0$ such that
\begin{\eq}   \label{2.15}
\supp k \subset \big\{ y \in \rn: \ |y| <2^{J - \ve}, \ y_j >0 \big\}.
\end{\eq}

\begin{proposition}  \label{P2.1}
{\em (i)} Let $- \infty < \vk < \ve$ and $K\in \nat$. Then there is a number $C>0$ such that
\begin{\eq}   \label{2.16}
\big| 2^{\vk |\beta|} D^\alpha k^\beta_{j,m} (x) \big| \le C \, 2^{j|\alpha|}, \qquad x\in \rn,
\end{\eq}
for all $\beta \in \nat^n_0$, $j\in \no$, $m\in \zn$ and all $\alpha \in \nat^n_0$ with $|\alpha| \le K$.
\\[0.1cm]
{\em (ii)} Let $\vk \in \real$, $K \in \nat$ and $N \in \nat$. Then there is a number $C>0$ such that
\begin{\eq}   \label{2.17}
\big| 2^{\vk |\beta|} \, D^\alpha \Phi^\beta_{j,m} (x) \big| \le C \, 2^{j |\alpha|} \big( 1 + |2^j x -m|\big)^{-N},
\quad x \in \rn,
\end{\eq}
for all $\beta \in \nat^n_0$, $j\in \no$, $m\in \zn$ and all $\alpha \in \nat^n_0$ with $|\alpha| \le K$.
\end{proposition}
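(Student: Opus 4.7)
For (i), the plan is to use the product structure $k^\beta(x)=2^{-J|\beta|}x^\beta k(x)$ and apply Leibniz to $D^\alpha k^\beta(x)$. On $\supp k\subset\{|y|<2^{J-\ve},\,y_j>0\}$ the surviving polynomial factor obeys $\prod_j|x_j|^{\beta_j-\gamma_j}\le 2^{(J-\ve)(|\beta|-|\gamma|)}$ whenever the derivative $D^\gamma x^\beta$ is non-zero, while for $|\gamma|\le K$ the combinatorial factor $\beta!/(\beta-\gamma)!$ grows only polynomially in $|\beta|$: $\le(1+|\beta|)^K$. Combined with the uniform bound on derivatives of $k$ and with the prefactor $2^{-J|\beta|}$, this produces $|D^\alpha k^\beta(x)|\le C(1+|\beta|)^K 2^{-\ve|\beta|}$. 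Scaling via $k^\beta_{j,m}(x)=k^\beta(2^jx-m)$ introduces the factor $2^{j|\alpha|}$, and because $\vk<\ve$ the residual $2^{\vk|\beta|}(1+|\beta|)^K 2^{-\ve|\beta|}=(1+|\beta|)^K 2^{-(\ve-\vk)|\beta|}$ is uniformly bounded in $\beta$, yielding \eqref{2.16}.

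For (ii), the crucial first step is to rewrite $\Phi^\beta_F$ and $\Phi^\beta_M$ as constant multiples of $\frac{(-1)^{|\beta|}2^{J|\beta|}}{\beta!}D^\beta$ applied to a fixed Schwartz function. Poisson summation applied to $\omega^\beta\in C^\infty_c(\rn)$ yields $\Om^\beta(\xi)=(2\pi)^{n/2}\sum_{k\in\zn}\omega^\beta(\xi-2\pi k)$. Since $\supp\vp_0\cup\supp\vp^0\subset\{|\xi|\le 3/2\}\subset(-\pi,\pi)^n$, only the $k=0$ summand contributes on these supports; since further $\omega(\xi)=1$ on $\{|\xi|\le 2\}$, one has $\omega^\beta(\xi)=\frac{i^{|\beta|}2^{J|\beta|}}{(2\pi)^n\beta!}\xi^\beta$ there. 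Taking Fourier transforms and using the identity $\widehat{\xi^\beta f}(x)=i^{|\beta|}D^\beta\wh{f}(x)$, one obtains
\[
\Phi^\beta_F(x)=\frac{(-1)^{|\beta|}2^{J|\beta|}}{(2\pi)^{n/2}\beta!}D^\beta\wh{\vp_0}(x),\qquad \Phi^\beta_M(x)=\frac{(-1)^{|\beta|}2^{J|\beta|}}{(2\pi)^{n/2}\beta!}D^\beta\wh{\vp^0}(x),
\]
with $\wh{\vp_0},\wh{\vp^0}\in S(\rn)$.

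With this identification, I would estimate $D^{\alpha+\beta}\wh{\vp_0}$ by classical integration by parts: on $\supp\vp_0\subset\{|\xi|\le 3/2\}$, Leibniz combined with $\gamma!/(\gamma-\mu)!\le(1+|\gamma|)^{|\mu|}$ delivers $|D^{\alpha+\beta}\wh{\vp_0}(x)|\le C(1+|\beta|)^N(3/2)^{|\beta|}(1+|x|)^{-N}$ uniformly in $|\alpha|\le K$, for every $N\in\nat$. To absorb the prefactor $2^{\vk|\beta|}$, I would use the multinomial bound $\beta!\ge|\beta|!/n^{|\beta|}$ together with the trivial fact that every term of $e^B=\sum_k B^k/k!$ is bounded by $e^B$; applied with $B=An\cdot 2^{\vk'}$ this gives $2^{\vk'|\beta|}A^{|\beta|}/\beta!\le e^{An2^{\vk'}}$ uniformly in $\beta$, for any $\vk'\in\real$ and $A>0$. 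Choosing $\vk'>\vk$ so that $(1+|\beta|)^N\lesssim 2^{(\vk'-\vk)|\beta|}$ and taking $A=2^J\cdot 3/2$, the full prefactor $2^{\vk|\beta|}(1+|\beta|)^N 2^{J|\beta|}(3/2)^{|\beta|}/\beta!$ is uniformly bounded. The scaling identity $\Phi^\beta_{j,m}(x)=\Phi^\beta_\ast(2^jx-m)$ with $\ast\in\{F,M\}$ then yields the factor $2^{j|\alpha|}$ upon differentiation and converts the spatial decay into $(1+|2^jx-m|)^{-N}$, completing \eqref{2.17}.

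The main obstacle is the reduction step in (ii): one must carefully exploit the interplay between the periodicity of $\Om^\beta$, the containment $\supp\vp_0\cup\supp\vp^0\subset(-\pi,\pi)^n$, and the fact that $\omega\equiv 1$ on this region, in order to identify $\Phi^\beta_F$ and $\Phi^\beta_M$ with $D^\beta$ of a Schwartz function. Once this identification is in place, the remaining work is routine: the Schwartz-class decay produces the spatial factor, and the factorial $\beta!$ in the denominator is strong enough to defeat any exponential $2^{\vk|\beta|}$ whatsoever.
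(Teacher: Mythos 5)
Your proof of part (i) is essentially the paper's argument, spelled out in more detail: the paper's equation \eqref{2.18} records only $|D^\alpha k^\beta(x)|\le c(1+|\beta|)^K$, but the mechanism you identify --- that on $\supp k\subset\{|y|<2^{J-\ve}\}$ one has $|(2^{-J}x)^{\beta-\gamma}|\le 2^{-\ve(|\beta|-|\gamma|)}$, producing the decisive factor $2^{-\ve|\beta|}$ --- is exactly what is needed for \eqref{2.16} to hold with $\vk$ up to $\ve$, and is clearly the intended content. Your Leibniz bookkeeping and the scaling step via \eqref{2.4} are correct.

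Part (ii) is where you genuinely diverge from the paper, and your route is cleaner. The paper points to \cite{T06}, where the estimate is obtained by writing $\Phi^\beta_F(x)=\frac{2^{J|\beta|}}{(2\pi)^n\beta!}\sum_{m}D^\beta\omega^\vee(m)\,\wh{\vp_0}(x+m)$ and invoking the nontrivial decay estimate \eqref{2.20} on $D^\beta\omega^\vee$ together with Stirling's formula for $\beta!$. You instead observe, via Poisson summation, that $\Om^\beta(\xi)=(2\pi)^{n/2}\sum_k\omega^\beta(\xi+2\pi k)$, and then that on $\supp\vp_0\cup\supp\vp^0\subset\{|\xi|\le 3/2\}\subset(-\pi,\pi)^n$ only the $k=0$ term survives and $\omega\equiv 1$ there; hence
\begin{equation*}
\Phi^\beta_F(x)=\frac{(-1)^{|\beta|}2^{J|\beta|}}{(2\pi)^{n/2}\,\beta!}\,D^\beta\wh{\vp_0}(x),\qquad
\Phi^\beta_M(x)=\frac{(-1)^{|\beta|}2^{J|\beta|}}{(2\pi)^{n/2}\,\beta!}\,D^\beta\wh{\vp^0}(x).
\end{equation*}
This identification collapses the lattice sum and makes the $\beta!$ in the denominator explicit, so the estimate \eqref{2.17} then follows from a routine integration-by-parts bound for $D^{\alpha+\beta}$ of the fixed Schwartz functions $\wh{\vp_0},\wh{\vp^0}$ (with the combinatorial factors growing only like $(1+|\beta|)^N(3/2)^{|\beta|}$), plus the elementary bound $A^{|\beta|}/\beta!\le e^{An}$; you need neither the holomorphic/Cauchy-type argument behind \eqref{2.20} nor Stirling. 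What your approach buys is transparency: the exponential-in-$|\beta|$ decay is seen at once to be an artifact of the factorial normalization built into \eqref{2.6}, rather than the output of a convolution estimate. What the paper's approach buys is robustness: estimating $D^\beta\omega^\vee$ directly does not rely on the support of $\vp_0,\vp^0$ fitting inside a single fundamental domain of the lattice, and that more general estimate is reused elsewhere in \cite{T06}. Both routes are correct; yours is the shorter and more self-contained proof of this particular proposition.
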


\begin{proof}
{\em Step 1.} Let $\delta >0$ and let $k^\beta (x)$ be as in \eqref{2.3}. Then \eqref{2.16} follows from
\begin{\eq}   \label{2.18}
| D^\alpha k^\beta (x) | \le c \, (1+|\beta|)^K \le c_{\delta} \, 2^{\delta |\beta|}, \qquad \beta \in \nat^n_0, \quad \alpha \in \nat^n_0, \quad |\alpha| \le K,
\end{\eq}
and \eqref{2.4}.
\cm
{\em Step 2.} Part (ii) follows from
\begin{\eq}   \label{2.19}
\big| \Phi^\beta_{j,m} (x) \big| \le c_{\vk} \, 2^{-\vk \, |\beta|} \big(1 + |2^j x-m| \big)^{-N}, \qquad x\in \rn,
\end{\eq}
for any $\vk \in \real$, some $c_{\vk} >0$ and all $\beta \in \nat^n_0$. This assertion is covered by \cite[(3.132),
 p.\,171]{T06}. It relies on \eqref{2.5}--\eqref{2.11}, Stirling's formula applied to $\beta!$ and the crucial observation that for
any $a>0$ there are constants $C>0$ and $c_a >0$ such that
\begin{\eq}   \label{2.20}
\big| D^\beta \omega^\vee (x) \big| \le c_a \, 2^{C |\beta|} \, \big( 1 + |x|^2 \big)^{-a}, \qquad x\in \rn, \quad \beta \in \nat^n_0,
\end{\eq}
\cite[(3.115), p.\,167]{T06}. A short direct proof of this estimate may be found in \cite[Remark 3.22, pp.\,168--169]{T06}. An earlier
version may be found in \cite[pp.\,18--19]{T01} based on Cauchy's representation theorem for holomorphic  functions of several 
variables.
\end{proof}

We return to the two quarkonial systems introduced in \eqref{2.13} and \eqref{2.14}. It follows part (i) of the above proposition that 
\begin{\eq}   \label{2.21}
\Big\{ 2^{\vk |\beta|} \, k^\beta_{j,m} : \ \beta \in \nat^n_0, \ j\in \no, \ m\in \zn \Big\}, \qquad \vk < \ve,
\end{\eq}
is a collection of $(K,0,C,d)$--atomic systems according to \eqref{1.25}, based on Definition \ref{D1.3} for a suitably chosen (but
immaterial) $d>1$, uniformly in $\beta \in \nat^n_0$. Let
\begin{\eq}   \label{2.21a}
\Big\{ 2^{\vk |\beta|} \, \Phi^\beta_{j,m} : \ \beta \in \nat^n_0, \ j\in \no, \ m\in \zn \Big\}, \qquad \vk \in \real,
\end{\eq}
be the generalization of the quarkonial system \eqref{2.14}. One has by \eqref{2.10} and \eqref{2.11} that
\begin{\eq}   \label{2.22}
\int_{\rn} x^\gamma \Phi^\beta_{j,m} (x) \, \di x =0 \quad \text{for all $j\in \nat$, $\beta \in \nat^n_0$, $\gamma \in \nat^n_0$, $m
\in \zn$}.
\end{\eq}
Then it follows from part (ii) of Proposition \ref{P2.1} and Remark \ref{R1.9} that \eqref{2.21a} is a collection of admitted molecular
systems for the spaces $\As (\rn)$ with $s<0$, uniformly in $\beta \in \nat^n_0$.

\subsection{Representations: positive smoothness}    \label{S2.2}
We ask for quarkonial counterparts of the atomic representations in the spaces $\As (\rn)$ as described in Theorem \ref{T1.7} and of
the wavelet decompositions according to Theorem \ref{T1.12}. For this purpose one has first to introduce  related sequence spaces as in
the Definitions \ref{D1.5} and \ref{D1.10}.

Let again $\chi_{j,m}$ be the characteristic function of the cube $Q_{j,m} = 2^{-j}m + 2^{-j} (0,1)^n$ where $j\in \no$ and $m\in \zn$.
We adapt the sequence spaces $a^s_{p,q} (\rn)$, $a\in \{b,f \}$, as introduced in Definition \ref{D1.5} to the quarkonial systems
\eqref{2.21} and \eqref{2.21a} incorporating the additional parameter $\beta \in \nat^n_0$.

\begin{definition}   \label{D2.2}
Let $s\in \real$ and $0<p,q \le \infty$ $(p<\infty$ for $f$--spaces$)$. Let $\vk \in \real$. Then $b^s_{p,q} (\rn)^{\vk}$ is the
collection of all sequences
\begin{\eq}   \label{2.23}
\lambda = \{ \lambda^\beta: \ \beta \in \nat^n_0 \}, \quad \lambda^\beta = \{ \lambda^\beta_{j,m} \in \comp: \ j\in \no, \ m \in \zn 
\}
\end{\eq}
such that
\begin{\eq}   \label{2.24}
\| \lambda \, | b^s_{p,q} (\rn)^{\vk} \| = \sup_{\beta \in \nat^n_0} 
2^{\vk |\beta|} \bigg( \sum^\infty_{j=0} 2^{j(s- \frac{n}{p})q} \Big( \sum_{m\in \zn}
|\lambda^\beta_{j,m} |^p \Big)^{q/p} \bigg)^{1/q}
\end{\eq}
is finite and $f^s_{p,q} (\rn)^{\vk}$ is the collection of all sequences according to \eqref{2.23} such that
\begin{\eq}   \label{2.25}
\| \lambda \, | f^s_{p,q} (\rn)^{\vk} \| = \sup_{\beta \in \nat^n_0} 2^{\vk |\beta|}
\bigg\| \Big( \sum^\infty_{j=0} \sum_{m \in \zn} 2^{jsq} \big| 
\lambda^\beta_{j,m} \, \chi_{j,m} (\cdot) \big|^q \Big)^{1/q} \, | L_p (\rn) \bigg\|
\end{\eq}
is finite $($with the usual modifications if $\max (p,q) = \infty).$
\end{definition}

\begin{remark}   \label{R2.3}
This is the counterparts of the Definitions \ref{D1.5} and \ref{D1.10}. Quite obviously, $a^s_{p,q} (\rn)^{\vk}$ with $a \in \{b,f \}$ 
are quasi--Banach spaces (Banach spaces if $p \ge 1$, $q \ge 1$).
\end{remark}

We wish to extend Theorem \ref{T1.7} from the qualitative atomic systems as introduced in \eqref{1.25} to the more constructive 
quarkonial  systems according to \eqref{2.21}, \eqref{2.21a}. Let $k^\beta_{j,m}$ and $\Phi^\beta_{j,m}$ be as there. Let $\ve >0$
be as in \eqref{2.15} and Proposition \ref{P2.1}. Let again
\begin{\eq}   \label{2.26}
\sigma^n_p = n \Big( \max \big( \frac{1}{p}, 1 \big) -1 \Big), \qquad \sigma^n_{p,q} = n \Big( \max \big( \frac{1}{p}, \frac{1}{q}, 1
\big) - 1 \Big),
\end{\eq}
where $n\in \nat$ and $0<p,q \le \infty$. Let $\As (\rn)$ be the spaces as introduced in Definition \ref{D1.1} and Remark \ref{R1.2}.

\begin{theorem} \label{T2.4}
Let $0<p,q \le \infty$ $(p<\infty$ for $F$--spaces$)$,
\begin{\eq}   \label{2.27}
s> \sigma^{n}_p \ \text{for $B$--spaces}, \qquad s>\sigma^{n}_{p,q} \ \text{for $F$--spaces},
\end{\eq}
and $\vk >-\ve$. 
Then $f\in S'(\rn)$ belongs to $\As (\rn)$ if, and only if, it can be represented as
\begin{\eq}   \label{2.28}
f = \sum_{\substack{\beta \in \nat^n_0, j\in \no, \\ m \in \zn}} \lambda^\beta_{j,m} \, k^\beta_{j,m}, \qquad \lambda \in a^s_{p,q}(\rn)^{\vk},
\end{\eq}
unconditional convergence being in $S'(\rn)$. Furthermore,
\begin{\eq}   \label{2.29}
\| f \, | \As (\rn) \| \sim \inf \| \lambda \, | a^s_{p,q} (\rn)^{\vk} \|
\end{\eq}
where the infimum is taken over all admissible representations \eqref{2.28}. Let
\begin{\eq}   \label{2.30}
\lambda^\beta_{j,m} (f) = 2^{jn} \, \big(f, \Phi^\beta_{j,m} \big), \qquad f \in S' (\rn),
\end{\eq}
and
\begin{\eq}   \label{2.31}
\lambda (f) = \{ \lambda^\beta (f): \ \beta \in \nat^n_0 \}, \quad \lambda^\beta (f) = \{ \lambda^\beta_{j,m} (f): \ j \in \no, \ m\in
\zn \}.
\end{\eq}
Then $f \in \As (\rn)$ can be represented as
\begin{\eq}   \label{2.32}
f = \sum_{\substack{\beta \in \nat^n_0, j\in \no, \\ m \in \zn}} \lambda^\beta_{j,m}(f) \, k^\beta_{j,m}, 
\end{\eq}
unconditional convergence being in $S'(\rn)$, and
\begin{\eq}   \label{2.33}
\| f \, | \As (\rn) \| \sim \| \lambda (f) \, | a^s_{p,q} (\rn)^{\vk} \|
\end{\eq}
$($equivalent quasi--norms$)$.
\end{theorem}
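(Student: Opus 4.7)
The proof splits into a sufficiency part and a necessity part. For sufficiency, the plan is to reduce to the atomic representation Theorem~\ref{T1.7}. By Proposition~\ref{P2.1}(i), for each fixed $\vk' < \ve$ and each $K \in \nat$, the rescaled quarks $\{2^{\vk'|\beta|} k^\beta_{j,m}\}$ form a $(K,0,C,d)$--atomic system uniformly in $\beta$; the moment-order parameter $L = 0$ is admissible in Theorem~\ref{T1.7} precisely because of the hypothesis~\eqref{2.27}. Since $\vk > -\ve$, one can fix $\vk'$ with $-\vk < \vk' < \ve$. For each $\beta$, writing $f_\beta = \sum_{j,m} \lambda^\beta_{j,m} k^\beta_{j,m} = \sum_{j,m}\bigl(2^{-\vk'|\beta|} \lambda^\beta_{j,m}\bigr)\bigl(2^{\vk'|\beta|} k^\beta_{j,m}\bigr)$, Theorem~\ref{T1.7} yields
$$\|f_\beta \,|\, \As(\rn)\| \le c\, 2^{-\vk'|\beta|} \|\lambda^\beta \,|\, a^s_{p,q}(\rn)\| \le c\, 2^{-(\vk+\vk')|\beta|} \|\lambda \,|\, a^s_{p,q}(\rn)^{\vk}\|$$
with $c$ independent of $\beta$ and $\vk + \vk' > 0$. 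Summing over $\beta \in \nat^n_0$ via the $\vr$--triangle inequality with $\vr = \min(p,q,1)$ produces $f = \sum_\beta f_\beta$ with unconditional convergence in $\As(\rn) \hookrightarrow S'(\rn)$ and yields the upper bound in \eqref{2.29}.

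For necessity the strategy is to exhibit the explicit representation~\eqref{2.32} with coefficients~\eqref{2.30}; the existence half of \eqref{2.29} then follows from~\eqref{2.33}. The identification of $2^{jn}(f, \Phi^\beta_{j,m})$ with a quark coefficient proceeds through Taylor expansion. Writing $f = \sum_j f_j$ with $f_j = (\vp_j \widehat{f})^\vee$ using the dyadic resolution from Definition~\ref{D1.1}, each $f_j$ is entire of exponential type $\sim 2^j$. The partition of unity $\sum_m k(2^j\cdot - m) = 1$ from~\eqref{2.2} localizes $f_j$, and, since $f_j$ is entire, one Taylor-expands it around each lattice point $2^{-j}m$ on the bounded support of $k(2^j \cdot - m)$. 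The algebraic identity $(x - 2^{-j}m)^\beta k(2^j x - m) = 2^{-j|\beta|}\, 2^{J|\beta|}\, k^\beta_{j,m}(x)$ then delivers the formal expansion $f_j = \sum_{\beta, m} c^\beta_{j,m}\, k^\beta_{j,m}$ with $c^\beta_{j,m} = \frac{2^{J|\beta|}}{\beta!\, 2^{j|\beta|}}\, D^\beta f_j(2^{-j}m)$. The Fourier-side construction~\eqref{2.5}--\eqref{2.11} of $\Phi^\beta_{j,m}$ is designed so that $2^{jn}(f, \Phi^\beta_{j,m}) = c^\beta_{j,m}$: the cutoffs $\vp_0,\vp^0$ in \eqref{2.9}--\eqref{2.10} restrict the pairing to the $j$-th Fourier block $f_j$, while the periodic function $\Om^\beta$ built from \eqref{2.6}--\eqref{2.7} converts integration against $\Phi^\beta_{j,m}$ into evaluation of the Taylor coefficient, the normalizations matching by construction.

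It remains to prove the norm bound $\|\lambda(f) \,|\, a^s_{p,q}(\rn)^{\vk}\| \le c\, \|f \,|\, \As(\rn)\|$. This part is actually independent of the lower bound $\vk > -\ve$: Proposition~\ref{P2.1}(ii) shows that for \emph{every} $\vk \in \real$, the system $\{2^{\vk|\beta|} \Phi^\beta_{j,m}\}$ consists of molecules uniform in $\beta$, with arbitrarily rapid decay and arbitrarily many moment conditions~\eqref{2.22}. The standard coefficient estimate from the molecular characterization of $\As(\rn)$ (Remark~\ref{R1.9}), together with the Fefferman--Stein vector-valued maximal inequality in the $F$-case and its Peetre-maximal-function counterpart in the $B$-case, yields $2^{\vk|\beta|}\|\lambda^\beta(f) \,|\, a^s_{p,q}(\rn)\| \le c\, \|f\,|\, \As(\rn)\|$ with $c$ uniform in $\beta$; taking the supremum over $\beta$ completes~\eqref{2.33}. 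The main obstacle I anticipate is the clean verification that the Fourier-analytic pairing~\eqref{2.30} genuinely coincides with the Taylor-coefficient expression from the partition-of-unity argument. This relies on the rapid-decay estimate~\eqref{2.20} for derivatives of $\omega^\vee$, which is the hidden engine behind both Proposition~\ref{P2.1}(ii) and the matching of normalizations in \eqref{2.6}--\eqref{2.11}; the restriction $\vk > -\ve$ enters only on the sufficiency side, through the requirement that the quark system still be atomic after the $\beta$-weighting is absorbed.
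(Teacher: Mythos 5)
Your sufficiency argument is exactly the paper's: Proposition \ref{P2.1}(i) makes the $\beta$--rescaled quarks a $(K,0,C,d)$--atomic system, the hypothesis \eqref{2.27} allows $L=0$ in Theorem \ref{T1.7}, and the sum over $\beta$ is handled by the $u$--triangle inequality with an auxiliary exponent strictly between $-\vk$ and $\ve$ (the paper's $\vr$ is your $-\vk'$). For the representability \eqref{2.32} you re-derive, via $f=\sum_j(\vp_j\wh{f})^\vee$, the partition of unity \eqref{2.2}, Taylor expansion of the entire functions $f_j$ at the points $2^{-j}m$, and the identity $2^{jn}(f,\Phi^\beta_{j,m})=\frac{2^{(J-j)|\beta|}}{\beta!}D^\beta f_j(2^{-j}m)$, precisely the construction of \cite[Theorem 3.21]{T06}; the paper simply cites that result (for $B^s_{p,p}$) and extends it to all spaces in \eqref{2.27} by embedding. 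Your identity does hold by construction (on $\supp\vp^0(2^{-j}\cdot)$ one has $\omega=1$ and $\Om^\beta$ reduces, via its Fourier series, to the monomial factor in \eqref{2.6}, and $\vp^0(2^{-j}\xi)=\vp_j(\xi)$), so this part is a legitimate, if more laborious, substitute for the citation.

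The genuine soft spot is the coefficient estimate $2^{\vk|\beta|}\|\lambda^\beta(f)\,|\,a^s_{p,q}(\rn)\|\le c\,\|f\,|\,\As(\rn)\|$. You attribute it to ``the standard coefficient estimate from the molecular characterization (Remark \ref{R1.9})'', but Remark \ref{R1.9} is a synthesis statement (molecules as building blocks imply a norm bound on $f$); it gives no analysis/coefficient bound for the functionals $2^{jn}(f,\Phi^\beta_{j,m})$. What is needed is a local--means theorem whose kernels are the non--compactly supported $\Phi^\beta_{j,m}$, and the paper points out explicitly that such a molecular version of \cite[Theorem 1.15]{T08} is not available as stated. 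This is where the paper does its actual work: it writes $\Phi^\beta_{j,m}=\sum_{k\in\zn}a^{L,\beta,k}_{j,m}$ via the $(-\Delta)^L$/partition-of-unity identity \eqref{2.37}--\eqref{2.42}, obtaining compactly supported atoms with moment conditions up to order $2L-1$, decay $\langle k\rangle^{-D}$ and exponential decay in $|\beta|$, and then applies the compactly supported local--means theorem kernel by kernel. Your appeal to Fefferman--Stein/Peetre maximal functions gestures at machinery that could indeed prove the required estimate directly (it is the standard analysis-operator bound for kernels with rapid decay, smoothness and all vanishing moments), but as written the step is asserted rather than proved and rests on a misquoted statement; to close it you must either carry out that maximal-function argument uniformly in $\beta$ or reproduce the paper's atomic decomposition of the kernels. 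You are right, and in agreement with the paper, that this half needs no restriction on $\vk$, the condition $\vk>-\ve$ entering only through the synthesis side.
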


\begin{proof}
{\em Step 1.} We proved in \cite[Theorem 3.21, pp.\,165--169]{T06} the above theorem for the spaces $B^s_p
(\rn) = B^s_{p,p} (\rn)$, $0<p \le \infty$, $s > \sigma^{n}_p$, including the representability \eqref{2.32} with \eqref{2.30}. The
usual embeddings
ensure the representation \eqref{2.32} in all spaces covered by \eqref{2.27}. In other words, it remains to extend this assertion
to all these spaces.
\cm
{\em Step 2.} We rely on atomic representations as described in Theorem \ref{T1.7}. By \eqref{2.27} we may choose $L=0$, which means
that no moment conditions are required. Then it follows from Definition \ref{D1.3} and Proposition \ref{P2.1}(i) that
\begin{\eq}   \label{2.34}
\big\{ 2^{-\vr \, |\beta|} \, k^\beta_{j,m}: \ j\in \no, \ m \in \zn \big\}, \qquad \beta \in \nat^n_0,
\end{\eq}
with $-\vr < \ve$ are admitted atoms, uniformly in $\beta \in \nat^n_0$. Recall that $\As (\rn)$ is a $u$--Banach space where $u= 
\min (1,p,q)$. Let $\vk > \vr > -\ve$. Then it follows from \eqref{2.28}, \eqref{2.23} and Theorem \ref{T1.7} that
\begin{\eq}   \label{2.35}
\| f \, | \As (\rn) \|^{u} \le c \sum_{\beta \in \nat^n_0} 2^{\vr |\beta|u} \, \| \lambda^\beta \, | a^s_{p,q} (\rn) \|^{u} \le c'\,
\|\lambda \, | a^s_{p,q} (\rn)^{\vk} \|^{u}.
\end{\eq}
\cm
{\em Step 3.} By \eqref{2.21a} and \eqref{2.22} one could interpret \eqref{2.30} as the molecular version of the local means for the spaces
$\As (\rn)$ according to \cite[Definition 1.9, Theorem 1.15, pp.\,6--7]{T08}. But this is not stated there explicitly. For this reason
we decompose the kernels $2^{jn} \Phi^\beta_{j,m}$ into atomic pieces. Let $\psi$ be a 
compactly supported $C^\infty$ function in $\rn$ such that
\begin{\eq}  \label{2.36}
1 = \sum_{k\in \zn} \psi(x-k), \qquad x\in \rn.
\end{\eq}
Let $j\in \nat$. The terms with $j=0$ can be incorporated afterwards. Let $(-\Delta)^L = \big( - \sum^n_{l=1} \pa^2_l \big)^L$, $L\in
\nat$, and let $\Phi^\beta_{j,m}$ be given by \eqref{2.10}, \eqref{2.11}. Then the summation in
\begin{\eq}   \label{2.37}
\begin{aligned}
 \Phi^\beta_{j,m} (x) &= \big( F F^{-1} \Phi^\beta_{j,m} \big)(x) \\
&= \sum_{k\in \zn} (-\Delta)^L \Big[ \psi (2^j x-m-k) F \Big( |\xi|^{-2L} \big( F^{-1} \Phi^\beta_{j,m} \big)(\xi) \Big) (x)
\Big]
\end{aligned}
\end{\eq}
will be justified by what follows. One has by \eqref{2.10}, \eqref{2.11},
\begin{\eq}   \label{2.38}
\begin{aligned}
\big( F^{-1} \Phi^\beta_{j,m} \big)(\xi) &= c \, \int_{\rn} e^{ix \xi} \Phi^\beta_M (2^j x -m ) \, \di x \\
&= c\, 2^{-jn} \int_{\rn} e^{i 2^{-j} x \xi} \Phi^\beta_M (x-m) \, \di x \\
&= c \, 2^{-jn} \, e^{i 2^{-j} m \xi} \int_{\rn} e^{i 2^{-j} x \xi} \, \Phi^\beta_M (x) \, \di x \\
&= 2^{-jn} e^{i 2^{-j} m \xi} \big( \Phi^\beta_M \big)^\vee ( 2^{-j} \xi) \\
&= 2^{-jn} e^{i 2^{-j} m \xi} \vp^0 (2^{-j} \xi) \, \Om^\beta (2^{-j} \xi ).
\end{aligned}
\end{\eq}
Using the harmless modification $\vp^L (\xi) = |\xi|^{-2L} \vp^0 (\xi)$ of $\vp^0 (\xi)$ one obtains that
\begin{\eq}   \label{2.39}
\begin{aligned}
F \Big( |\xi|^{-2L} \big( F^{-1} \Phi^\beta_{j,m} \big) (\xi) \Big) (x) & = 2^{-2Lj - jn} F \Big( e^{i 2^{-j} m \xi} \vp^L
(2^{-j} \xi) \,\Om^\beta (2^{-j} \xi) \Big)(x) \\
&= 2^{-2Lj -jn} F \Big( \vp^L (2^{-j} \xi)\,\Om^\beta (2^{-j} \xi) \Big) (x- 2^{-j} m) \\
&= 2^{-2Lj} F \Big( \vp^L (\xi)\,\Om^\beta (\xi) \Big) (2^j x - m)
\end{aligned}
\end{\eq}
belongs to $S(\rn)$. Inserted in \eqref{2.37} one obtains
\begin{\eq}   \label{2.40}
\Phi^\beta_{j,m}(x) = \sum_{k \in \zn} a^{L,\beta,k}_{j,m} (x)
\end{\eq}
with
\begin{\eq}   \label{2.41}
a^{L,\beta,k}_{j,m} (x) = \Big[ (-\Delta)^L \Big( \psi( \cdot -k) \, F \big( \vp^L (\xi)\,\Om^\beta (\xi) \big) (\cdot) \Big)
\Big] (2^j x-m).
\end{\eq}
The arguments resulting in \eqref{2.12}, Proposition \ref{P2.1}(ii) and \eqref{2.19} do not change if one replaces $\vp^0$ in \eqref{2.10}
by $\vp^L$. In particular one has the same exponential decay with respect to $\beta \in \nat^n_0$. Furthermore $\psi(\cdot -k)$ applied
to a function belonging to $S(\rn)$ decays as $(1 + |k|^2)^{-D/2} = \langle k \rangle^{-D}$ for any $D>0$. This shows that
\begin{\eq}   \label{2.42}
\langle k \rangle^D 2^{\vk |\beta|} a^{L,\beta,k}_{j,m} \qquad \text{are $L_\infty$--normalized}
\end{\eq}
classical atoms according to Definition \ref{D1.3}
for any $D>0$ and any $\vk >0$, located at $2^{-j} (m+k)$ having moment conditions  up to order $2L-1$. Multiplied with
$2^{jn}$ they may serve as kernels of local means to which \cite[Theorem 1.15, p.\,7]{T08} can be applied based on the atomic sequence
spaces $a^s_{p,q} (\rn)$ in Definition \ref{D1.5}. They are $u$--Banach space with $u= \min (1,p,q)$. Then it follows from 
\eqref{2.30}, \eqref{2.31} and \eqref{2.40}, \eqref{2.42} that
\begin{\eq}   \label{2.43}
\| \lambda^\beta (f) \, | a^s_{p,q} (\rn) \| \le c \, 2^{-\vk \, |\beta|} \| f \, | \As (\rn) \|, \qquad f \in \As (\rn),
\end{\eq}
uniformly in $\beta \in \nat^n_0$ where $\vk \in \real$ is at our disposal. This shows in the same way as in \eqref{2.35} that
\begin{\eq}   \label{2.44}
\| \lambda (f) \, | a^s_{p,q} (\rn)^{\vk} \| \le c \, \|f \, | \As (\rn) \|, \qquad \vk \in \real.
\end{\eq}
Now \eqref{2.35}, the already ensured representability \eqref{2.32} and \eqref{2.44} prove the theorem.
\end{proof}

\begin{remark}   \label{R2.5}
The restriction $\vk > - \ve$ in the above theorem
comes from Proposition \ref{P2.1} (with $-\vk$ in place of $\vk$)
 and \eqref{2.34}. Of interest later on is the case $\vk =0$ what 
simplifies \eqref{2.24}, \eqref{2.25} somewhat. In any case one can replace $\ell_\infty$ with respect to $\beta \in \nat^n_0$ in
\eqref{2.24}, \eqref{2.25} by $\ell_r$ with $r>0$. This will be of some service for us later on.
\end{remark}

Of special interest for us are the H\"{o}lder--Zygmund spaces
\begin{\eq}   \label{2.45}
\Cc^s (\rn) = B^s_{\infty, \infty} (\rn), \qquad s \in \real.
\end{\eq}
according to \eqref{1.14}. The related specifications in the Theorems \ref{T1.7} and \ref{T1.12} show how corresponding atomic and
wavelet representations look like. It will be of interest for us later on to fix the quarkonial counterpart as far as covered by
Theorem \ref{T2.4}. By \eqref{2.24} the space
$\text{\cc} = b^s_{\infty, \infty} (\rn)^0$ collects all sequences \eqref{2.23},
\begin{\eq}   \label{2.46}
\lambda = \big\{ \lambda^\beta_{j,m} \in \comp : \ \beta \in \nat^n_0, \ j \in \no,  \ m \in \zn \big\}
\end{\eq}
such that
\begin{\eq}  \label{2.47}
\| \lambda \, | \text{\cc} \| = \sup_{\substack{\beta \in \nat^n_0, j\in \no, \\ m \in \zn}} 2^{js} \, \big| \lambda^\beta_{j,m} \big|
\end{\eq}
is finite. We specify Theorem \ref{T2.4}. In particular the quarkonial building blocks $k^\beta_{j,m}$ and $\Phi^\beta_{j,m}$ have the
same meaning as there, \eqref{2.4}, \eqref{2.11}.

\begin{corollary}   \label{C2.6}
Let $s>0$. Then $f\in S'(\rn)$ belongs to $\Cc^s (\rn)$ if, and only if, it can be represented as
\begin{\eq}  \label{2.47a}
f = \sum_{\substack{\beta \in \nat^n_0, j\in \no, \\ m \in \zn}} \lambda^\beta_{j,m} \, k^\beta_{j,m}, \qquad \lambda \in \text{\cc},
\end{\eq}
unconditional convergence being in $S'(\rn)$. Furthermore,
\begin{\eq}   \label{2.48}
\| f \, | \Cc^s (\rn) \| \sim \inf \| \lambda \, | \text{\cc} \|
\end{\eq}
where the infimum is taken over all admissible representations  \eqref{2.47a}. Let
\begin{\eq}   \label{2.49}
\lambda^\beta_{j,m} (f) = 2^{jn} \big( f , \Phi^\beta_{j,m} \big), \qquad f \in S'(\rn),
\end{\eq}
and
\begin{\eq}   \label{2.50}
\lambda (f) = \big\{ \lambda^\beta_{j,m} (f): \ \beta \in \nat^n_0, \  j\in \no, \ m\in \zn \big\}.
\end{\eq}
Then $f\in \Cc^s (\rn)$ can be represented as
\begin{\eq}   \label{2.51}
f = \sum_{\substack{\beta \in \nat^n_0, j\in \no, \\ m \in \zn}} \lambda^\beta_{j,m} (f) \, k^\beta_{j,m}, 
\end{\eq}
unconditional convergence being in $S'(\rn)$, and
\begin{\eq}   \label{2.52}
\| f \, | \Cc^s (\rn) \| \sim \| \lambda (f) \, |  \text{\cc} \|
\end{\eq}
$($equivalent norms$)$.
\end{corollary}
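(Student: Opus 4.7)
The plan is to obtain Corollary~\ref{C2.6} as the direct specialization of Theorem~\ref{T2.4} to the Besov case with $p=q=\infty$ and $\vk=0$.

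First I would check that the hypotheses of Theorem~\ref{T2.4} are met. For the $B$-scale the required smoothness is $s>\sigma^n_p$; with $p=\infty$ one has $\sigma^n_\infty = n(\max(0,1)-1)=0$, so the assumption $s>0$ is exactly condition \eqref{2.27} in this case. The quarkonial parameter choice $\vk=0$ lies in the admissible range $\vk>-\ve$ of Theorem~\ref{T2.4}, since $\ve>0$ by \eqref{2.15}. Hence the theorem applies with these parameters.

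Next I would identify the sequence spaces. Reading \eqref{2.24} with $p=q=\infty$, $\vk=0$, and noting that $n/p=0$ for $p=\infty$, the inner $\ell^p$-sum in $m$ collapses to $\sup_{m\in\zn}$, the outer $\ell^q$-sum in $j$ collapses to $\sup_{j\in\no}$, and the prefactor $2^{\vk|\beta|}$ reduces to $1$. Therefore
\[
\| \lambda \, | \, b^s_{\infty,\infty}(\rn)^0 \| \;=\; \sup_{\beta \in \nat^n_0,\; j\in\no,\; m\in\zn}\; 2^{js}\,|\lambda^\beta_{j,m}|,
\]
which coincides precisely with \eqref{2.47}. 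In other words, $b^s_{\infty,\infty}(\rn)^0 = \text{\cc}$.

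With these identifications the assertions of the corollary are verbatim transcriptions of the corresponding statements in Theorem~\ref{T2.4}: the representability \eqref{2.47a} together with the norm equivalence \eqref{2.48}, the explicit dual coefficients \eqref{2.49} built from the second quarkonial system $\Phi^\beta_{j,m}$, the canonical expansion \eqref{2.51}, and the equivalence \eqref{2.52}. Unconditional convergence in $S'(\rn)$ transfers without change. There is essentially no obstacle here; the only care required is the routine bookkeeping to confirm that the three $\ell^\infty$-suprema appear in the right order and that $\sigma^n_\infty=0$ makes $s>0$ the sharp quarkonial threshold in this special case.
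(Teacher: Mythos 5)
Your proposal is correct and follows exactly the paper's own route: the corollary is obtained as the special case of Theorem \ref{T2.4} with $A=B$, $p=q=\infty$, $\vk=0$, using $\Cc^s(\rn)=B^s_{\infty,\infty}(\rn)$ from \eqref{2.45}, $\sigma^n_\infty=0$, and the identification $b^s_{\infty,\infty}(\rn)^0=$ {\scriptsize $\Cc$}${}^s(\rn)$ via \eqref{2.46}, \eqref{2.47}. Your additional bookkeeping (checking the admissible range $\vk>-\ve$ and the collapse of the $\ell_p$- and $\ell_q$-sums to suprema) is the routine verification the paper leaves implicit.
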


\begin{proof}
This is a special case of Theorem \ref{T2.4} based on \eqref{2.45}.
\end{proof}

So far the arguments ensure that
the series in \eqref{2.28} and \eqref{2.32} converge unconditionally in $S'(\rn)$. But this can be improved if both $p<\infty$ 
and $q<\infty$. Let again $\ve >0$ be as in \eqref{2.15} and Proposition \ref{P2.1}. 

\begin{corollary}  \label{C2.7}
Let $0<p,q <\infty$ and let $s$ be as in \eqref{2.27}. Let $\vk > - \ve$. Then the series in \eqref{2.28} and \eqref{2.32} converge in $\As (\rn)$ unconditionally to $f$.
\end{corollary}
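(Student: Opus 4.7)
The plan is to show that the partial sums $f_F = \sum_{(\beta, j, m) \in F} \lambda^\beta_{j,m} k^\beta_{j,m}$ over finite index sets $F \subset \nat^n_0 \times \no \times \zn$ form a Cauchy net in the quasi--Banach space $\As (\rn)$. Completeness of $\As (\rn)$ then provides a limit $g$, and the continuous embedding $\As (\rn) \hra S' (\rn)$ together with the $S'$--convergence $f_F \to f$ already secured by Theorem \ref{T2.4} forces $g = f$. The hypothesis $p, q < \infty$ will be used only to upgrade the tail bound for the sequence quasi--norm $a^s_{p,q} (\rn)$ from mere boundedness to a genuine convergence--to--zero statement. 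Since \eqref{2.32} is a special case of \eqref{2.28}, it suffices to handle an arbitrary representation \eqref{2.28}.

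Fix $\vr$ with $\vk > \vr > - \ve$ and put $u = \min (1, p, q)$. As in Step 2 of the proof of Theorem \ref{T2.4}, $\{2^{-\vr |\beta|} k^\beta_{j,m} : j \in \no,\, m \in \zn\}$ is an admissible $(K, 0, C, d)$--atomic system, with constants uniform in $\beta$. Given finite sets $F \subset F' \subset \nat^n_0 \times \no \times \zn$, define $\mu^\beta_{j,m} = \lambda^\beta_{j,m}$ if $(\beta, j, m) \in F' \setminus F$ and $0$ otherwise. Applying Theorem \ref{T1.7} index--by--index in $\beta$ and combining with the $u$--triangle inequality in $\As (\rn)$ yields
\begin{\eq}
\| f_{F'} - f_F \, | \As (\rn) \|^u \le c \sum_{\beta \in \nat^n_0} 2^{\vr |\beta| u} \, \| \mu^\beta \, | a^s_{p,q} (\rn) \|^u.
\end{\eq}

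Given $\epsilon > 0$, I would split the $\beta$--sum at a level $B$ to be chosen. For $|\beta| > B$ the trivial bound $\| \mu^\beta \, | a^s_{p,q} (\rn) \| \le \| \lambda^\beta \, | a^s_{p,q} (\rn) \| \le 2^{-\vk |\beta|} \| \lambda \, | a^s_{p,q} (\rn)^\vk \|$ from Definition \ref{D2.2} controls the contribution by $c \, \| \lambda \, | a^s_{p,q} (\rn)^\vk \|^u \sum_{|\beta| > B} 2^{(\vr - \vk) |\beta| u}$, which is smaller than $\epsilon^u / 2$ for $B$ large, uniformly in $F \subset F'$, because $\vr < \vk$. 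For the remaining finitely many $\beta$ with $|\beta| \le B$, the hypothesis $p, q < \infty$ ensures that finitely supported sequences are dense in $a^s_{p,q} (\rn)$, so I can pick a finite $F_\beta \subset \no \times \zn$ such that the $a^s_{p,q} (\rn)$--quasi--norm of the tail $\{\lambda^\beta_{j,m} : (j, m) \notin F_\beta\}$ is so small that its contribution is below $\epsilon^u / (2 N_B)$, where $N_B = \#\{\beta : |\beta| \le B\}$. Setting $F_0 = \bigcup_{|\beta| \le B} \{\beta\} \times F_\beta$, every finite $F \supset F_0$ and $F' \supset F$ satisfies $\| f_{F'} - f_F \, | \As (\rn) \| < \epsilon$, which is the Cauchy criterion.

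The technical point I would examine most carefully is the density of finitely supported sequences in $f^s_{p,q} (\rn)$ when $p, q < \infty$; unlike the $b$--case this does not reduce to separate $\ell_p$ and $\ell_q$ density, but follows from dominated convergence, since the truncated integrand $\bigl( \sum_{(j, m) \notin F_\beta} 2^{jsq} |\lambda^\beta_{j,m} \chi_{j,m} (\cdot) |^q \bigr)^{1/q}$ is pointwise dominated by the full integrand, which by assumption lies in $L_p (\rn)$. Beyond this, the argument is a routine reorganisation of the estimates already assembled in the proof of Theorem \ref{T2.4}.
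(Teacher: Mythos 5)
Your proposal is correct and fills in, with full detail, exactly the two ingredients the paper's one-line proof names: the exponential decay in $\beta$ obtained by shrinking $\vk$ to some $\vr > -\ve$, and Lebesgue's dominated convergence theorem for $L_p (\rn)$, $p < \infty$, to make the $(j,m)$--tails small at each fixed $\beta$ in the $F$--case. Assembling these into a Cauchy--net argument in the complete $u$--Banach space $\As (\rn)$ and identifying the limit via $\As (\rn) \hra S' (\rn)$ is the "standard argument" the paper gestures at, so you are on the same route, just written out.
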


\begin{proof}
The assertion follows by standard arguments, Lebesgue's bounded convergence theorem for $L_p (\rn)$, $0<p<\infty$, in the case of the
$F$--spaces, and that $\vk > -\ve$ is at our disposal (or the replacement of $\ell_\infty$ by $\ell_r$, $0<r<\infty$, as indicated in
Remark \ref{R2.5}).
\end{proof}

\begin{remark}   \label{R2.8}
In other words, \eqref{2.32} is a {\em frame representation} in $\As (\rn)$ if $0<p,q<\infty$ and $s$ as in \eqref{2.27}. One should be
aware that the two systems \eqref{2.13} and \eqref{2.14} (with their indicated individual properties) are totally independent to each
other. This follows from the arguments in
\cite[(3.110), (3.111), p.\,167]{T06}. In particular, \eqref{2.32}, \eqref{2.51} are frame representations
and \eqref{2.28}, \eqref{2.47a} are not uniquely determined, in contrast, for example, to wavelet expansions according to Theorem
\ref{T1.12}. 
On the other hand, it is
quite clear that $\beta =0$ is the dominant term. This follows from the strong decay of the elements both in \eqref{2.13} and 
\eqref{2.14} with respect to $\beta$. This has already been used above in related estimates and will be also later on of great service
for us.
\end{remark}

\subsection{Representations: negative smoothness}    \label{S2.3}
The representability \eqref{2.32} for the spaces $\As (\rn)$ covered by Theorem \ref{T2.4}  has been taken over from \cite{T06}.
By \cite[Theorem 3.26, p.\,172]{T06} there is a related counterpart for some spaces $\As (\rn)$ with $s<0$. But we wish to have a fresh
look at this topic adapted to our later needs. We rely on the duality
\begin{\eq}   \label{2.53}
\Cc^s (\rn) = B^s_{\infty, \infty} (\rn) = B^{-s}_{1,1} (\rn)', \qquad s \in \real,
\end{\eq}
in the framework of the dual pairing  $\big( S(\rn), S'(\rn) \big)$, \cite[Theorem 2.11.2, p.\,178]{T83}. Then
\begin{\eq}   \label{2.54}
f_l \to f, \qquad l \to \infty, \quad \text{in the weak*--topology of $\Cc^s (\rn)$}
\end{\eq}
for $\{ f_l \} \subset \Cc^s (\rn)$ and $f\in \Cc^s (\rn)$ means
\begin{\eq}  \label{2.55}
( f_l, g ) \to (f,g) \qquad \text{for any $g\in B^{-s}_{1,1} (\rn)$}.
\end{\eq}
Let $\{ k^\beta_{j,m} \}$ and $\{ \Phi^\beta_{j,m} \}$ be the quarkonial systems as introduced in \eqref{2.13} and \eqref{2.14}.

\begin{proposition}   \label{P2.9}
Let $s<0$. Then $f \in \Cc^s (\rn)$ can be represented in the weak*--topology of $\Cc^s(\rn)$ as
\begin{\eq}   \label{2.56}
f = \sum_{\substack{\beta \in \nat^n_0, j\in \no, \\ m \in \zn}} 2^{jn}\,\big( f, k^\beta_{j,m} \big) \Phi^\beta_{j,m}.
\end{\eq}
\end{proposition}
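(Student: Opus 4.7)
The plan is to deduce Proposition \ref{P2.9} by duality from Theorem \ref{T2.4} applied not to $f$ itself but to a generic test element of the predual.

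First I would invoke the duality \eqref{2.53} together with the description of weak*-convergence in \eqref{2.55}: to establish the assertion, it suffices to verify, for every $g \in B^{-s}_{1,1} (\rn)$, the numerical identity
\[
(f, g) = \sum_{\substack{\beta \in \nat^n_0, j\in \no, \\ m \in \zn}} 2^{jn} \, \big( f, k^\beta_{j,m} \big) \big( \Phi^\beta_{j,m}, g \big),
\]
with unconditional convergence on the right-hand side as the test element $g$ is varied.

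Second, since $-s > 0 = \sigma^n_1 = \sigma^n_{1,1}$, the predual space $B^{-s}_{1,1} (\rn)$ meets the hypotheses of Theorem \ref{T2.4} with $p = q = 1$ and $\vk = 0 > -\ve$. Applying that theorem — with the roles of analysis and synthesis systems exchanged compared to what we wish to prove — produces the quarkonial expansion
\[
g = \sum_{\substack{\beta \in \nat^n_0, j\in \no, \\ m \in \zn}} 2^{jn} \, \big( g, \Phi^\beta_{j,m} \big)\, k^\beta_{j,m}.
\]
The crucial upgrade then comes from Corollary \ref{C2.7}: with $p = q = 1 < \infty$, this series converges to $g$ unconditionally in the norm of $B^{-s}_{1,1} (\rn)$, not merely in $S'(\rn)$.

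Third, because $f \in \Cc^s (\rn) = B^{-s}_{1,1} (\rn)'$ is a bounded linear functional on $B^{-s}_{1,1} (\rn)$, I can apply it term by term to the norm-convergent expansion of $g$. After interchanging the two scalar factors in each summand, this yields exactly the desired identity, and the unconditional convergence in $B^{-s}_{1,1} (\rn)$ transfers through $f$ to unconditional scalar convergence. The only delicate step — the one I would flag as the main obstacle were one to take a less structured route — is ensuring genuine norm convergence, rather than only $S'(\rn)$-convergence, of the quarkonial series for $g$; without it the interchange of $f$ with the summation is not justified. This is precisely why invoking Corollary \ref{C2.7}, rather than only the $S'$-convergence supplied by Theorem \ref{T2.4}, is indispensable. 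With this in hand the argument becomes a purely formal consequence of duality and no further quantitative estimates are required.
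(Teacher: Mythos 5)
Your proposal is correct and coincides with the paper's own argument: the paper likewise uses the duality \eqref{2.53}, expands $g\in B^{-s}_{1,1}(\rn)$ by the unconditionally norm-convergent series \eqref{2.57} supplied by Corollary \ref{C2.7} (with $p=q=1$, $-s>0=\sigma^n_1=\sigma^n_{1,1}$), and then applies the functional $f$ term by term as in \eqref{2.58}. Your explicit flagging of the norm (rather than merely $S'$) convergence as the step legitimizing the interchange is exactly the role Corollary \ref{C2.7} plays in the paper's proof.
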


\begin{proof}
According to Corollary \ref{C2.7} one can expand $g\in B^{-s}_{1,1} (\rn)$ by the unconditionally converging series
\begin{\eq}   \label{2.57}
g = \sum_{\substack{\beta \in \nat^n_0, j\in \no, \\ m \in \zn}} 2^{jn}\,\big( g, \Phi^\beta_{j,m} \big) k^\beta_{j,m}.
\end{\eq}
Let $f\in \Cc^s (\rn)$. Then one has
\begin{\eq}   \label{2.58}
(f,g) = \Big( f, \sum_{\beta,j,m} 2^{jn} \big(g, \Phi^\beta_{j,m} ) k^\beta_{j,m} \Big) 
= \Big( \sum_{\beta,j,m} 2^{jn} \big( f, k^\beta_{j,m} \big) \Phi^\beta_{j,m}, g \Big)
\end{\eq}
for any $g \in B^{-s}_{1,1} (\rn)$. This proves the proposition.
\end{proof}

In particular, it follows from well--known embeddings that any 
\begin{\eq}   \label{2.59}
f \in \As (\rn) \quad \text{with} \quad  A \in \{B,F \}, \quad s\in \real \quad \text{and} \quad 0<p,q \le \infty
\end{\eq}
can be represented as
\begin{\eq}   \label{2.60}
f = \sum_{\substack{\beta \in \nat^n_0, j\in \no, \\ m \in \zn}} 2^{jn}\,\big( f, k^\beta_{j,m} \big) \Phi^\beta_{j,m},
\end{\eq}
unconditional convergence being in $S'(\rn)$. There are two questions.
\begin{itemize}
\item Can the expansion \eqref{2.60} be used to characterize the spaces $\As (\rn)$ to which $f$ belongs?
\item Can \eqref{2.60} be extended to all $f\in S'(\rn)$?
\end{itemize}

For the second question we have an affirmative answer. This will be done in Section \ref{S2.6}. For this purpose we extend in the
Sections \ref{S2.4} and \ref{S2.5}
the theory developed in the Sections \ref{S2.2}, \ref{S2.3} from $\As (\rn)$ to $\As (\rn,w)$ where $w$ is a so--called
admissible weight. As far as the first question is concerned we remarked already at the end of Section \ref{S2.1}
that the functions $\Phi^\beta_{j,m}$ in \eqref{2.21a} are universal 
$L_\infty$--normalized molecules in the spaces $\As (\rn)$ with $s<0$, whereas $2^{jn} k^\beta_{j,m}$ can be interpreted as kernels 
of related local means according to \cite[Theorem 1.15, p.\,7]{T08}.
This will be detailed in what follows. A direct extension of this theory to spaces $\As (\rn)$ with $s>0$ is
at least doubtful. But we describe later on in  in Corollary \ref{C2.13} a substitute. 

Let
\begin{\eq}   \label{2.61}
a^s_{p,q} (\rn)^{\vk} \quad \text{with $a\in \{b,f \}$, $s\in \real$ and $0<p,q \le \infty$},
\end{\eq}
($p < \infty$ for $f$--spaces) be the sequence spaces according to Definition \ref{D2.2}. Let $\ve >0$ be as in \eqref{2.15} and Proposition \ref{P2.1}.

\begin{theorem}   \label{T2.10}
Let  $0<p,q \le \infty$ $(p<\infty$ for $F$--spaces$)$, $s<0$ and $\vk < \ve$. Then $f\in S'(\rn)$ belongs to $\As (\rn)$ if, and only if, it can be represented as
\begin{\eq}   \label{2.62}
f = \sum_{\substack{\beta \in \nat^n_0, j\in \no, \\ m \in \zn}} \lambda^\beta_{j,m} \, \Phi^\beta_{j,m}, \qquad \lambda \in a^s_{p,q} (\rn)^{\vk},
\end{\eq}
unconditional convergence being in $S'(\rn)$. Furthermore,
\begin{\eq}   \label{2.63}
\| f \, | \As (\rn) \| \sim \inf \| \lambda \, | a^s_{p,q} (\rn)^{\vk} \|
\end{\eq}
where the infimum is taken over all  admissible representations \eqref{2.62}. Let
\begin{\eq}   \label{2.64}
\lambda^\beta_{j,m} (f) = 2^{jn} \, \big(f, k^\beta_{j,m} \big), \qquad f \in S' (\rn),
\end{\eq}
and
\begin{\eq}   \label{2.65}
\lambda (f) = \{ \lambda^\beta (f): \ \beta \in \nat^n_0 \}, \quad \lambda^\beta (f) = \{ \lambda^\beta_{j,m} (f): \ j \in \no, \ m\in
\zn \}.
\end{\eq}
Then $f \in \As (\rn)$ can be represented as
\begin{\eq}   \label{2.66}
f = \sum_{\substack{\beta \in \nat^n_0, j\in \no, \\ m \in \zn}} \lambda^\beta_{j,m}(f) \, \Phi^\beta_{j,m}, 
\end{\eq}
unconditional convergence being in $S'(\rn)$, and
\begin{\eq}   \label{2.67}
\| f \, | \As (\rn) \| \sim \| \lambda (f) \, | a^s_{p,q} (\rn)^{\vk} \|
\end{\eq}
$($equivalent quasi--norms$)$.
\end{theorem}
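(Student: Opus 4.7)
The plan is to mirror the architecture of the proof of Theorem \ref{T2.4}, with the roles of $k^\beta_{j,m}$ and $\Phi^\beta_{j,m}$ exchanged. Because $s<0$, Proposition \ref{P2.1}(ii) together with the full moment vanishing \eqref{2.22} qualifies $\Phi^\beta_{j,m}$ as \emph{molecules} in the sense of Remark \ref{R1.9}, whereas Proposition \ref{P2.1}(i) turns $2^{\vk|\beta|} k^\beta_{j,m}$, for $\vk<\ve$, into an $(K,0,C,d)$-atomic system (Definition \ref{D1.3}) uniformly in $\beta \in \nat^n_0$ and for every fixed $K \in \nat$. The first observation will drive the sufficiency direction; the second will drive the coefficient estimate.

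For the sufficiency direction and the upper bound in \eqref{2.63}, I will fix an arbitrary $\vk' \in \real$. By Proposition \ref{P2.1}(ii) and \eqref{2.22}, for each fixed $\beta$ the family $\{2^{\vk'|\beta|}\Phi^\beta_{j,m}: j\in\no,\, m\in\zn\}$ is a molecular system admissible for $\As(\rn)$ with $s<0$, with molecular constants independent of $\beta$. Remark \ref{R1.9} applied for each fixed $\beta$ then yields
\[
\Bigl\| \sum_{j,m} \lambda^\beta_{j,m} \Phi^\beta_{j,m} \,\big|\, \As(\rn) \Bigr\| \le c\, 2^{-\vk'|\beta|}\, \|\lambda^\beta \,|\, a^s_{p,q}(\rn)\|.
\]
Choosing $\vk' > -\vk$ and aggregating over $\beta$ with the $u$-triangle inequality ($u=\min(1,p,q)$), exactly as in Step 2 of the proof of Theorem \ref{T2.4}, will produce unconditional convergence of \eqref{2.62} in $S'(\rn)$ together with $\|f\,|\,\As(\rn)\| \le c'\,\|\lambda\,|\,a^s_{p,q}(\rn)^{\vk}\|$.

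For the coefficient estimate \eqref{2.67}, the slack $\vk<\ve$ combined with Proposition \ref{P2.1}(i) will ensure that $2^{\vk|\beta|}k^\beta_{j,m}$ are $L_\infty$-normalized smooth bumps, compactly supported in a fixed dilate of $Q_{j,m}$ and bounded uniformly in $\beta$ in every $C^K$-norm. Since $s<0$, no cancellations are required on these kernels, and the local-means characterization of \cite[Theorem 1.15, p.\,7]{T08} applies, uniformly in $\beta$, with $2^{jn}\cdot 2^{\vk|\beta|}k^\beta_{j,m}$ in the role of local-means kernels at scale $j$ and offset $m$. This will give
\[
\bigl\| \bigl\{2^{\vk|\beta|}\lambda^\beta_{j,m}(f)\bigr\}_{j,m} \,\big|\, a^s_{p,q}(\rn) \bigr\| \le c\, \|f\,|\,\As(\rn)\|
\]
uniformly in $\beta$, and the supremum over $\beta$ yields \eqref{2.67}. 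The explicit representation \eqref{2.66} will then follow by combining Proposition \ref{P2.9} and its extension \eqref{2.60} (which already give equality in $S'(\rn)$) with the above bound and the sufficiency step: the series on the right of \eqref{2.66} converges unconditionally in $S'(\rn)$ to some $\tilde f \in \As(\rn)$, and \eqref{2.60} forces $\tilde f = f$.

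The main obstacle will be the passage from the standard local-means theorem of \cite{T08}, formulated for a fixed Tauberian pair of kernels, to the multi-kernel version encoded by $\{k^\beta_{j,m}\}_{\beta,j,m}$ with constants independent of $\beta$. Proposition \ref{P2.1}(i), which exploits precisely the slack $\vk<\ve$ built into the support condition \eqref{2.15}, supplies the uniform bounds needed for this uniformization; everything else will be the bookkeeping analogous to the positive-smoothness case of Theorem \ref{T2.4}.
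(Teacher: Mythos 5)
Your proposal is correct and follows essentially the same route as the paper's own proof of Theorem~\ref{T2.10}. The paper's Step~1 treats $\{2^{-\vr|\beta|}\Phi^\beta_{j,m}\}$ with $\vr < \vk$ as a $\beta$-uniform family of $L_\infty$-normalized molecules (using \eqref{2.22} and Proposition~\ref{P2.1}(ii) via Remark~\ref{R1.9}) and aggregates with the $u$-triangle inequality exactly as in \eqref{2.69}, which is your sufficiency step up to the sign convention $\vk' = -\vr$; its Step~2 reads $\lambda^\beta_{j,m}(f) = 2^{jn}(f, k^\beta_{j,m})$ as local means via \cite[Theorem 1.15]{T08}, with the slack $\vk < \ve$ feeding into the $\beta$-uniform estimate \eqref{2.70} through \eqref{2.16}, exactly as you describe; and it closes by invoking \eqref{2.60} for the representability, just as you do.
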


\begin{proof}
{\em Step 1.} Let $f$ be given by \eqref{2.62}. We argue similarly as in Step 2 of the proof of Theorem \ref{T2.4}, but change the 
roles played by $\Phi^\beta_{j,m}$ and $k^\beta_{j,m}$. In particular according to the end of Section \ref{S2.1},
\begin{\eq}   \label{2.68}
\Big\{ 2^{-\vr |\beta|} \Phi^\beta_{j,m}: \ j\in \no, \ m\in \zn \Big\}, \qquad \beta \in \nat^n_0,
\end{\eq}
with $\vr < \vk$ are $L_\infty$--normalized universal molecules in all spaces $\As (\rn)$ with $s<0$. Likewise one could break them as
in \eqref{2.40}, \eqref{2.41} into related atoms having sufficiently many moment conditions. Otherwise one can argue similarly as in
\eqref{2.35} ensuring $f\in \As (\rn)$  as a consequence of
\begin{\eq}   \label{2.69}
\| f \, | \As (\rn) \|^{u} \le c \, \sum_{\beta \in \nat^n_0} 2^{\vr |\beta| u} \, \| \lambda^\beta \, | a^s_{p,q} (\rn) \|^{u} 
\le c' \, \| \lambda \, | a^s_{p,q} (\rn)^{\vk} \|^{u}.
\end{\eq}
{\em Step 2.} As mentioned in \eqref{2.59}, \eqref{2.60} any $f\in \As (\rn)$ can be represented as \eqref{2.66} with \eqref{2.64}. 
For fixed $\beta \in \nat^n_0$ we interpret the coefficients $\lambda^\beta_{j,m} (f)$ of $\lambda^\beta (f)$ as local means according
to \cite[Theorem 1.15, p.\,7]{T08} for the above spaces $\As (\rn)$ with $s<0$. 
No moment conditions are needed. Using $\vk < \ve$ one has by \eqref{2.16} that
\begin{\eq}   \label{2.70}
2^{\vk |\beta|} \, \| \lambda^\beta (f) \, | a^s_{p,q} (\rn) \| \le c \, \|f \, | \As (\rn) \|, \qquad \beta \in \nat^n_0,
\end{\eq}
uniformly in $\beta$. Then both \eqref{2.63} and \eqref{2.67} follow from \eqref{2.69}, \eqref{2.70}.
\end{proof}

It is quite clear that $\beta =0$ is the main term in \eqref{2.61} and \eqref{2.67}, based on Definition \ref{D2.2}. However in
the representation \eqref{2.62} one needs all $\beta \in \nat^n_0$. But one may ask whether the equivalence \eqref{2.67} remains valid
if one replaces the right--hand side by its main term, which means
\begin{\eq}   \label{2.71}
\| f \, | \As (\rn) \| \sim \| \lambda^0 (f) \, | a^s_{p,q} (\rn) \|
\end{\eq}
with $\lambda^0 (f)$ as in \eqref{2.65} and
$a^s_{p,q} (\rn)$ as in Definition \ref{D1.5}, Remark \ref{R1.6}. This can be done under some mild additional restrictions for the function $k$ as introduced in \eqref{2.1}, \eqref{2.2}. Let
\begin{\eq}   \label{2.72}
k(t,f)(x) = t^{-n} \int_{\rn} k \Big( \frac{y-x}{t} \Big) \, f(y) \, \di y, \qquad x\in \rn, \quad t>0,
\end{\eq}
be the continuous counterpart of the above local means \eqref{2.64}
as considered in \cite[Section 1.4]{T06}. In particular one has by \eqref{2.64}, \eqref{2.65} and \eqref{2.4} that
\begin{\eq}   \label{2.73}
\begin{aligned}
k(2^{-j}, f) (2^{-j}m) &= 2^{jn} \int_{\rn} k(2^jy -m) \, f(y) \, \di y \\
&= 2^{jn} \, (f, k^0_{j,m} ) \\
&= \lambda^0_{j,m} (f), \qquad j\in \no, \quad m\in \zn.
\end{aligned}
\end{\eq}
Let $a^s_{p,q} (\rn)$ be as in Definition \ref{D1.5}, Remark \ref{R1.6}.

\begin{proposition}  \label{P2.11}
Let $s<0$ and $0<p,q \le \infty$ $(p<\infty$ for $F$--spaces$)$. Let
\begin{\eq}  \label{2.74}
\lambda^0 (f) = \{ \lambda^0_{j,m} (f): \ j\in \no, \ m\in \zn \}
\end{\eq}
be based on \eqref{2.73} with $k$ as above and $|\nabla k(x)| \le \delta$ for some $\delta >0$. There is a number $\delta_0 >0$ such
that for all $0<\delta \le \delta_0$,
\begin{\eq}   \label{2.75}
\| f \, | \As (\rn) \| \sim \| \lambda^0 (f) \, | a^s_{p,q} (\rn) \|
\end{\eq}
$($equivalent quasi--norms$)$.
\end{proposition}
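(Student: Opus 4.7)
The strategy is to combine the known continuous local means characterisation of $\As(\rn)$ for $s<0$ with a sampling argument that exploits the small-gradient hypothesis $|\nabla k|\le\delta$ to pass from the continuous $L_p$-norms of $k(2^{-j},f)$ to the discrete $\ell_p$-norms of the samples $k(2^{-j},f)(2^{-j}m) = \lambda^0_{j,m}(f)$.

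The easy direction $\|\lambda^0(f)\,|\,a^s_{p,q}(\rn)\| \le c\,\|f\,|\,\As(\rn)\|$ is the $\beta=0$ specialisation of \eqref{2.70} from the proof of Theorem \ref{T2.10}, which itself is a direct application of the local means assertion \cite[Theorem 1.15, p.\,7]{T08} to the $(K,0,C,d)$-atomic kernels $k^0_{j,m}$; no cancellation is needed because $s<0$.

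For the converse I first observe that $k$ satisfies a Tauberian condition: integrating \eqref{2.2} over the unit cube gives $\int_{\rn}k(x)\,\di x=1$, hence $\wh{k}(0)\neq 0$ and consequently $\wh{k}(\xi)\neq 0$ in a neighbourhood of the origin. The continuous local means theory of \cite[Section 1.4]{T06} then yields the equivalent quasi-norm
\begin{equation*}
\|f\,|\,\Bs(\rn)\| \sim \Big(\sum_{j=0}^\infty 2^{jsq}\|k(2^{-j},f)\,|\,L_p(\rn)\|^q\Big)^{1/q}
\end{equation*}
and the analogous assertion for $\Fs(\rn)$ with the norm taken in $L_p(\rn,\ell^s_q)$. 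What remains is to replace the continuous $L_p$-norms of $k(2^{-j},f)$ by the sampled quantity $2^{-jn/p}\bigl(\sum_m|\lambda^0_{j,m}(f)|^p\bigr)^{1/p}$.

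For the discretisation step I fix $j\in\no$, $m\in\zn$, $x\in Q_{j,m}$ and interpret both $k(2^{-j},f)(x)$ and $k(2^{-j},f)(2^{-j}m)$ as pairings of $f$ with the compactly supported smooth kernels $y\mapsto 2^{jn}k(2^j(y-x))$ and $y\mapsto 2^{jn}k(2^j(y-2^{-j}m))$ respectively. Subtracting and applying the mean value theorem together with $|\nabla k|\le\delta$, the difference is a pairing of $f$ with a kernel of $L_\infty$-norm at most $c\,\delta\cdot 2^{jn}$ supported in a set of measure $\le c\,2^{-jn}$. Treating this perturbation as $c\,\delta$ times a normalised molecule without cancellation (legitimate for $s<0$) and invoking the Peetre maximal function estimate from the proof of \cite[Theorem 1.15]{T08}, one obtains the pointwise bound
\begin{equation*}
\big|k(2^{-j},f)(x)-k(2^{-j},f)(2^{-j}m)\big| \le c\,\delta\cdot (k^* f)_j(x), \qquad x\in Q_{j,m},
\end{equation*}
where $(k^*f)_j$ is a Peetre maximal function whose $L_p(\rn,\ell^s_q)$-norm (resp.\ $\ell^s_q(L_p(\rn))$-norm in the $B$-case) is equivalent to $\|f\,|\,\As(\rn)\|$. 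Integrating over $Q_{j,m}$, summing in $j$ and $m$, and substituting into the continuous local means characterisation yields
\begin{equation*}
\|f\,|\,\As(\rn)\| \le C\,\|\lambda^0(f)\,|\,a^s_{p,q}(\rn)\| + C\,\delta\,\|f\,|\,\As(\rn)\|,
\end{equation*}
whence \eqref{2.75} follows by absorption whenever $\delta\le\delta_0:=1/(2C)$.

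The main obstacle is the pointwise Peetre-type estimate. The perturbation kernel $2^{jn}[k(2^j(y-x))-k(2^j(y-2^{-j}m))]$ has no useful cancellation, so one must check carefully that its only smallness comes from the factor $\delta$ and that, nevertheless, it is supported at scale $2^{-j}$ around $2^{-j}m$ and produces a local mean controlled by the maximal function of $k(2^{-j},f)$ with constants uniform in $j$ and $m$. The hypothesis $\delta\le\delta_0$ enters exactly here, because otherwise the resulting error term would fail to be absorbable.
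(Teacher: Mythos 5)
Your proposal is correct and reconstructs exactly the strategy the paper outlines in Remark \ref{R2.12} but delegates to \cite{Win95} and \cite[Corollary~1.12, p.\,11]{T06}: the continuous local means characterization of $\As(\rn)$, $s<0$, with Peetre maximal functions, followed by discretization through the mean value theorem — note $\nabla_x k(2^{-j},f)(x)=-2^{j}(\nabla k)(2^{-j},f)(x)$, so $|\nabla k|\le\delta$ gives the pointwise perturbation bound $|k(2^{-j},f)(x)-k(2^{-j},f)(2^{-j}m)|\le c\,\delta\,(k^{*}f)_{j}(x)$ on $Q_{j,m}$ — and absorption. Two cosmetic slips: the sub-mean-value estimate you need is the one behind the continuous local means of \cite[Section~1.4]{T06} rather than \cite[Theorem~1.15]{T08}, and the perturbation kernel $2^{jn}\bigl[k(2^{j}(\cdot-x))-k(2^{j}(\cdot-2^{-j}m))\bigr]$ acts as a (non-cancelling) kernel of local means rather than a molecule, its control by $(k^{*}f)_{j}$ resting on $\wh{k}(0)\neq 0$ and the fact that no moment conditions are required when $s<0$.
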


\begin{remark}   \label{R2.12}
This assertion remains valid without the additional
assumption $|\nabla k(x)| \le \delta$ if one replaces $k(2^{-j}, f)(2^{-j}m)$ in \eqref{2.73}
by $k(2^{-j}, f)(r 2^{-j}m)$ according to \eqref{2.72} with $0<r \le r_0$ and some sufficiently small $r_0$. 
Both versions are essentially covered  by \cite[Theorem 4, pp.\,16--17]{Win95}.
Basically the proofs rely on the characterization of the spaces $\As (\rn)$ with $s<0$ by the above local means and related maximal
functions as described in \cite[Corollary 1.12, p.\,11]{T06}. An explicit formulation, again with a reference to \cite{Win95}, for the
spaces $B^s_p (\rn) = B^s_{p,p} (\rn)$, $0<p \le \infty$, $s<0$, may be found in \cite[Remark 3.28, p.\,174]{T06}. In other words, if 
one wishes to replace \eqref{2.67} by \eqref{2.75} then one needs some mild  extra conditions for the basic function $k$.
\end{remark}

Although the representation \eqref{2.59}, \eqref{2.60} applies to all spaces $\As (\rn)$ one cannot expect that Theorem \ref{T2.10} can be extended to spaces $\As (\rn)$ with $s \ge 0$. According to \cite[Theorem 1.15, p.\,7]{T08} (and the references given there) a
characterization of spaces $\As (\rn)$ with $s \ge 0$ in terms of local means requires moment conditions for the related kernels. But
the functions $k^\beta_{j,m}$ in \eqref{2.3}, \eqref{2.4} are non--negative. However some rescue comes from the following observation: Let $f \in\As (\rn)$ where
\begin{\eq}   \label{2.76}
A \in \{B,F \}, \qquad s \in \real \quad \text{and} \quad 0<p,q \le \infty.
\end{\eq}
Let $N \in \no$. Then
\begin{\eq}   \label{2.77}
\| f \, | \As (\rn) \| \sim \sup_{0 \le |\alpha| \le N} \| D^\alpha f \, | A^{s-N}_{p,q} (\rn) \|
\end{\eq}
(equivalent quasi--norms). If $p<\infty$ for the $F$--spaces then this is a very classical assertion is covered by 
\cite[Theorem 2.3.8, pp.\,58--59]{T83}. But it remains also valid for
$F^s_{\infty,q} (\rn)$, \cite[Theorem 1.24, p.\,17]{T20} (however these  spaces are not considered here in detail). Theorem \ref{T2.10}
can now be complemented as follows. We use the same notation as there.

\begin{corollary}   \label{C2.13}
Let
\begin{\eq}    \label{2.78}
A \in \{B,F \}, \quad -\infty <s<N \in \no \quad \text{and} \quad 0<p,q \le \infty
\end{\eq}
$(p<\infty$ for $F$--spaces$)$. Let $\vk < \ve$. Then any $f\in \As (\rn)$ can be represented as 
\begin{\eq}   \label{2.79}
f = \sum_{\substack{\beta \in \nat^n_0, j\in \no, \\ m \in \zn}} 2^{jn} \big( f, k^\beta_{j,m} \big) \Phi^\beta_{j,m}, 
\end{\eq}
such that
\begin{\eq}   \label{2.80}
\| f \, | \As (\rn) \| \sim \sup_{0 \le |\alpha| \le N} \| \lambda (D^\alpha f) \, | a^{s-N}_{p,q} (\rn)^{\vk} \|
\end{\eq}
$($equivalent quasi--norms$)$.
\end{corollary}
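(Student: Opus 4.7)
The plan is to combine the classical lifting identity \eqref{2.77} with Theorem \ref{T2.10} applied at the negative--smoothness level $s-N<0$, which is exactly what the hypothesis $s<N$ is designed to guarantee. The representation part of the claim comes essentially for free: for any $f \in \As (\rn)$ the expansion \eqref{2.79} has already been secured in \eqref{2.59}--\eqref{2.60} as a consequence of Proposition \ref{P2.9} combined with the standard embeddings that reach all spaces $\As (\rn)$. So only the norm equivalence \eqref{2.80} needs actual work.

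To establish the equivalence, first I invoke \eqref{2.77} to reduce the $\As (\rn)$--quasi--norm of $f$ to the finite supremum over $|\alpha| \le N$ of the $A^{s-N}_{p,q} (\rn)$--quasi--norms of the derivatives $D^\alpha f$. Each such $D^\alpha f$ lies in $A^{s-|\alpha|}_{p,q} (\rn) \hra A^{s-N}_{p,q} (\rn)$, and since $s-N<0$ and $\vk < \ve$, Theorem \ref{T2.10} applies directly and gives
$$\| D^\alpha f \, | A^{s-N}_{p,q} (\rn) \| \sim \| \lambda (D^\alpha f) \, | a^{s-N}_{p,q} (\rn)^{\vk} \|,$$
where $\lambda (D^\alpha f)$ is built from the local--mean coefficients $2^{jn} \, (D^\alpha f, k^\beta_{j,m})$ as prescribed in \eqref{2.64}--\eqref{2.65}. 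Chaining these two equivalences and taking the supremum over $|\alpha| \le N$ delivers \eqref{2.80}.

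The main subtlety I anticipate is not the chain of equivalences itself but ensuring that the constants produced by Theorem \ref{T2.10} are uniform across the finitely many multi--indices $\alpha$ with $|\alpha| \le N$. Since those constants depend only on the parameters $s-N$, $p$, $q$, $n$, $\vk$ and on the fixed base function $k$ (none of which vary with $\alpha$), the uniformity is automatic and the finite supremum poses no difficulty. In summary, the corollary is a clean packaging of the negative--smoothness quarkonial theorem together with the classical derivative--lifting identity; the only step that a careful write--up really has to track is the bookkeeping which confirms that no parameter drifts when one passes from $f$ to the family $\{D^\alpha f\}_{|\alpha| \le N}$.
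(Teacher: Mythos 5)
Your proof is correct and follows exactly the paper's argument: the corollary is obtained by combining the derivative--lifting equivalence \eqref{2.77} with Theorem \ref{T2.10} applied to each $D^\alpha f \in A^{s-N}_{p,q}(\rn)$, the hypothesis $s<N$ being precisely what makes $s-N<0$ and hence Theorem \ref{T2.10} applicable. The extra remarks on the representability via \eqref{2.59}--\eqref{2.60} and on the uniformity of constants over the finitely many $\alpha$ are accurate and consistent with the paper's one--line proof.
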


\begin{proof}
This assertion follows from \eqref{2.77} and Theorem \ref{T2.10} applied to $D^\alpha f \in A^{s-N}_{p,q} (\rn)$.
\end{proof}

\begin{remark}   \label{R2.14}
By \eqref{2.64} one has
\begin{\eq} \label{2.81}
\lambda^\beta_{j,m} (D^\alpha f) = 2^{jn} \big( D^\alpha f, k^\beta_{j,m} \big) = (-1)^{|\alpha|} 2^{jn} \big( f, D^\alpha k^\beta_{j,m}\big).
\end{\eq}
At least $D^\alpha k^\beta_{j,m} \in D(\rn) = C^\infty_0 (\rn)$ with $|\alpha| =N$
are kernels of related local means for $\As (\rn)$
with sufficiently many moment conditions as requested in \cite[Theorem 1.15, p.\,7]{T08}. The
outcome is satisfactory. It clarifies under which circumstances $f\in S'(\rn)$ represented according to \eqref{2.79}
belongs to $\As (\rn)$. Furthermore Theorem \ref{T2.10} and Corollary \ref{C2.13} complement the theory of local means in the
spaces $\As (\rn)$ as described in \cite[Section 1.4, pp.\,9--12]{T06} based on related references. There we preferred continuous
versions with $\ell^s_q \big( L_p (\rn) \big)$ in place of $b^s_{p,q}(\rn)$ and $L_p \big( \rn, \ell^s_q \big)$ in place of $f^s_{p,q} (\rn)$ with $a^s_{p,q} (\rn)$ as in Definition \ref{D1.5}, Remark \ref{R1.6}.
But as already indicated in Remark \ref{R2.12} the characterizations  in
\cite{T06} in terms of maximal functions allow to step from corresponding continuous versions to their related discrete counterparts 
used above. We refer in this context again to \cite[Chapter 2]{Win95}.
\end{remark}

\begin{problem}    \label{P2.15}
One may ask whether the above considerations can be extended to other classes of distinguished spaces. As far as  isotropic spaces
are concerned one may think about diverse types of Morrey smoothness spaces which are in common use. This applies especially to the
spaces
\begin{\eq}   \label{2.82}
\Lambda^{\vr} \As(\rn) \quad \text{and} \quad \Lambda_{\vr} \As (\rn) \qquad \text{with} \quad A\in \{B,F \}
\end{\eq}
and
\begin{\eq}    \label{2.83}
s\in \real, \quad 0<p<\infty, \quad 0<q \le \infty, \qquad -n <\vr <0,
\end{\eq}
using the notation as introduced in \cite{HaT22}. The best possible extension of Theorem \ref{T2.4} from $\As (\rn)$ to the spaces in
\eqref{2.82} is the representation
\begin{\eq}   \label{2.84}
f = \sum_{\substack{\beta \in \nat^n_0, j\in \no, \\ m \in \zn}} \lambda^\beta_{j,m} (f) \, k^\beta_{j,m}, 
\end{\eq}
where $\lambda = \{\lambda^\beta_{j,m} \}$ belongs to suitable sequence spaces,
\begin{\eq}   \label{2.85}
s > \sigma^{|\vr|}_p = |\vr| \Big( \max \big( \frac{1}{p}, 1 \big) -1 \Big) \qquad \text{for $B$--spaces} 
\end{\eq}
(bold) and
\begin{\eq}   \label{2.86}
s > \sigma^{|\vr|}_{p,q} = |\vr| \Big( \max \big( \frac{1}{p}, \frac{1}{q}, 1 \big) - 1 \Big) \quad \text{for $F$--spaces}
\end{\eq}
(very bold). The restriction for $s$ in \eqref{2.27} comes from the needed moment conditions in the atomic representation Theorem
\ref{T1.7}. As far as we know there is no related counterpart for the spaces in \eqref{2.82}, \eqref{2.83} with $\sigma^{|\vr|}_p$
in place of $\sigma^n_p$ and $\sigma^{|\vr|}_{p,q}$ in place of $\sigma^n_{p,q}$. We proved in \cite[Theorem 3.33, pp.\,67--68]{T14}
and \cite[Theorem 1.37, pp.\,28--32]{T13} a weaker version for the spaces $\Lambda^{\vr} \As (\rn)$ by reduction to Theorem \ref{T1.7}
with  the same restriction as in \eqref{1.31} and \eqref{1.34}. In addition one has to check whether the other ingredients in the proof
of Theorem \ref{T2.4} have related counterparts for the spaces in \eqref{2.82}, \eqref{2.83}. Similarly one may ask for a counterpart
of Theorem \ref{T2.10} for the spaces in \eqref{2.82}, \eqref{2.83} with $s<0$. As will be proved in Section \ref{S2.6} any $f\in
S'(\rn)$ can be expanded in $S'(\rn)$ according to \eqref{2.66}. This applies in particular to the spaces in \eqref{2.82}, 
\eqref{2.83}. In other words, one asks for a counterpart of \eqref{2.67}. According to \cite[(1.71), p.\,13]{T20} (and the references
given there) one has
\begin{\eq}   \label{2.87}
\Lambda^0 \Fs (\rn) = F^s_{\infty,q} (\rn), \qquad 0<q < \infty, \quad s\in \real,
\end{\eq}
$0<p<\infty$, for the spaces introduced in Definition \ref{D1.1}(iii). This shows that an extension of the above representation
theory for the spaces $\As (\rn)$ to the spaces in \eqref{2.82}, \eqref{2.83} covers also the fashionable spaces $F^s_{\infty,q} (\rn),
$ $0<q< \infty$, including the spaces $\mathrm{bmo}^s (\rn) = F^s_{\infty,2} (\rn)$ of bounded mean oscillation. Furthermorre as already mentioned in Section \ref{S2.1} 
one may assume that all functions in the systems \eqref{2.13}, \eqref{2.14},
\begin{\eq}   \label{2.87a}
\big\{ k^\beta_{j,m}, \ \Phi^\beta_{j,m}: \ \beta \in \nat^n_0, \ j\in \no, \ m\in \zn \big\}
\end{\eq}
are created from underlying basic functions having
a product structure with respect to $x= (x_1, \ldots, x_n )$. This paves the way for a corresponding theory for the spaces
$S^r_{p,q} A (\rn)$ with dominating mixed smoothness as treated in \cite{ST87} and \cite{T19}. 
\end{problem}

\subsection{Weighted spaces}    \label{S2.4}
As indicated in Section \ref{S1.1}
at the end of the Introduction we wish to extend the theory developed in the preceding sections to  universal quarkonial
expansions in $S(\rn)$ and $S'(\rn)$. We reduce this task to corresponding assertions for some weighted spaces.

Let $0<w(x) \in C^\infty (\rn)$ be a weight function such that
\begin{\eq}   \label{2.88}
|D^\gamma w(x) | \le c_\gamma w(x), \qquad x\in \rn, \quad \gamma \in \nat^n_0,
\end{\eq}
and
\begin{\eq}    \label{2.89}
w(x) \le c \, w(y) \, \big( 1 + |x-y|^2 \big)^{\alpha/2}, \qquad x\in \rn, \quad y\in \rn,
\end{\eq}
for some $c_\gamma >0$, $c>0$ and $\alpha \ge 0$. Let $L_p (\rn, w)$, $0<p \le \infty$, be the weighted $L_p$--space in $\rn$ 
quasi--normed by
\begin{\eq}   \label{2.90}
\| f \, | L_p (\rn, w) \| = \| wf \, | L_p (\rn) \|.
\end{\eq}
Let $0<p,q \le \infty$ ($p<\infty$ for $F$--spaces), $s\in \real$ and let $\vp = \{ \vp_j \}^\infty_{j=0}$ be the usual dyadic resolution of unity in $\rn$ as introduced in\eqref{1.4}--\eqref{1.6}. We replace $L_p (\rn)$ in the parts (i) and (ii) of Definition 
\ref{D1.1} by $L_p (\rn,w)$. Then $\Bs (\rn,w)$ is the collection of all $f\in S'(\rn)$ such that
\begin{\eq}   \label{2.91}
\| f \, | \Bs (\rn,w) \|_{\vp} = \Big( \sum^\infty_{j=0} 2^{jsq} \| (\vp_j \wh{f} )^\vee | L_p (\rn, w) \|^q \Big)^{1/q}
\end{\eq}
is finite and $\Fs (\rn, w)$ is the collection of all $f\in S'(\rn)$ such that
\begin{\eq}  \label{2.92}
\| f \, | \Fs (\rn, w) \|_{\vp} = \Big\| \Big( \sum^\infty_{j=0} 2^{jsq} \big| \big( \vp_j \wh{f} \big)^\vee (\cdot) \big|^q \Big)^{1/q} | L_p (\rn, w) \Big\|
\end{\eq}
is finite (with the usual modifications if $q=\infty$).
It comes out that the spaces $\As (\rn,w)$, $A\in \{B,F \}$, are independent of $\vp = \{ \vp_j \}^\infty_{j=0}$ (equivalent quasi--norms).
Furthermore, $f \mapsto wf$ is an isomorphic map from $\Bs (\rn,w)$ onto $\Bs (\rn)$ and from $\Fs (\rn,w)$ onto $\Fs (\rn)$,
\begin{\eq}   \label{2.93}
\| wf \, | \As (\rn) \| \sim \| f\, | \As (\rn, w) \|.
\end{\eq}
Of special interest are the weights
\begin{\eq}    \label{2.94}
w_\delta (x) = (1+ |x|^2)^{\delta/2}, \qquad \delta \in \real, \quad x\in \rn.
\end{\eq}
In particular, for fixed $A \in \{B,F \}$ and $0<p,q \le \infty$ ($p<\infty$ for $F$--spaces) the corresponding spaces $\As (\rn, 
w_\delta)$ are monotone with respect to $s\in \real$ and $\delta \in \real$. Details, explanations and references to the substantial
history of these spaces may be found in \cite[Chapter 6]{T06}. We ask for weighted counterparts of the Theorems \ref{T2.4} and 
\ref{T2.10}. Let $f \in
\As (\rn,w)$ with $p,q,s$ as in Theorem \ref{T2.4} and $w(x) \ge c >0$ (this applies in particular to $w_\delta$ with $\delta \ge 0
$ which is the  case of interest for us later on).  Then $f$
can be represented as in Theorem \ref{T2.4}. This applies also to $wf \in \As (\rn)$ with the optimal representation
\begin{\eq}   \label{2.95}
wf = \sum_{\substack{\beta \in \nat^n_0, j\in \no, \\ m \in \zn}} \lambda^\beta_{j,m}(f,w) \, k^\beta_{j,m}
\end{\eq}
where
\begin{\eq}   \label{2.96}
\lambda^\beta_{j,m} (f,w) = 2^{jn} \big(f, w \Phi^\beta_{j,m} \big), \qquad f \in S'(\rn),
\end{\eq}
and
\begin{\eq}   \label{2.97}
\| f \, | \As (\rn, w) \| \sim \| \lambda (f,w) \, | a^s_{p,q} (\rn)^{\vk} \|.
\end{\eq}
We used representations  of this type occasionally, for example in \cite[Chapters 6--8]{T01} in connection with function spaces on domains
and manifolds, but also for some representations on \rn. But the outcome is not really satisfactory for representations in \rn. One
would like to stick at the same representations \eqref{2.28}, \eqref{2.30}
as in the unweighted case and to shift the weight to the related sequence spaces as it
has already been done in connection with wavelet decompositions for the spaces $\As (\rn,w)$ in \cite[Section 6.2]{T06}. At least
some (but not all) of the arguments used there can be transferred to the above quarkonial expansions. For this purpose we first 
introduce the weighted counterpart of Definition \ref{D2.2} in the same way as the weighted modification according to
\cite[Definition 6.11, p.\,269]{T06} of Definition \ref{D1.10} in connection with wavelet expansions of the spaces $\As (\rn,w)$.

\begin{definition}    \label{D2.16}
Let $w$ be the above weight function. Let $s\in \real$ and $0< p,q \le \infty$ $(p<\infty$ for $f$--spaces$)$. Let $\vk \in \real$.
Then $b^s_{p,q} (\rn, w)^{\vk}$ is the collection of all sequences
\begin{\eq}   \label{2.98}
\lambda = \{ \lambda^\beta: \ \beta \in \nat^n_0 \}, \quad \lambda^\beta = \{ \lambda^\beta_{j,m} \in \comp: \ j\in \no, \ m \in \zn \}
\end{\eq}
such that
\begin{\eq}   \label{2.99}
\| \lambda \, | b^s_{p,q} (\rn,w)^{\vk} \| = \sup_{\beta \in \nat^n_0} 2^{\vk |\beta|}
\bigg( \sum^\infty_{j=0} 2^{j(s- \frac{n}{p})q} \Big( \sum_{m\in \zn} w(2^{-j} m)^p |\lambda^\beta_{j,m} |^p \Big)^{q/p} \bigg)^{1/q}
\end{\eq}
is finite and $f^s_{p,q} (\rn, w)^{\vk}$ is the collection of all sequences according to \eqref{2.98} such that 
\begin{\eq}   \label{2.100}
\begin{aligned}
&\| \lambda \, | f^s_{p,q} (\rn, w)^{\vk} \| \\
& \quad = \sup_{\beta \in \nat^n_0} 2^{\vk |\beta|} \bigg\| \Big( \sum^\infty_{j=0} \sum_{m \in \zn} 2^{jsq} w(2^{-j}m
)^q \big|  \lambda^\beta_{j,m} \, \chi_{j,m} (\cdot) \big|^q \Big)^{1/q} \, | L_p (\rn) \bigg\|
\end{aligned}
\end{\eq}
is finite $($with the usual modifications if $\max(p,q) =\infty.)$
\end{definition}

\begin{remark}   \label{R2.17}
This is the weighted generalization of Definition \ref{D2.2}. Quite obviously, $a^s_{p,q} (\rn,w)^{\vk}$ with $a \in \{b,f \}$ are
quasi--Banach spaces (Banach spaces if $p \ge 1$, $q \ge 1$).
\end{remark}

Now one can 
extend Theorem \ref{T2.4} to its weighted generalization for the same parameters as there. As mentioned above the representability 
\eqref{2.32} is ensured so far if, in addition, $w(x) \ge c >0$. But in the course of the arguments below one obtains in Proposition
\ref{P2.19} by duality an extension of Proposition \ref{P2.9} to $\Cc^s (\rn,w) = B^s_{\infty,\infty} (\rn,w)$ for $s<0$ and all
admissible weights $w$. Then \eqref{2.32} for $f\in \As (\rn, w)$, where $w$ is an arbitrary admitted weight, and with $s$ as in
\eqref{2.27} can be justified by a second duality argument. We return to this point at the end of this section
in Remark \ref{R2.24} below and take temporarily
the representability \eqref{2.32} for granted for all respective spaces. Let $\sigma^{n}_p$ and $\sigma^{n}_{p,q}$ be as in 
\eqref{2.26} and let $k^\beta_{j,m}$ and $\Phi^\beta_{j,m}$ be the quarkonial building blocks as introduced in \eqref{2.4} and \eqref{2.11}. Let $\ve >0$ be as in \eqref{2.15} and Proposition \ref{P2.1}.

\begin{theorem}   \label{T2.18}
Let $0<w(x) \in C^\infty (\rn)$ be a weight function satisfying \eqref{2.88}, \eqref{2.89}. Let $0<p,q \le \infty$ $(p<\infty$ for $F$--spaces$)$,
\begin{\eq}   \label{2.101}
s > \sigma^{n}_p \ \text{for $B$--spaces}, \qquad s> \sigma^{n}_{p,q} \ \text{for $F$--spaces},
\end{\eq}
and $\vk > -\ve$.
Then $f\in S'(\rn)$ belongs to $\As (\rn,w)$ if, and only if, it can be represented as
\begin{\eq}   \label{2.102}
f = \sum_{\substack{\beta \in \nat^n_0, j\in \no, \\ m \in \zn}} \lambda^\beta_{j,m} \, k^\beta_{j,m}, \qquad \lambda \in a^s_{p,q}
(\rn,w)^{\vk},
\end{\eq}
unconditional convergence being in $S'(\rn)$. Furthermore,
\begin{\eq}   \label{2.103}
\| f \, | \As (\rn,w) \| \sim \inf \| \lambda \, | a^s_{p,q} (\rn, w)^{\vk} \|
\end{\eq}
where the infimum is taken over all admissible representations \eqref{2.102}. Let
\begin{\eq}   \label{2.104}
\lambda^\beta_{j,m} (f) = 2^{jn} \, \big(f, \Phi^\beta_{j,m} \big), \qquad f \in S'(\rn),
\end{\eq}
and
\begin{\eq}   \label{2.105}
\lambda (f) = \{ \lambda^\beta (f): \ \beta \in \nat^n_0 \}, \qquad 
\lambda^\beta (f) = \big\{ \lambda^\beta_{j,m} (f): \ j\in \no, \ m\in \zn \big\}.
\end{\eq}
Then $f \in \As (\rn,w)$ can be represented as
\begin{\eq}   \label{2.106}
f = \sum_{\substack{\beta \in \nat^n_0, j\in \no, \\ m \in \zn}} \lambda^\beta_{j,m} (f)\, k^\beta_{j,m}, 
\end{\eq}
unconditional convergence being in $S'(\rn)$ and
\begin{\eq}   \label{2.107}
\| f \, | \As (\rn,w) \| \sim \|\lambda(f) \, | a^s_{p,q} (\rn, w)^{\vk} \|
\end{\eq}
$($equivalent quasi--norms$)$.
\end{theorem}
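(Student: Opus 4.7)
The plan is to mimic the two-step proof of Theorem \ref{T2.4} in the weighted setting, replacing the unweighted atomic representation theorem by its weighted counterpart (the analogue of Theorem \ref{T1.7} with the sequence spaces $a^s_{p,q}(\rn)$ replaced by their weighted versions in which each coefficient $\lambda_{j,m}$ is multiplied by $w(2^{-j}m)$). This weighted atomic characterization is well-known for admissible weights satisfying \eqref{2.88}, \eqref{2.89}; it is established e.g.\ in \cite[Chapter 6]{T06} by combining the unweighted atomic theorem with the isomorphism $f\mapsto wf$ between $\As(\rn,w)$ and $\As(\rn)$, together with the localization property that $w$ is essentially constant on each dyadic cube $Q_{j,m}$. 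I take this as an available ingredient, together with the representability of $f\in \As(\rn,w)$ in the form \eqref{2.106} (this last point is the subject of the upcoming Remark \ref{R2.24}, to be justified by duality; I assume it here as permitted in the theorem's setup).

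For the converse direction, suppose $\lambda\in a^s_{p,q}(\rn,w)^{\vk}$ with $\vk>-\ve$. By Proposition \ref{P2.1}(i), for any $\vr$ with $-\ve<\vr<\vk$ the rescaled family $\{2^{-\vr|\beta|} k^\beta_{j,m}\}_{j,m}$ is, uniformly in $\beta\in\nat^n_0$, a $(K,0,C,d)$-atomic system in the sense of Definition \ref{D1.3}; no moment conditions are needed since \eqref{2.101} allows $L=0$. Applying the weighted atomic representation theorem separately for each $\beta$ to $f^\beta=\sum_{j,m}\lambda^\beta_{j,m}k^\beta_{j,m}$, and summing via the $u$-triangle inequality with $u=\min(1,p,q)$ exactly as in \eqref{2.35}, one obtains
\begin{equation*}
\| f \, | \As(\rn,w)\|^u \le c\sum_{\beta\in\nat^n_0} 2^{\vr|\beta|u}\,\|\lambda^\beta \,|\, a^s_{p,q}(\rn,w)\|^u \le c'\,\|\lambda\,|\,a^s_{p,q}(\rn,w)^{\vk}\|^u,
\end{equation*}
where the last step uses $\vk>\vr$ to geometrically sum the $\beta$-series.

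For the direct estimate, fix $f\in\As(\rn,w)$ and consider $\lambda^\beta_{j,m}(f)=2^{jn}(f,\Phi^\beta_{j,m})$. Following Step 3 of the proof of Theorem \ref{T2.4}, I decompose $\Phi^\beta_{j,m}$ via the partition \eqref{2.36} and the identity \eqref{2.37} into atomic pieces $a^{L,\beta,k}_{j,m}$ supported near $2^{-j}(m+k)$ and carrying moment conditions up to order $2L-1$. The decay estimate underlying \eqref{2.42} shows that $\langle k\rangle^{D}2^{\vk|\beta|} a^{L,\beta,k}_{j,m}$ is $L_\infty$-normalized with arbitrarily large $D$ and arbitrary $\vk>0$. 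Multiplied by $2^{jn}$ these are kernels of local means, and the weighted version of \cite[Theorem 1.15]{T08} (again available for admissible $w$) gives, for each $\beta$,
\begin{equation*}
\|\lambda^\beta(f)\,|\,a^s_{p,q}(\rn,w)\| \le c\,2^{-\vk|\beta|}\,\|f\,|\,\As(\rn,w)\|,
\end{equation*}
with the $k$-summation over translations absorbed by the polynomial growth \eqref{2.89} of $w$ against $\langle k\rangle^{-D}$ for $D$ chosen larger than the parameter $\alpha$ in \eqref{2.89}. Taking the supremum over $\beta$ yields $\|\lambda(f)\,|\,a^s_{p,q}(\rn,w)^{\vk}\|\lesssim \|f\,|\,\As(\rn,w)\|$. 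Combining the two directions gives both \eqref{2.103} and \eqref{2.107}.

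The main obstacle is the appearance of the weight $w(2^{-j}m)$ at the \emph{shifted} location $2^{-j}(m+k)$ inside the local means estimate: one must show that the cost of moving from $w(2^{-j}m)$ to $w(2^{-j}(m+k))$ is summably controlled. This is exactly where the admissibility condition \eqref{2.89} is decisive, since it bounds the ratio by $(1+|k|)^\alpha$, which is then defeated by the $\langle k\rangle^{-D}$ decay for $D>\alpha+n$. A secondary delicate point is the unconditional convergence in $S'(\rn)$ and the validity of the representability \eqref{2.106} for weights $w$ that are not bounded below; here one relies on the duality argument announced in Remark \ref{R2.24}, which in turn is built on the quarkonial counterpart of Proposition \ref{P2.9} transferred to the weighted Hölder–Zygmund space $\Cc^s(\rn,w)$.
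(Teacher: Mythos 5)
Your converse direction (representation $\Rightarrow$ $f\in\As(\rn,w)$) is essentially the paper's Step~1: the paper does not invoke a pre-packaged weighted atomic theorem but instead multiplies the quarks by $w(x)/w(2^{-j}m)$ to obtain admitted atoms for the \emph{unweighted} space $\As(\rn)$ and then applies the isomorphism \eqref{2.93}; your phrasing via a ``weighted Theorem~\ref{T1.7}'' amounts to the same thing and is unobjectionable.

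The direct estimate, however, contains a genuine gap. You assert that a ``weighted version of \cite[Theorem 1.15]{T08}'' (local means characterizing $\As(\rn,w)$ with admissible $w$) is available and apply it to each $\beta$ to obtain $\|\lambda^\beta(f)\,|\,a^s_{p,q}(\rn,w)\|\lesssim 2^{-\vk|\beta|}\|f\,|\,\As(\rn,w)\|$. No such theorem is cited in the paper, and in fact the structure of the paper's proof is strong evidence that it is \emph{not} off the shelf: the paper goes to considerable trouble in Steps~2 and~3 precisely to avoid needing it. For the $F$-spaces the paper localizes $f$ in physical space by \eqref{2.111}, uses the localization property \eqref{2.112} to reduce to the unweighted local means theorem on each $\psi_l f$, and then clips the pieces together. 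For the $B$-spaces this route is blocked: by \cite[Theorem~2.4.7]{T92} the localization property \eqref{2.112} fails for $\Bs(\rn,w)$ when $p\ne q$, so the paper has to resort to real interpolation \eqref{2.117}--\eqref{2.118} between the $B^{s_i}_{p,p}$ spaces (which coincide with $F^{s_i}_{p,p}$). Your argument makes no such distinction between $B$- and $F$-spaces and hence implicitly presupposes a uniform weighted local means theorem. If you want to salvage the plan, you would need to either prove that weighted local means theorem (including the translation-shift bookkeeping you flag), or replace the last step by the paper's localize-then-interpolate scheme.

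A secondary remark: even for the $F$-spaces, moving the weight from $w(2^{-j}m)$ to $w(2^{-j}(m+k))$ and summing over $k$ is not purely a matter of quoting \eqref{2.89}; one has to carry this through the $L_p(\ell_q)$-quasinorm in \eqref{2.100}, which is what the decomposition of the sequence space $f^s_{p,q}(\rn)$ into an $\ell_p$-sum over unit cubes in the paper's Step~2 accomplishes. Your proposal mentions the shift but does not explain how the $k$-summation is fed through the vector-valued sequence quasi-norm.
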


\begin{proof}
{\em Step 1.} Let $f \in S'(\rn)$ be represented according to \eqref{2.102}. Similarly as in \eqref{2.34},
\begin{\eq}   \label{2.108}
\Big\{ 2^{-\vr |\beta|} \, k^\beta_{j,m} (x) \, \frac{w(x)}{w(2^{-j} m)}: \ j\in \no, \ m\in \zn \Big\}, \qquad \beta \in \nat^n_0,
\end{\eq}
with $-\vr <\ve$ are admitted atoms, uniformly in $\beta \in \nat^n_0$. Then
\begin{\eq}   \label{2.109}
wf = \sum_{\substack{\beta \in \nat^n_0, j\in \no, \\ m \in \zn}} w(2^{-j} m) \,
\lambda^\beta_{j,m} \cdot k^\beta_{j,m} (x) \frac{w(x)}{w(2^{-j} m)}
\end{\eq}
is an atomic decomposition of $wf \in S'(\rn)$. Now it follows from \eqref{2.93} and the same arguments as in Step 2 of the proof of
Theorem \ref{T2.4} that $f \in \As (\rn, w)$ and
\begin{\eq}   \label{2.110}
\| f \, | \As (\rn, w) \| \le c \, \|\lambda \, | a^s_{p,q} (\rn, w)^{\vk} \|.
\end{\eq}
{\em Step 2.} We prove \eqref{2.107} for the spaces $\Fs (\rn,w)$. This justifies also \eqref{2.103} for these spaces.
Let $\psi$ be a non--negative 
compactly supported $C^\infty$ function in $\rn$ such that
\begin{\eq}   \label{2.111}
\sum_{l \in \zn} \psi_l (x) =1, \quad x\in \rn, \qquad \text{where} \quad \psi_l (x) = \psi (x-l).
\end{\eq}
Then $f\in \Fs (\rn, w)$ if, and only if, $\psi_l f \in \Fs (\rn)$ and
\begin{\eq}   \label{2.112}
\| f \, | \Fs (\rn,w) \|^p \sim \sum_{l\in \zn} w^p (l) \, \| \psi_l f \, | \Fs (\rn) \|^p.
\end{\eq}
This is covered by \cite[Proposition 6.13, pp.\,269--270]{T06}. It is an easy extension of a corresponding assertion for the unweighted
spaces $\Fs (\rn)$ as it may be found in \cite[Theorem 2.35, Remark 2.36, pp.\,43--44]{T20} with a reference to \cite[Theorem 2.4.7, 
pp.\,124--125]{T92}. Let $f\in \Fs (\rn,w)$. Then
\begin{\eq}   \label{2.113}
\lambda^\beta_{j,m} (f) = \sum_{l \in \zn} 2^{jn} \, \big(\psi_l f, \Phi^\beta_{j,m} \big)
\end{\eq}
decomposes \eqref{2.104} into  local means for $\psi_l f \in \Fs (\rn)$ with the molecular kernels $\Phi^\beta_{j,m}$
which  can be broken into the atomic kernels in \eqref{2.40} and \eqref{2.42} with the counterpart
\begin{\eq}    \label{2.114}
\| \lambda^\beta (\psi_l f) \, | f^s_{p,q} (\rn) \| \le c \, 2^{-\vk |\beta|} \| \psi_l f \, | \Fs (\rn) \|, \qquad  l \in \zn,
\end{\eq}
of \eqref{2.43}.  The sequence spaces
$f^s_{p,q} (\rn)$ according to \eqref{1.29} can be decomposed similarly as \eqref{2.112} into an $\ell_p$--sum with respect to the
cubes $k + C (0,1)^n$, $k \in \zn$, $C>1$, related to the supports of $\psi_k$. Clipping together these ingredients first on the atomic
level, extended afterwards via \eqref{2.40}--\eqref{2.42} to the molecular kernels $\Phi^\beta_{j,m}$, where $w(k) \le c\,w(l) \langle
k-l \rangle^{\alpha}$ according to \eqref{2.89} can be incorporated by a suitable chosen $D$ in \eqref{2.42}, one obtains
\begin{\eq}   \label{2.115}
\begin{aligned}
\| \lambda (f) \, | f^s_{p,q} (\rn, w)^{\vk}\|^p & \le c \, \sum_{l\in \zn} w^p (l) \, \| \psi_l f \, |\Fs (\rn) \|^p \\
&\le c' \| wf \, | \Fs (\rn) \|^p \\
&\le c'' \, \| f \, | \Fs (\rn, w ) \|^p.
\end{aligned}
\end{\eq}
This estimate and \eqref{2.110} justify \eqref{2.103} and \eqref{2.107} for the spaces $\Fs (\rn, w)$. These arguments apply also to the spaces 
\begin{\eq}   \label{2.116}
F^s_{\infty, \infty} (\rn, w) = B^s_{\infty, \infty} (\rn, w), \qquad s>0.
\end{\eq}
{\em Step 3.} According to \cite[Theorem 2.4.7, pp.\,124--125]{T92}
there is no counterpart of \eqref{2.112} for the spaces $\Bs (\rn,w)$ with $p\not=q$. We rely on the real interpolation
\begin{\eq}   \label{2.117}
\Bs (\rn,w) = \big( B^{s_0}_{p,p} (\rn,w), B^{s_1}_{p,p} (\rn, w) \big)_{\theta,q}
\end{\eq}
where $\sigma^{n}_p <s_0 <s< s_1 <\infty$, $s= (1-\theta)s_0 + \theta s_1$ and $0<p,q \le \infty$, what follows from a
related assertion for the unweighted spaces $\Bs (\rn)$ and the isomorphic map $f \to wf$ from the weighted spaces onto the related
unweighted spaces. We apply \eqref{2.115} to the spaces $B^s_{p,p} (\rn,w) = F^s_{p,p} (\rn,w)$ with
$0<p\le \infty$, $s> \sigma^{n}_p$. As mentioned in Remark \ref{R2.5} one can replace $\ell_\infty$ with respect to $\beta \in 
\nat^n_0$ in \eqref{2.24}, \eqref{2.25} by $\ell_r$ with $r>0$. This applies also to their weighted counterparts \eqref{2.99}, 
\eqref{2.100}. In particular let
$b^s_p (\rn,w)^{\vk}$ be the modification of $b^s_{p,p} (\rn,w)^{\vk} = f^s_{p,p} (\rn, w)^{\vk}$ in \eqref{2.99}, \eqref{2.100} with an $\ell_p$--quasi--norm in place of $\sup_{\beta \in \nat^n_0}$. Then
\begin{\eq}   \label{2.118}
b^s_p (\rn,w)^{\vk}_q = \big( b^{s_0}_p (\rn,w)^{\vk}, b^{s_1}_p (\rn, w)^{\vk} \big)_{\theta,q}
\end{\eq}
is the related substitute of $b^s_{p,q} (\rn,w)^{\vk}$ in \eqref{2.99} with $\sum_{m\in \zn, \beta \in \nat^n_0}$ in place of $\sum_{m\in\zn}$ and without $\sup_{\beta \in \nat^n_0}$. As far as the interpolation \eqref{2.118} is concerned one may consult 
\cite[p.\,272]{T06} where we reduced related assertions via wavelet isomorphisms  to \eqref{2.117}. 
By Step 2 one has the linear bounded map
\begin{\eq}   \label{2.119}
f \mapsto \big\{ \lambda^\beta_{j,m} (f): \ \beta \in \nat^n_0, \ j \in \no, \ m\in \zn \big\}
\end{\eq}
from $B^{s_l}_{p,p} (\rn, w)$ into $b^{s_l}_p (\rn, w)^{\vk}$ where $l = 0,1$. Then it follows from \eqref{2.117}, \eqref{2.118} and the indicated modification that
\begin{\eq}   \label{2.120}
\| \lambda (f) \, | b^s_{p,q} (\rn, w)^{\vk} \| \le c \, \|f \, | \Bs (\rn, w) \|.
\end{\eq}
Now \eqref{2.110} on the one hand and \eqref{2.115}, \eqref{2.120} on the other hand prove the theorem.
\end{proof}

Next we ask for the weighted counterpart of Theorem \ref{T2.10}. We need two preparations. Let $w(x)$ be an admissible weight 
function satisfying
\eqref{2.88}, \eqref{2.89}. Then $w^{-1} (x)$ is also an admissible weight function and one can define the related spaces $\As (\rn,w)$ and
$\As (\rn, w^{-1})$ quasi--normed according to \eqref{2.91}, \eqref{2.92} (and its counterpart with $w^{-1}$ in place of $w$). It
follows from the isomorphic map $f \mapsto wf$ according to \eqref{2.93} that the duality \eqref{2.53} can be extended in the 
framework of the dual pairing $\big( S(\rn), S'(\rn) \big)$ to
\begin{\eq}  \label{2.121}
\Cc^s (\rn, w) = B^s_{\infty, \infty} (\rn, w) = B^{-s}_{1,1} (\rn, w^{-1})'.
\end{\eq}
Let again $k^\beta_{j,m}$ and $\Phi^\beta_{j,m}$ be the quarkonial building blocks as introduced in \eqref{2.4} and \eqref{2.11}. Then
one can extend Proposition \ref{P2.9} as follows.

\begin{proposition}   \label{P2.19}
Let $w$ be an admissible weight function satisfying \eqref{2.88}, \eqref{2.89} and let $s\in \real$.
Then any $f \in \Cc^s (\rn, w)$ can be represented as
\begin{\eq}   \label{2.122}
f = \sum_{\substack{\beta \in \nat^n_0, j\in \no, \\ m \in \zn}} 2^{jn}\,\big( f, k^\beta_{j,m} \big) \Phi^\beta_{j,m},
\end{\eq}
unconditional convergence being in $S'(\rn)$.
\end{proposition}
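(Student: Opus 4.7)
The plan is to extend the duality argument underlying the proof of Proposition \ref{P2.9} to the weighted setting. The crucial ingredients now in place are the weighted duality \eqref{2.121} and the weighted quarkonial representation Theorem \ref{T2.18}. As in \eqref{2.58}, the idea is to expand any test element in the predual $B^{-s}_{1,1}(\rn, w^{-1})$ of $\Cc^s (\rn, w)$ using the quarkonial pair $(k^\beta_{j,m}, \Phi^\beta_{j,m})$ and then transpose the expansion to $f$.

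In detail, fix $\vp \in S(\rn)$ and choose a real $t > \max(0, -s)$. Since $w^{-1}$ is an admissible weight of at most polynomial growth by \eqref{2.88}, \eqref{2.89}, and $\vp$ decays rapidly together with all its derivatives, one checks via the isomorphism $g \mapsto w^{-1} g$ from \eqref{2.93} that $\vp \in B^t_{1,1}(\rn, w^{-1})$; in fact $\vp$ lies in every weighted Besov space. Because $t > \sigma^{n}_1 = 0$, Theorem \ref{T2.18}, combined with the weighted analog of Corollary \ref{C2.7} (which applies since $p=q=1 <\infty$), yields the representation
\begin{equation*}
\vp = \sum_{\beta, j, m} 2^{jn} \big( \vp, \Phi^\beta_{j,m} \big)\, k^\beta_{j,m}
\end{equation*}
with unconditional convergence in $B^t_{1,1}(\rn, w^{-1})$. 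The continuous embedding $B^t_{1,1}(\rn, w^{-1}) \hra B^{-s}_{1,1}(\rn, w^{-1})$, valid because $t > -s$, then transports the unconditional convergence into the predual of $\Cc^s (\rn, w)$. Pairing with $f \in \Cc^s (\rn, w)$ via \eqref{2.121} and interchanging sum and pairing exactly as in \eqref{2.58} gives
\begin{equation*}
(f, \vp) = \sum_{\beta, j, m} 2^{jn} \big( f, k^\beta_{j,m} \big)\, \big( \Phi^\beta_{j,m}, \vp \big),
\end{equation*}
and since this holds for every $\vp \in S(\rn)$ with an unconditionally convergent scalar series on the right, \eqref{2.122} holds unconditionally in $S'(\rn)$.

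The main obstacle is verifying the weighted version of Corollary \ref{C2.7}, i.e.\ upgrading the $S'(\rn)$--convergence furnished by Theorem \ref{T2.18} to unconditional convergence in $B^t_{1,1}(\rn, w^{-1})$. This is however a routine extension of the unweighted case: with $p=q=1 < \infty$, the quasi--norm \eqref{2.99} reduces to an ordinary weighted $\ell_1$--sum, and Lebesgue's dominated convergence theorem applied in $\ell_1$ delivers the upgrade in the same way as in the proof of Corollary \ref{C2.7}. A secondary point --- automatic via the isomorphism $g \mapsto w^{-1} g$ of \eqref{2.93} --- is that the embedding $B^t_{1,1}(\rn, w^{-1}) \hra B^{-s}_{1,1}(\rn, w^{-1})$ preserves unconditional convergence, reducing to the standard unweighted embedding. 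Once these two technical points are settled the proof is essentially the algebraic manipulation displayed above.
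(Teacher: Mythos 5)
Your proposal is sound and follows the same underlying duality idea as the paper, but runs it in a different order. You test directly against $\vp \in S(\rn)$, expand $\vp$ with the canonical coefficients in a space $B^t_{1,1}(\rn,w^{-1})$ of high smoothness $t>\max(0,-s)$, and thereby treat all $s\in\real$ and all admissible $w$ in one stroke. The paper instead expands a general element $g$ of the predual $B^{-s}_{1,1}(\rn,w^{-1})$, but only for $s<0$ and weights with $w^{-1}(x)\ge c>0$, and then reaches arbitrary admissible $w$ via \eqref{2.124} and monotonicity in the weight, and finally all $s\in\real$ by monotonicity in $s$. Your route is more direct and dispenses with the two embedding steps; the paper's route has the small extra payoff of weak*-convergence in $\Cc^s(\rn,w)$ in the restricted case, which it records.

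The one point you must treat more carefully is the appeal to Theorem \ref{T2.18} (together with a ``weighted Corollary \ref{C2.7}'') to expand $\vp$ in $B^t_{1,1}(\rn,w^{-1})$ for an \emph{arbitrary} admissible weight $w^{-1}$. Within the paper's logical order, the representability part \eqref{2.106} of Theorem \ref{T2.18} is at this point only secured when the weight is bounded away from zero; for general weights it is explicitly ``taken temporarily for granted'' and is justified only in Remark \ref{R2.24}, whose argument uses Proposition \ref{P2.19} itself. A blanket citation of Theorem \ref{T2.18} here is therefore circular, and this is precisely why the paper restricts its duality step to $w^{-1}\ge c>0$. Your argument is repairable without any new idea, because the only element you expand is a Schwartz function: $\vp$ belongs to the unweighted $B^t_{1,1}(\rn)$, so Corollary \ref{C2.7} already gives the canonical expansion converging to $\vp$ unconditionally in $B^t_{1,1}(\rn)$, hence in $S'(\rn)$; the coefficient estimate \eqref{2.115}, \eqref{2.120} (the direction of Theorem \ref{T2.18} proved in Steps 2--3, valid for every admissible weight and independent of the representability issue) shows that $\lambda(\vp)$ lies in $b^t_{1,1}(\rn,w^{-1})^{\vk}$; and the synthesis estimate \eqref{2.110}, combined with your dominated-convergence tail argument (using the exponential gain in $\beta$ as in Remark \ref{R2.5}), upgrades the convergence to the norm of $B^t_{1,1}(\rn,w^{-1})$, the limit being $\vp$ by consistency in $S'(\rn)$. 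With this emendation the interchange of the sum with the pairing \eqref{2.121} is legitimate, and your conclusion that \eqref{2.122} converges unconditionally in $S'(\rn)$ follows as you state.
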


\begin{proof} According to Theorem \ref{T2.18} and the preceding comments one can expand $g\in B^{-s}_{1,1} (\rn, w^{-1})$ with $-s >0$
and $w^{-1} (x) \ge c >0$ by the unconditionally converging series
\begin{\eq}   \label{2.123}
g = \sum_{\substack{\beta \in \nat^n_0, j\in \no, \\ m \in \zn}} 2^{jn}\,\big( g, \Phi^\beta_{j,m} \big) k^\beta_{j,m}.
\end{\eq}
Then \eqref{2.122} follows from \eqref{2.58} and \eqref{2.121} for $f\in \Cc^s (\rn,w)$ with $s<0$ and $w(x) \le c$ in the same way as
in the proof of Proposition \ref{P2.9}. In particular, \eqref{2.122} converges in the weak*--topology of $\Cc^s (\rn,w)$ and 
unconditionally in $S'(\rn)$. This applies in particular to $w_\delta (x)$ according to \eqref{2.94} with $\delta \le 0$ and by
monotonicity to all $w_\delta (x)$ with $\delta \in \real$. If $w$ is an arbitrary admissible weight function
then one has by \eqref{2.89} that
\begin{\eq}    \label{2.124} 
w(x) \le c \, w_\alpha (x) \qquad \text{and} \qquad w_{- \alpha} (x) \le c \, w^{-1} (x).
\end{\eq}
This shows, again by monotonicity, that \eqref{2.122} can be extended to any $f\in \Cc^s (\rn, w)$ with $s<0$, converging 
unconditionally at least in $S'(\rn)$. The extension to $s\in \real$ is again a matter of monotonicity with respect to $s$ (and fixed
$w$).
\end{proof}

\begin{remark}   \label{R2.20}
The series \eqref{2.123} converges unconditionally in $B^{-s}_{1,1} (\rn, w^{-1})$
for $-s >0$, and any $w^{-1}$. This follows from
Remark \ref{R2.24} below and the respective assertions in Theorem \ref{T2.18} based on the preceding comments. Then \eqref{2.121}
shows that \eqref{2.122} converges  in the weak*--topology if $s<0$ complementing Proposition \ref{P2.9}. But this cannot be extended
to $s>0$. We dealt with related problems in Corollary \ref{C2.13} and Remark \ref{R2.14}.
\end{remark}

We need a second preparation. The atomic representation Theorem \ref{T1.7} remains valid if one replaces the moment conditions \eqref{1.23} by
\begin{\eq}   \label{2.125}
\Big| \int_{\rn} \Psi (x) \, a_{j,m} (x) \, \di x \Big| \le c\,2^{-j(L+n)} \| \Psi \, | C^L (\rn) \|
\end{\eq}
for some $c>0$ and all functions $\Psi \in C^L (\rn)$, where $C^L (\rn)$ is normed by
\begin{\eq}   \label{2.126}
\| \Psi \, | C^L (\rn) \| = \sup_{x \in \rn, |\alpha| \le L} |D^\alpha \Psi (x) |
\end{\eq}
with the same $L\in \nat$ as in \eqref{1.31} for the $B$--spaces and as in \eqref{1.34} for the $F$--spaces.
This modification has been observed in \cite{Skr98} dealing with spaces of type $\As$ on smooth manifolds. A related
formulation may also be found in \cite[p.\,134]{T08}. This modification is sufficient for our purpose in this Section \ref{S2.4}. 
But later on when dealing is Section \ref{S3.1} with pointwise multipliers for the spaces
\begin{\eq}   \label{2.127}
\As (\rn) \qquad \text{with $A\in \{B,F \}$, \ $s<0$ and $0<p,q \le \infty$},
\end{\eq}
($p<\infty$ for $F$--spaces) we rely on the refinement of \eqref{2.125}
\begin{\eq}   \label{2.128}
\Big| \int_{\rn} \Psi (x) \, a_{j,m} (x) \, \di x \Big| \le C\, 2^{-j(\vr+n)} \| \Psi \, | \Cc^{\vr} (\rn) \|, \quad j\in \no, \quad m\in \zn,
\end{\eq}
for all $\Psi \in \Cc^{\vr} (\rn) = B^{\vr}_{\infty, \infty} (\rn)$ replacing $L$ in \eqref{1.31} and \eqref{1.34}
by \vr. This observation goes back
to \cite{Scha13}. We fix the outcome restricting us to the spaces $\As (\rn)$ in \eqref{2.127} (in particular $s<0$). Let $\vr >0$,
$d>1$ and $C>0$. Then
\begin{\eq}   \label{2.129}
a_{j,m} \in L_\infty (\rn) \quad \text{with} \quad \supp a_{j,m} \subset d \, Q_{j,m}, \quad j\in \no, \quad m\in \zn,
\end{\eq}
as in \eqref{1.22} and \eqref{2.128} for all $\Psi \in \Cc^{\vr} (\rn)$ is called an $\vr$--atom (more precisely an 
$(\vr,C,d)$--atom). Let $a^s_{p,q} (\rn)$ be as in Definition \ref{D1.5} and Remark \ref{R1.6}. Let $\sigma^n_p$ and $\sigma^n_{p,q}$
be as in \eqref{1.30}.

\begin{proposition}   \label{P2.21}
Let $\As (\rn)$ be as in \eqref{2.127} with $p<\infty$ for the $F$--spaces. Let
\begin{\eq}   \label{2.130}
\vr >
\begin{cases}
\sigma^{n}_p -s &\text{if $A=B$}, \\
\sigma^{n}_{p,q} -s &\text{if $A=F$}.
\end{cases}
\end{\eq}
Let $f\in S'(\rn)$. Then $f\in \As (\rn)$ if, and only if, it can be represented as
\begin{\eq}   \label{2.131}
f = \sum^\infty_{j=0} \sum_{m \in \zn} \mu_{j,m} \, a_{j,m}, \qquad \mu = \{ \mu_{j,m} \} \in a^s_{p,q} (\rn)
\end{\eq}
where $a_{j,m}$ are $\vr$--atoms $($more precisely $(\vr, C,d)$--atoms for a fixed $d>1$ and $C>0)$,
unconditional convergence being in $S'(\rn)$. Furthermore,
\begin{\eq}  \label{2.132}
\| f \, | \As (\rn) \| \sim \inf \| \mu \, | a^s_{p,q} (\rn) \|
\end{\eq}
where the infimum is taken over all admissible representations \eqref{2.131} $($equivalent quasi--norms$)$.
\end{proposition}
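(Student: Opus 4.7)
The proposition is a refinement of the classical atomic representation in Theorem~\ref{T1.7} in which the polynomial moment condition \eqref{1.23} is replaced by the H\"older--Zygmund testing condition \eqref{2.128}. Following \cite{Scha13}, the plan is to treat the two implications separately, reducing the direction ``$\Rightarrow$'' to Theorem~\ref{T1.7}, and recovering ``$\Leftarrow$'' by inserting \eqref{2.128} at the one place where \eqref{1.23} is actually used in the proof of Theorem~\ref{T1.7}.

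For ``$\Rightarrow$'' I would fix an integer $L$ with $L > \vr$ that also satisfies \eqref{1.31}/\eqref{1.34}, and apply Theorem~\ref{T1.7} to obtain a representation $f = \sum \mu_{j,m}\,\tilde a_{j,m}$ by classical $(K,L,C,d)$-atoms with $\|\mu\,|\,a^s_{p,q}(\rn)\| \lesssim \|f\,|\,\As(\rn)\|$. It then suffices to verify that, up to a uniform constant, each $\tilde a_{j,m}$ satisfies \eqref{2.128}. Given $\Psi \in \Cc^\vr(\rn)$, let $P$ be the Taylor polynomial of $\Psi$ of degree $\lfloor\vr\rfloor$ centred at $2^{-j}m$. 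Since $\lfloor \vr \rfloor < L$, \eqref{1.23} gives $\int P\,\tilde a_{j,m}\,\di x = 0$, hence $\int \Psi\,\tilde a_{j,m}\,\di x = \int (\Psi - P)\,\tilde a_{j,m}\,\di x$. The standard H\"older-remainder estimate bounds $|\Psi(y) - P(y)| \le c\,2^{-j\vr}\,\|\Psi\,|\,\Cc^\vr(\rn)\|$ on $dQ_{j,m}$, and integrating against $|\tilde a_{j,m}| \le C$ over a set of measure $\sim 2^{-jn}$ delivers \eqref{2.128}.

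For ``$\Leftarrow$'' I would replay the proof of Theorem~\ref{T1.7}. That proof reduces the estimate $\|f\,|\,\As(\rn)\| \lesssim \|\mu\,|\,a^s_{p,q}(\rn)\|$, via a Fefferman--Stein type vector-valued maximal inequality, to the pointwise bound
\[
\big|(\vp_k\,\wh{a_{j,m}})^\vee(x)\big| \;=\; \big|(\vp_k^\vee \ast a_{j,m})(x)\big| \;\le\; C\,2^{-|k-j|\eta}\,\big(1 + 2^{\min(j,k)}|x - 2^{-j}m|\big)^{-N},
\]
with $\eta$ chosen large enough that, when combined with the weight $2^{ks}$ and the hypothesis $\vr > \sigma - s$, the geometric series in $|k-j|$ converges. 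The regime $k \ge j$ is handled as in Theorem~\ref{T1.7} using only $|a_{j,m}| \le C$ and the rapid decay of $\vp_k^\vee$; no cancellation enters. In the regime $k < j$, where Theorem~\ref{T1.7} Taylor-expands $\vp_k^\vee(x - \cdot)$ around $2^{-j}m$ and uses \eqref{1.23} to annihilate the polynomial part, one instead applies \eqref{2.128} with the test function $\Psi_x(y) = \vp_k^\vee(x - y)$. Since $\|\Psi_x\,|\,\Cc^\vr(\rn)\| \le c\,2^{k(n+\vr)}$, \eqref{2.128} gives $|(\vp_k^\vee \ast a_{j,m})(x)| \le C\,2^{-(j-k)(n+\vr)}$, the same dyadic rate as the classical moment argument with $L = \vr$. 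The spatial decay factor $(1 + 2^k|x - 2^{-j}m|)^{-N}$ is recovered by localising $\Psi_x$ via a smooth cutoff supported in a bounded dilate of $dQ_{j,m}$ and exploiting the rapid decay of $\vp_k^\vee$.

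The main obstacle is precisely this low-frequency estimate: substituting \eqref{2.128} for \eqref{1.23} produces the correct dyadic rate $2^{-(j-k)(n+\vr)}$, but because the $\Cc^\vr(\rn)$-norm is translation invariant one must arrange the localisation of $\Psi_x$ carefully so that the spatial decay of $\vp_k^\vee$ still shows up in the final pointwise bound. Once that bound is in hand, the $\ell_q$-summation, a Peetre-type maximal function argument and the Fefferman--Stein inequality close the proof in the same manner as Theorem~\ref{T1.7}; the restriction $\vr > \sigma^n_p - s$ (respectively $\vr > \sigma^n_{p,q} - s$) imposed in \eqref{2.130} is exactly what is needed for the geometric summation to converge, and the unconditional convergence in $S'(\rn)$ follows as in Remark~\ref{R1.9}.
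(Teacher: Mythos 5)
The paper does not actually prove Proposition~\ref{P2.21}: it is recorded as a known refinement, with the observation \eqref{2.125} attributed to \cite{Skr98} and the sharper form \eqref{2.128} to \cite{Scha13}. You are therefore reconstructing an argument that the text leaves to the literature. Your overall plan matches the Scharf strategy. The necessity direction ``$\Rightarrow$'' is correct as you wrote it: classical $(K,L,C,d)$-atoms with $L > \vr$ satisfy \eqref{2.128} by subtracting the Taylor polynomial of degree $\lfloor\vr\rfloor$ centred at $2^{-j}m$, using the moment conditions \eqref{1.23} to annihilate the polynomial part and the H\"older-remainder estimate on $dQ_{j,m}$ to produce the factor $2^{-j(\vr+n)}$.

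The sufficiency direction ``$\Leftarrow$'', however, contains a genuine gap at precisely the point you flag as the ``main obstacle''. You propose to recover the spatial decay factor $(1 + 2^k|x - 2^{-j}m|)^{-N}$ in the low-frequency regime $k<j$ by multiplying $\Psi_x = \vp_k^\vee(x-\cdot)$ by a cutoff ``supported in a bounded dilate of $dQ_{j,m}$'', i.e.\ a cutoff $\chi$ at the \emph{atom} scale $2^{-j}$. That choice ruins the estimate. Such a $\chi$ has $\|D^\mu \chi\|_\infty \sim 2^{j|\mu|}$, so Leibniz produces in $\|\chi\Psi_x \,|\, \Cc^\vr(\rn)\|$ a term of size $\sim 2^{j\vr}\cdot 2^{kn}$; inserting this into \eqref{2.128} yields only $2^{-j(\vr+n)}\cdot 2^{j\vr}\cdot 2^{kn} = 2^{-(j-k)n}$. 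The crucial factor $2^{-(j-k)\vr}$, which your hypothesis $\vr > \sigma^n_p - s$ (resp.\ $\vr > \sigma^n_{p,q}-s$) is calibrated to exploit, has been lost, and the geometric series over $j-k$ no longer converges under \eqref{2.130}. The cutoff must instead live at the \emph{resolution} scale $2^{-k}$: take $\chi_k$ with $\supp \chi_k$ in a ball of radius $\sim 2^{-k}$ around $2^{-j}m$, with $\chi_k \equiv 1$ on $dQ_{j,m}$ (possible since $2^{-j} < 2^{-k}$ for $k<j$) and $|D^\gamma \chi_k| \lesssim 2^{k|\gamma|}$. Then $\chi_k$ and $\Psi_x$ vary on the same scale, Leibniz preserves the scaling $2^{k(n+|\alpha|)}$, and since $\chi_k\Psi_x$ has compact support its $\Cc^\vr$-norm is controlled by local pointwise bounds of its derivatives, so the rapid decay of $\vp_k^\vee$ gives
$\| \chi_k \Psi_x \,|\, \Cc^\vr(\rn)\| \lesssim 2^{k(n+\vr)}\big(1 + 2^k|x - 2^{-j}m|\big)^{-N'}$.
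With this corrected localisation the Peetre-maximal-function and Fefferman--Stein steps close the proof as you described; but as written, the scale of your cutoff is wrong and the sufficiency direction does not go through.
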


\begin{remark}   \label{R2.22}
As already said the above proposition modifies the classical atomic representation Theorem \ref{T1.7} by the related observations in \cite{Skr98} and \cite{Scha13}. An extension to molecular
decompositions would be desirable and very useful for what follows, but not (yet) available. For this purpose one has to insert
molecular expansions in corresponding characterizations of $\As (\rn)$ in terms of local means extending the arguments in \cite{Scha13}
from atoms to molecules.
\end{remark}

After these preparations we can now extend Theorem \ref{T2.10} to the related spaces $\As (\rn,w)$, quasi--normed according to 
\eqref{2.91}, \eqref{2.92} and their sequence counterparts according  to Definition \ref{D2.16}. Let
$k^\beta_{j,m}$ and $\Phi^\beta_{j,m}$ be the quarkonial building blocks as introduced in \eqref{2.4} and \eqref{2.11}. Let $\ve >0$
be as in \eqref{2.15} and Proposition \ref{P2.1}.

\begin{theorem}   \label{T2.23}
Let $0<w(x) \in C^\infty (\rn)$ be a weight function satisfying \eqref{2.88}, \eqref{2.89}. Let $0<p,q \le \infty$ $(p<\infty$ for
$F$--spaces$)$, $s<0$ and $\vk <\ve$.
Then $f\in S'(\rn)$ belongs to $\As (\rn,w)$ if, and only if, it can be
represented as
\begin{\eq}   \label{2.133}
f = \sum_{\substack{\beta \in \nat^n_0, j\in \no, \\ m \in \zn}} \lambda^\beta_{j,m} \, \Phi^\beta_{j,m}, \qquad \lambda \in a^s_{p,q} (\rn,w)^{\vk},
\end{\eq}
unconditional convergence being in $S'(\rn)$. Furthermore,
\begin{\eq}   \label{2.134}
\| f \, | \As (\rn,w) \| \sim \inf \| \lambda \, | a^s_{p,q} (\rn,w)^{\vk} \|
\end{\eq}
where the infimum is taken over all admissible representations \eqref{2.133}. Let
\begin{\eq}   \label{2.135}
\lambda^\beta_{j,m} (f) = 2^{jn} \, \big(f, k^\beta_{j,m} \big), \qquad f \in S' (\rn),
\end{\eq}
and
\begin{\eq}   \label{2.136}
\lambda (f) = \{ \lambda^\beta (f): \ \beta \in \nat^n_0 \}, \quad \lambda^\beta (f) = \{ \lambda^\beta_{j,m} (f): \ j \in \no, \ m\in
\zn \}.
\end{\eq}
Then $f \in \As (\rn,w)$ can be represented as
\begin{\eq}   \label{2.137}
f = \sum_{\substack{\beta \in \nat^n_0, j\in \no, \\ m \in \zn}} \lambda^\beta_{j,m}(f) \, \Phi^\beta_{j,m}, 
\end{\eq}
unconditional convergence being in $S'(\rn)$, and
\begin{\eq}   \label{2.138}
\| f \, | \As (\rn,w) \| \sim \| \lambda (f) \, | a^s_{p,q} (\rn,w)^{\vk} \|
\end{\eq}
$($equivalent quasi--norms$)$.
\end{theorem}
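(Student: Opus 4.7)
The proof plan mirrors that of Theorem 2.18, with the roles of the two quarkonial systems $\{k^\beta_{j,m}\}$ and $\{\Phi^\beta_{j,m}\}$ interchanged, exactly as Theorem 2.10 is related to Theorem 2.4 in the unweighted setting. Throughout I exploit the isomorphism $f \mapsto wf$ from $\As(\rn,w)$ onto $\As(\rn)$ recorded in \eqref{2.93}, so that each estimate can be transferred between the weighted and unweighted sides. The convergence of \eqref{2.133} and \eqref{2.137} in $S'(\rn)$ follows from the boundedness of every test--function pairing via the arguments already used for Theorems \ref{T2.10} and \ref{T2.18}.

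For the \emph{sufficiency} direction, given $f$ represented by \eqref{2.133} I write
\begin{equation*}
wf = \sum_{\beta, j, m} w(2^{-j}m)\,\lambda^\beta_{j,m}\cdot \Phi^\beta_{j,m}(x)\,\frac{w(x)}{w(2^{-j}m)},
\end{equation*}
and aim to interpret this as an admissible atomic expansion of $wf \in \As(\rn)$ in the sense of Proposition \ref{P2.21}. Since $w(x)/w(2^{-j}m)$ is smooth but globally unbounded, I first break $\Phi^\beta_{j,m}$ into the compactly supported atomic pieces $a^{L,\beta,k}_{j,m}$ of \eqref{2.40}--\eqref{2.41}, which carry $2L-1$ vanishing moments and are located near $2^{-j}(m+k)$ with the decay \eqref{2.42}. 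Factoring
$$\frac{w(x)}{w(2^{-j}m)} = \frac{w(2^{-j}(m+k))}{w(2^{-j}m)}\cdot \frac{w(x)}{w(2^{-j}(m+k))}$$
and using \eqref{2.88}, \eqref{2.89}, the first ratio contributes at most $c\,\langle k\rangle^\alpha$, while the second is a smooth multiplier on the support of $a^{L,\beta,k}_{j,m}$ with derivatives bounded uniformly in $j,m,k$. A routine estimate then shows $|\int \Psi(x)\,\phi(x)\,a^{L,\beta,k}_{j,m}(x)\,dx| \le c\,2^{-j(\vr+n)}\|\Psi\,|\,\Cc^{\vr}(\rn)\|$ for any $\vr<2L-1$, so the product $a^{L,\beta,k}_{j,m}\cdot w(\cdot)/w(2^{-j}m)$ is an admissible $\vr$--atom in the refined sense \eqref{2.128}. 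Choosing $L$ large (hence $\vr$ satisfying \eqref{2.130}, which is harmless since $s<0$) and $D$ in \eqref{2.42} large enough to dominate $\langle k\rangle^\alpha$, Proposition \ref{P2.21} applied to $wf$ yields \eqref{2.134}, with the $\beta$--summation absorbed by $\vk < \ve$ as in \eqref{2.69}.

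For the \emph{necessity} direction and the canonical representation \eqref{2.135}--\eqref{2.138}, I follow Steps 2 and 3 of the proof of Theorem \ref{T2.18}. For the $F$--spaces I invoke the localization \eqref{2.112}, split $f = \sum_l \psi_l f$ with $\psi_l f \in \Fs(\rn)$, and apply Theorem \ref{T2.10} to each $\psi_l f$. Since $k^\beta_{j,m}$ has compact support, $2^{jn}(f,k^\beta_{j,m}) = \sum_l 2^{jn}(\psi_l f, k^\beta_{j,m})$ reduces to a locally finite sum for each $(j,m)$, and the weight ratio $w(2^{-j}m)/w(l)$ is controlled by $c\,\langle 2^{-j}m-l\rangle^\alpha$ via \eqref{2.89}, which is absorbed by the decay of the atomic pieces of $\Phi^\beta_{j,m}$ as in the proof of Theorem \ref{T2.18}. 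For the $B$--spaces, where \eqref{2.112} fails when $p\neq q$, I pass to the real interpolation \eqref{2.117}, replacing $\sup_{\beta}$ in \eqref{2.99}, \eqref{2.100} by an $\ell_r$--norm (Remark \ref{R2.5}) so that $b^s_{p,q}(\rn,w)^\vk$ arises by interpolation from the diagonal spaces handled above. Combined with Step 1 this gives \eqref{2.138}.

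The main obstacle is Step 1: the molecular counterpart of Proposition \ref{P2.21} is still unavailable (Remark \ref{R2.22}), so I cannot simply declare $\Phi^\beta_{j,m}\cdot w(x)/w(2^{-j}m)$ an admissible molecule and invoke a molecular theorem. The detour through the atomic decomposition \eqref{2.40}--\eqref{2.42} and the careful balance between the $\langle k\rangle^\alpha$ loss from \eqref{2.89} and the $\langle k\rangle^{-D}$ gain in \eqref{2.42} is the technical heart of the proof; everything else follows familiar patterns from Theorems \ref{T2.10} and \ref{T2.18}.
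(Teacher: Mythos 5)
Your proposal is correct and follows the paper's argument closely: in the sufficiency step the molecules $\Phi^\beta_{j,m}$ are broken into the atomic pieces $a^{L,\beta,k}_{j,m}$ of \eqref{2.40}--\eqref{2.42}, multiplied by the weight ratio $w(x)/w(2^{-j}m)$, the loss $\langle k\rangle^\alpha$ from \eqref{2.89} is absorbed by a large $D$, and Proposition \ref{P2.21} is applied; in the necessity step one first records a priori representability via Proposition \ref{P2.19}, embeddings and \eqref{2.93}, then localizes via \eqref{2.112} for the $F$--scale and interpolates via \eqref{2.117}, \eqref{2.118} for the $B$--scale. One small correction: the constraint $\vk<\ve$ is not what makes the $\beta$--summation in the sufficiency direction converge (there the super--exponential decay of $\Phi^\beta_{j,m}$ from Proposition \ref{P2.1}(ii) works for any $\vk\in\real$, as in \eqref{2.69}); it enters only in the necessity step through the restricted decay bound \eqref{2.16} on the kernels $k^\beta_{j,m}$, exactly as in \eqref{2.70}.
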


\begin{proof}
{\em Step 1.} Let $f$ be given by \eqref{2.133}. Then
\begin{\eq}   \label{2.139}
wf = \sum_{\substack{\beta \in \nat^n_0, j\in \no, \\ m \in \zn}} w(2^{-j} m) \,
\lambda^\beta_{j,m} \cdot \Phi^\beta_{j,m} \frac{w(x)}{w(2^{-j} m)}
\end{\eq}
is the counterpart of \eqref{2.109}. We rely again on the decomposition of the classical molecules $\Phi^\beta_{j,m}$ into classical
atoms having moment conditions up to order $2L-1$ as described in \eqref{2.40}--\eqref{2.42}. We wish to apply Proposition \ref{P2.21}
based on
\begin{\eq}   \label{2.140}
\frac{w(x)}{w(2^{-j}m)} \, \Phi^\beta_{j,m} (x) = \sum_{k \in \zn} \frac{w(x)}{w(2^{-j}m)} \, a^{L,\beta,k}_{j,m} (x)
\end{\eq}
where
\begin{\eq}   \label{2.141}
\langle k \rangle^D 2^{\vk |\beta|} \, a^{L,\beta,k}_{j,m} (x) \, \frac{w(x)}{w(2^{-j}m)} \qquad \text{are $L_\infty$--normalized}
\end{\eq}
building blocks for any $D>0$ and any $\vk >0$. Here $a^{L,\beta,k}_{j,m} (x)$ are the same atoms as in \eqref{2.41}. They are 
localized at $2^{-j} (m+k)$. Then it follows from \eqref{2.89} that $w(x) \le c \, w(2^{-j} m) \langle k \rangle^\alpha$ in 
\eqref{2.141}. This show that the related additional  factor in \eqref{2.141} is well compensated by \eqref{2.42}. Otherwise one has to
justify \eqref{2.125} which means now
\begin{\eq}   \label{2.142}
\Big| \int_{\rn} \Psi (x) \, \frac{w(x)}{w(2^{-j}m)} \, a^{L,\beta,k}_{j,m} (x) \, \di x \Big| \le c\,2^{-j(L+n)} \langle k \rangle^{-D} 2^{-\vk |\beta|} \| \Psi \, | C^L (\rn) \|.
\end{\eq}
But this follows from \eqref{2.41}, \eqref{2.42}
 on the one hand and the Taylor expansion of $\Psi (x) \frac{w(x)}{w(2^{-j}m)}$ at $2^{-j} (m+k)$ with remainder term of order $L$, using in addition \eqref{2.88}, on the other hand. We insert \eqref{2.140} in \eqref{2.139}. For
fixed $\beta \in \nat^n_0$ and $k \in \zn$ one can apply Proposition \ref{P2.21} (with $\vr =L$).
Then it follows from \eqref{2.93} and a related
counterpart of \eqref{2.35} that $f\in \As (\rn,w)$ and
\begin{\eq}   \label{2.143}
\| f \, | \As (\rn, w) \| \le c \, \| \lambda \, | a^s_{p,q} (\rn, w)^{\vk} \|.
\end{\eq}
{\em Step 2.} By Proposition \ref{P2.19}, well--known embeddings for the spaces $\As (\rn)$ and \eqref{2.93} it follows that any
$f\in \As (\rn,w)$ with $s\in \real$ and $0<p,q \le \infty$ ($p<\infty$ for $F$--spaces) can be represented as
\begin{\eq}   \label{2.144}
f = \sum_{\substack{\beta \in \nat^n_0, j\in \no, \\ m \in \zn}} 2^{jn}\,\big( f, k^\beta_{j,m} \big) \Phi^\beta_{j,m}.
\end{\eq}
Similarly as in Step 2 of the proof of Theorem \ref{T2.18} we deal first with the $F$--spaces where now $s<0$, $0<p<\infty$ and $0<q
\le \infty$ (and $s<0$, $p=q=\infty$). Then
\begin{\eq}   \label{2.145}
\lambda^\beta_{j,m} (f) = \sum_{l\in \zn} 2^{jn} \, \big( \psi_l f, k^\beta_{j,m} \big)
\end{\eq}
is the counterpart of \eqref{2.113}. Now it follows by the same arguments as in Step 2 of the proof of Theorem \ref{T2.10} combined
with \eqref{2.115} that
\begin{\eq}  \label{2.146}
\| \lambda (f) \, | f^s_{p,q} (\rn, w)^{\vk} \| \le c \, \| f \, | \Fs (\rn, w) \|.
\end{\eq}
This is the point where $\vk < \ve$ is needed. Then \eqref{2.143} and \eqref{2.146} prove both \eqref{2.134} and \eqref{2.138} for the
$F$--spaces. The extension of this assertion to the $B$--spaces can now be done by interpolation as in Step 3 of the proof of Theorem
\ref{T2.18}.
\end{proof} 

\begin{remark}   \label{R2.24}
We return to the representability 
\begin{\eq}   \label{2.147}
f = \sum_{\substack{\beta \in \nat^n_0, j\in \no, \\ m \in \zn}} \lambda^\beta_{j,m} (f)\, k^\beta_{j,m}, 
\end{\eq}
for the spaces covered by Theorem \ref{T2.18}. Formally this is justified so far only for these spaces if $w(x) \ge c >0$. But this is
sufficient to ensure by duality and embedding the representability  \eqref{2.122} and \eqref{2.137} for all admissible weights $w$
satisfying \eqref{2.88}, \eqref{2.89}. Now we reverse the duality argument. According to \cite[Theorem 2.11.2, p.\,178]{T83} one has
\begin{\eq}   \label{2.148}
B^s_{p,q} (\rn)' = B^{-s}_{p',q'} (\rn), \qquad s\in \real \quad \text{and} \quad 1\le p,q <\infty,
\end{\eq}
where as usual $\frac{1}{p} + \frac{1}{p'} = \frac{1}{q} + \frac{1}{q'} =1$. By the arguments preceding Proposition \ref{P2.19} one
can extend \eqref{2.148} in the framework of the dual pairing $\big( S(\rn), S'(\rn) \big)$ to
\begin{\eq}   \label{2.149}
\Bs (\rn,w)' = B^{-s}_{p',q'} (\rn, w^{-1}), \qquad s\in \real \quad \text{and} \quad 1\le p,q< \infty
\end{\eq}
for any admissible weight function $w$. Let $s<0$. 
Then it follows from Theorem \ref{T2.23} that any $g\in \Bs (\rn, w)$ can be represented as
\begin{\eq}   \label{2.150}
g = \sum_{\substack{\beta \in \nat^n_0, j\in \no, \\ m \in \zn}} 2^{jn}\,\big( g, k^\beta_{j,m} \big) \Phi^\beta_{j,m},
\end{\eq}
converging unconditionally in $\Bs (\rn,w)$. Now one can argue as in \eqref{2.58} where $k^\beta_{j,m}$ and $\Phi^\beta_{j,m}$ change
their roles. This shows that any $f\in B^{-s}_{p',q'} (\rn, w^{-1})$ with $-s >0$ and $1< p', q' \le \infty$ can be represented in the
weak*--topology of $B^{-s}_{p',q'} (\rn, w^{-1})$ as the unconditionally converging series 
\begin{\eq}   \label{2.151}
f = \sum_{\substack{\beta \in \nat^n_0, j\in \no, \\ m \in \zn}} 2^{jn}\,\big( f, \Phi^\beta_{j,m} \big) k^\beta_{j,m}.
\end{\eq}
This ensures by embedding the unconditional expansion \eqref{2.104}--\eqref{2.106} in $S'(\rn)$ for all spaces $\As (\rn,w)$ covered
by Theorem \ref{T2.18} and all admissible weight functions $w$.
\end{remark}

\subsection{Weighted H\"{o}lder--Zygmund spaces}   \label{S2.5}                   
We wish to use the above quarkonial
representations in the spaces $\As (\rn)$ and their weighted generalizations $\As (\rn, w)$ to say something
similar for $S(\rn)$ and, in particular, $S'(\rn)$. First, but not really satisfactory, attempts may be found in \cite[Section 8.7,
pp.\,108--111]{T01}. We try to simplify related assertions as much as possible. This suggests to concentrate on the weighted 
H\"{o}lder--Zygmund spaces
\begin{\eq}   \label{2.152}
\Cc^s (\rn, \delta) = B^s_{\infty, \infty} (\rn, w_\delta), \qquad s\in \real, \quad \delta \in \real,
\end{\eq}
with $w_\delta (x) = (1 + |x|^2)^{\delta/2}$ as in \eqref{2.94}. In specification of \eqref{2.91} the space $\Cc^s 
(\rn, \delta)$ is the collection of all $f\in S'(\rn)$ such that
\begin{\eq}  \label{2.153}
\| f \, | \Cc^s (\rn, \delta) \|_{\vp} = \sup_{j\in \no, x\in \rn} (1 + |x|)^\delta \, 2^{js} \, \big| (\vp_j \wh{f} \, )^\vee (x)
\big|
\end{\eq}
is finite (equivalent norms). If $\delta =0$ then $\Cc^s (\rn,0) = \Cc^s (\rn)$ are the classical H\"{o}lder--Zygmund spaces according
to \eqref{1.14}. One has by \eqref{2.93}
\begin{\eq}   \label{2.154}
\|f\, | \Cc^s(\rn, \delta) \| \sim \| w_\delta f \, | \Cc^s (\rn) \| \sim 
\sup_{j \in \no, x\in \rn} 2^{js} \big| (\vp_j \wh{w_\delta f})^\vee (x) \big|.
\end{\eq}
Let
\begin{\eq}   \label{2.155}
\text{\ccd} = b^s_{\infty,\infty} (\rn, w_\delta)^0, \qquad s\in \real, \quad \delta \in \real,
\end{\eq}
be the specification of \eqref{2.99}, extending \eqref{2.46}, \eqref{2.47} to its weighted generalization. It is the collection of all
sequences
\begin{\eq}  \label{2.156}
\lambda = \big\{ \lambda^\beta_{j,m} \in \comp: \ \beta \in \nat^n_0, \ j \in \no, \ m\in \zn \big\}
\end{\eq} 
such that 
\begin{\eq}   \label{2.157}
\| \lambda \, | \text{\ccd} \| = \sup_{\substack{\beta \in \nat^n_0, j\in \no, \\ m \in \zn}} 2^{js}\, (1+ 2^{-j} |m|)^\delta \,
|\lambda^\beta_{j,m}|
\end{\eq}
is finite. We specify the Theorems \ref{T2.18} and \ref{T2.23} to \eqref{2.152}. This extends also Corollary \ref{C2.6} to the related
weighted spaces and complements Proposition \ref{P2.19}. Let again $k^\beta_{j,m}$ and $\Phi^\beta_{j,m}$ be the quarkonial building
blocks as introduced in \eqref{2.4} and \eqref{2.11}.

\begin{corollary}   \label{C2.25}
Let $s>0$ and $\delta \in \real$. Then $f\in S'(\rn)$ belongs to $\Cc^s (\rn, \delta)$ if, and only if, it can be represented as
\begin{\eq}  \label{2.158}
f = \sum_{\substack{\beta \in \nat^n_0, j\in \no, \\ m \in \zn}} \lambda^\beta_{j,m} \, k^\beta_{j,m}, \qquad \lambda \in \text{\ccd},
\end{\eq}
unconditional convergence being in $S'(\rn)$. Furthermore,
\begin{\eq}   \label{2.159}
\| f \, | \Cc^s (\rn,\delta) \| \sim \inf \| \lambda \, | \text{\ccd} \|
\end{\eq}
where the infimum is taken over all admissible representations  \eqref{2.158}. Let
\begin{\eq}   \label{2.160}
\lambda^\beta_{j,m} (f) = 2^{jn} \big( f , \Phi^\beta_{j,m} \big), \qquad f \in S'(\rn),
\end{\eq}
and
\begin{\eq}   \label{2.161}
\lambda (f) = \big\{ \lambda^\beta_{j,m} (f): \ \beta \in \nat^n_0, \  j\in \no, \ m\in \zn \big\}.
\end{\eq}
Then $f\in \Cc^s (\rn,\delta)$ can be represented as
\begin{\eq}   \label{2.162}
f = \sum_{\substack{\beta \in \nat^n_0, j\in \no, \\ m \in \zn}} \lambda^\beta_{j,m} (f) \, k^\beta_{j,m}, 
\end{\eq}
unconditional convergence being in $S'(\rn)$, and
\begin{\eq}   \label{2.163}
\| f \, | \Cc^s (\rn,\delta) \| \sim \| \lambda (f) \, |  \text{\ccd} \|
\end{\eq}
$($equivalent norms$)$.
\end{corollary}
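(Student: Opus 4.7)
The plan is to derive this corollary as a direct specialization of Theorem \ref{T2.18} to the case
\[
A = B, \qquad p = q = \infty, \qquad w = w_\delta, \qquad \vk = 0.
\]
First I would verify the hypotheses of Theorem \ref{T2.18} in this specialization: by \eqref{2.26} one has $\sigma^n_\infty = 0$, so the standing assumption $s > 0$ gives exactly $s > \sigma^n_p$ for $B$--spaces; next, $\vk = 0 > -\ve$ since $\ve > 0$; finally, $w_\delta(x) = (1+|x|^2)^{\delta/2}$ is a smooth positive weight satisfying the admissibility conditions \eqref{2.88} and \eqref{2.89} (with $\alpha = |\delta|$), as is standard.

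The core identification to make is that the sequence space $b^s_{\infty,\infty}(\rn, w_\delta)^0$ from Definition \ref{D2.16} coincides, up to equivalence of norms, with $\text{\ccd}$ as defined in \eqref{2.157}. Setting $p=q=\infty$ and $\vk = 0$ in the formula \eqref{2.99}, the two outer $\ell_q$ and inner $\ell_p$ summations both collapse to suprema, yielding
\[
\sup_{\beta \in \nat^n_0} \sup_{j \in \no} 2^{js}\, \sup_{m \in \zn} w_\delta(2^{-j}m)\, |\lambda^\beta_{j,m}|.
\]
Since $w_\delta(2^{-j}m) = (1+|2^{-j}m|^2)^{\delta/2}$ is pointwise equivalent to $(1 + 2^{-j}|m|)^\delta$ with constants depending only on $\delta$ and $n$, this expression is equivalent to the norm \eqref{2.157} on $\text{\ccd}$.

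Once this identification is in place, the representability \eqref{2.158} together with the equivalence \eqref{2.159} is nothing but the specialization of \eqref{2.102}, \eqref{2.103}, and the unconditional $S'(\rn)$--convergence is inherited verbatim. Likewise, the dual formulas \eqref{2.160}, \eqref{2.162}, and the norm equivalence \eqref{2.163} are the specialization of \eqref{2.104}--\eqref{2.107} in Theorem \ref{T2.18}, with the coefficients being the local means against the companion quarks $\Phi^\beta_{j,m}$. Since every ingredient has been established in the weighted $B$--setting of Theorem \ref{T2.18} and the present corollary introduces no new phenomenon, there is no substantial obstacle; the only verification is the routine (but notationally worth recording) identification of the sequence space norm given above.
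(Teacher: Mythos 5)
Your argument matches the paper's proof exactly: the paper simply states that Corollary~\ref{C2.25} follows from Theorem~\ref{T2.18} with $\vk=0$ specialized to \eqref{2.152} and \eqref{2.157}. Your additional care in verifying $\sigma^n_\infty = 0$, the admissibility of $w_\delta$, and the identification of $b^s_{\infty,\infty}(\rn,w_\delta)^0$ with $\Cc^s(\rn,\delta)$ via the equivalence $w_\delta(2^{-j}m)\sim(1+2^{-j}|m|)^\delta$ is correct and makes explicit the routine checks the paper leaves implicit.
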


\begin{proof}
This follows from Theorem \ref{T2.18} with $\vk =0$ specified to \eqref{2.152} and \eqref{2.157}.
\end{proof}

\begin{corollary}   \label{C2.26}
Let $s<0$ and $\delta \in \real$. Then $f\in S'(\rn)$ belongs to $\Cc^s (\rn, \delta)$ if, and only if, it can be represented as
\begin{\eq}  \label{2.164}
f = \sum_{\substack{\beta \in \nat^n_0, j\in \no, \\ m \in \zn}} \lambda^\beta_{j,m} \, \Phi^\beta_{j,m}, \qquad \lambda \in \text{\ccd},
\end{\eq}
unconditional convergence being in $S'(\rn)$. Furthermore,
\begin{\eq}   \label{2.165}
\| f \, | \Cc^s (\rn,\delta) \| \sim \inf \| \lambda \, | \text{\ccd} \|
\end{\eq}
where the infimum is taken over all admissible representations  \eqref{2.164}. Let
\begin{\eq}   \label{2.166}
\lambda^\beta_{j,m} (f) = 2^{jn} \big( f , k^\beta_{j,m} \big), \qquad f \in S'(\rn),
\end{\eq}
and
\begin{\eq}   \label{2.167}
\lambda (f) = \big\{ \lambda^\beta_{j,m} (f): \ \beta \in \nat^n_0, \  j\in \no, \ m\in \zn \big\}.
\end{\eq}
Then $f\in \Cc^s (\rn,\delta)$ can be represented as
\begin{\eq}   \label{2.168}
f = \sum_{\substack{\beta \in \nat^n_0, j\in \no, \\ m \in \zn}} \lambda^\beta_{j,m} (f) \, \Phi^\beta_{j,m}, 
\end{\eq}
unconditional convergence being in $S'(\rn)$, and
\begin{\eq}   \label{2.169}
\| f \, | \Cc^s (\rn,\delta) \| \sim \| \lambda (f) \, |  \text{\ccd} \|
\end{\eq}
$($equivalent norms$)$.
\end{corollary}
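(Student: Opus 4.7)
The plan is to deduce Corollary \ref{C2.26} as a direct specialization of Theorem \ref{T2.23} in exactly the way Corollary \ref{C2.25} is obtained from Theorem \ref{T2.18}. I would apply Theorem \ref{T2.23} with $A=B$, $p=q=\infty$, $w=w_\delta$, and $\vk=0$. The condition $\vk<\ve$ of Theorem \ref{T2.23} is trivially satisfied since $\ve>0$, and the hypothesis $s<0$ is shared. By \eqref{2.152} we have $\Cc^s(\rn,\delta)=B^s_{\infty,\infty}(\rn,w_\delta)$, so the representations \eqref{2.133} and \eqref{2.137} of Theorem \ref{T2.23} specialize immediately to \eqref{2.164} and \eqref{2.168}, and the coefficient rule \eqref{2.135} becomes \eqref{2.166}. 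The unconditional convergence in $S'(\rn)$ is inherited verbatim.

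The only point that requires an explicit check is that the sequence space $b^s_{\infty,\infty}(\rn,w_\delta)^{0}$ from Definition \ref{D2.16} coincides, up to equivalent norms, with $\text{\ccd}$ as defined in \eqref{2.155}--\eqref{2.157}. Inserting $p=q=\infty$ and $\vk=0$ into \eqref{2.99} gives
\begin{equation*}
\|\lambda \,|\, b^s_{\infty,\infty}(\rn,w_\delta)^{0}\| = \sup_{\beta\in\nat^n_0,\, j\in\no,\, m\in\zn} 2^{js}\, w_\delta(2^{-j}m)\, |\lambda^\beta_{j,m}|.
\end{equation*}
Since $w_\delta(2^{-j}m)=(1+|2^{-j}m|^2)^{\delta/2}\sim (1+2^{-j}|m|)^{|\delta|\,\mathrm{sgn}\,\delta}$, with equivalence constants depending only on $\delta$, this norm is equivalent to \eqref{2.157}. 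With this identification, \eqref{2.134} becomes \eqref{2.165} and \eqref{2.138} becomes \eqref{2.169}.

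There is no real obstacle here: all the heavy lifting, including the molecular decomposition of the $\Phi^\beta_{j,m}$ into atoms with sufficient moment conditions, the weighted atomic estimate \eqref{2.143}, the local means estimate \eqref{2.146} for the $F$--scale, and the interpolation step for the $B$--scale (which includes the endpoint $p=q=\infty$), has already been carried out in the proof of Theorem \ref{T2.23}. The only delicate item, if any, is the equivalence of the two forms of the weight on the dyadic lattice, which is elementary. Hence Corollary \ref{C2.26} follows as an immediate corollary, exactly parallel to how Corollary \ref{C2.25} specialized Theorem \ref{T2.18}.
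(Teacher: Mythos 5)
Your proof is correct and follows exactly the route the paper takes: the paper's entire proof of Corollary \ref{C2.26} is the one line ``This follows from Theorem \ref{T2.23} with $\vk=0$ specified to \eqref{2.152} and \eqref{2.157}.'' Your additional explicit check that $b^s_{\infty,\infty}(\rn,w_\delta)^0$ from \eqref{2.99} reduces to \ccd{} via $(1+|x|^2)^{\delta/2}\sim(1+|x|)^\delta$ is exactly the elementary verification the paper leaves implicit.
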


\begin{proof}
This follows from Theorem \ref{T2.23} with $\vk =0$ specified to \eqref{2.152} and \eqref{2.157}.
\end{proof}

\begin{remark}    \label{R2.27}
We inserted the above corollaries mainly as a preparation of what follows. One has by \eqref{2.149} the duality
\begin{\eq}   \label{2.170}
B^s_{1,1} (\rn, \delta)' = \Cc^{-s} (\rn, \delta^{-1} ), \qquad s\in \real, \quad \delta \in \real.
\end{\eq}
Then it follows by the same arguments as in the proof of Proposition \ref{P2.9}, now based on the Theorems \ref{T2.18}, \ref{T2.23}
and the explanations in Remark \ref{R2.24} that the unconditional convergence in $S'(\rn)$ both of \eqref{2.162} and \eqref{2.168}
can be strengthened by the unconditional convergence in the weak*--topology of the respective spaces $\Cc^s (\rn, \delta)$.
\end{remark}

\begin{problem}   \label{P2.28}
What about $\Cc^0 (\rn, \delta)$ with $\delta \in \real$?
\end{problem}

\subsection{Universal representations}    \label{S2.6}
We wish to extend the quarkonial representations for the spaces $\As (\rn)$ as described in the Sections \ref{S2.2} and \ref{S2.3} to
$S(\rn)$ and, in particular, to $S'(\rn)$. For this purpose one can rely on the related expansions for the weighted spaces $\As (\rn,w)$
according to Section \ref{S2.4}. We concentrate on the simplest case. These are the Corollaries \ref{C2.25} and \ref{C2.26} dealing with
the weighted H\"{o}lder--Zygmund spaces $\Cc^s (\rn, w)$. But first we adopt a more general point of 
view collecting a few well--known basic properties.

Let $w_\delta (x) = (1+ |x|^2)^{\delta/2}$, $x\in \rn$, $\delta \in \real$, be as in \eqref{2.94}. Let
\begin{\eq}   \label{2.171}
\Bs (\rn, w_\delta), \qquad s\in \real, \quad \delta \in \real \quad \text{and} \quad 0<p,q \le \infty,
\end{\eq}
be the weighted spaces as introduced at the beginning of Section \ref{S2.4} with the remarkable property \eqref{2.93}. In particular, for fixed $p$ and $q$ with $0<p,q \le \infty$ these spaces are monotone with respect to $\delta$ and $s$. Furthermore,
\begin{\eq}   \label{2.172}
S(\rn) = \bigcap_{\delta \in \real, s \in \real} \Bs (\rn, w_\delta), \qquad S'(\rn) = \bigcup_{\delta \in \real, s \in \real} \Bs (\rn, w_\delta).
\end{\eq}
This may be found in \cite[(2.281), p.\,74]{T20} with a reference to \cite{Kab08}. But it has already been mentioned in \cite[(77),
(78), p.\,316]{Tri00} and \cite[Section 8.2, pp.\,102--104]{T01}.
Of interest for us is the case $p=q=\infty$,
\begin{\eq}   \label{2.173}
S(\rn) = \bigcap_{\delta \in \real, s\in \real} \Cc^s (\rn, \delta), \qquad S' (\rn) = \bigcup_{\delta \in \real, s\in \real} \Cc^s (\rn, \delta ),
\end{\eq}
where we used the notation \eqref{2.152}.  
At least the first assertion is rather obvious. Let $C^N (\rn, \delta )$, $N \in \no$, $\delta >0$, be the Banach spaces normed by
\begin{\eq}   \label{2.174}
\| f \, | C^N (\rn, \delta) \| = \sup_{|\beta| \le N, x\in \rn} w_\delta (x) |D^\beta f(x) | \sim \sup_{|\beta| \le N, x\in \rn}
| D^\beta (w_\delta f)(x)|.
\end{\eq}
These are the generating norms of $S(\rn)$. Then the first assertion in \eqref{2.173} follows from
\begin{\eq}   \label{2.195}
\begin{aligned}
\Cc^{N+1} (\rn, \delta) &\hra B^N_{\infty,1} (\rn, w_\delta) \\
&\hra C^N(\rn, \delta) \hra B^N_{\infty, \infty}(\rn, w_\delta) =\Cc^N (\rn, \delta),
\end{aligned}
\end{\eq}
using \eqref{2.152}, \eqref{2.93} and  \eqref{2.174} for the  $C$--spaces.

Let again  $k^\beta_{j,m}$ and $\Phi^\beta_{j,m}$ be the quarkonial building blocks as introduced in \eqref{2.4} and \eqref{2.11},
\begin{\eq}   \label{2.176}
\lambda^\beta_{j,m} (f) = 2^{jn} \, \big(f, \Phi^\beta_{j,m} \big), \qquad f \in S'(\rn),
\end{\eq}
and
\begin{\eq}   \label{2.177}
\lambda (f) = \big\{ \lambda^\beta_{j,m} (f): \ \beta \in \nat^n_0, \ j \in \no, \ m\in \zn \big\}.
\end{\eq}
Let $\text{\ccd}$ be the sequence spaces according to \eqref{2.156}, \eqref{2.157}.

\begin{theorem}   \label{T2.29}
Let $f\in S'(\rn)$. Then $f\in S(\rn)$ if, and only if, it can be represented for any $s>0$ and any $\delta >0$ as
\begin{\eq}   \label{2.178}
f = \sum_{\substack{\beta \in \nat^n_0, j\in \no, \\ m \in \zn}} \lambda^\beta_{j,m} (f) \, k^\beta_{j,m}, \qquad \lambda (f) \in
\text{\ccd}.
\end{\eq}
\end{theorem}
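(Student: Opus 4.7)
The plan is to deduce Theorem 2.29 directly from Corollary 2.25 together with the intersection characterization of $S(\rn)$ recorded in \eqref{2.173} and \eqref{2.195}. The key observation is that the coefficient sequence $\lambda(f) = \{2^{jn}(f,\Phi^\beta_{j,m})\}$ is defined by a single formula \eqref{2.176} that depends only on $f\in S'(\rn)$, not on $s$ or $\delta$. So the whole statement boils down to checking membership of this fixed sequence in every $\text{\ccd}$ with $s,\delta>0$.

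For the "only if" direction, assume $f\in S(\rn)$. The embedding chain \eqref{2.195} together with the standard generating seminorms \eqref{2.174} of $S(\rn)$ shows that $f\in \Cc^s(\rn,\delta)$ for every $s>0$ and every $\delta>0$. Fix such $s$ and $\delta$. Then Corollary \ref{C2.25} applies and yields the representation \eqref{2.162} with $\lambda(f)\in \text{\ccd}$, unconditional convergence in $S'(\rn)$, together with the equivalence \eqref{2.163}. Since $\lambda(f)$ is independent of the chosen $(s,\delta)$, this proves that \eqref{2.178} holds simultaneously for all admissible $(s,\delta)$.

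For the "if" direction, assume $f\in S'(\rn)$ admits the representation \eqref{2.178} with $\lambda(f)\in \text{\ccd}$ for every $s>0$ and every $\delta>0$. Applying Corollary \ref{C2.25} in the reverse direction to each pair $(s,\delta)$ yields $f\in \Cc^s(\rn,\delta)$ for all $s,\delta>0$. By the first identity in \eqref{2.173}, which characterizes $S(\rn)$ as the intersection over all $s,\delta$ of the weighted H\"{o}lder--Zygmund spaces (and which is already justified via \eqref{2.195}), we conclude $f\in S(\rn)$.

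There is no serious technical obstacle here: the heavy lifting has been done in Corollary \ref{C2.25} and in the identification \eqref{2.173}. The only point worth emphasising explicitly in the write-up is that the coefficients $\lambda^\beta_{j,m}(f)$ are intrinsic to $f$, so "the representation \eqref{2.178} exists for every $(s,\delta)$" is genuinely equivalent to "the one intrinsic sequence $\lambda(f)$ lies in $\bigcap_{s,\delta>0}\text{\ccd}$", which one may view as the discrete Schwartz condition mirroring \eqref{2.173}.
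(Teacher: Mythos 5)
Your argument is correct and follows the paper's own proof exactly: combine the intersection characterization $S(\rn)=\bigcap_{s,\delta}\Cc^s(\rn,\delta)$ from \eqref{2.173} with the frame representation of Corollary \ref{C2.25} and the observation that $\lambda(f)$ is independent of $(s,\delta)$. The only point the paper states explicitly and you leave implicit is the monotonicity of $\Cc^s(\rn,\delta)$ in $s$ and $\delta$, which is what allows you to replace the intersection over all $s,\delta\in\real$ in \eqref{2.173} by the intersection over $s>0$, $\delta>0$ appearing in the theorem; this is routine and does not affect the validity of the argument.
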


\begin{proof}
This follows immediately from the first assertion in \eqref{2.173}, the monotonicity of the spaces $\Cc^s (\rn, \delta)$ with respect
to $s$ and $\delta$, and Corollary \ref{C2.25}.
\end{proof}

Similarly one can employ the second assertion in \eqref{2.173}.

\begin{theorem}   \label{T2.30}
Any $f\in S'(\rn)$ can be represented as
\begin{\eq}   \label{2.179}
f = \sum_{\substack{\beta \in \nat^n_0, j\in \no, \\ m \in \zn}} 2^{jn} \, \big(f, k^\beta_{j,m} \big) \, \Phi^\beta_{j,m}, 
\end{\eq}
unconditional convergence being in $S'(\rn)$.
\end{theorem}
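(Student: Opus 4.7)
The plan is to reduce \eqref{2.179} directly to Corollary \ref{C2.26} via the second identity in \eqref{2.173}. Given $f \in S'(\rn)$, that identity (which is the substantive input, taken from \cite{Kab08} and the earlier references recalled just above) guarantees the existence of parameters $s_0 \in \real$ and $\delta_0 \in \real$ with $f \in \Cc^{s_0}(\rn, \delta_0)$. Because the scale $\Cc^s (\rn, \delta)$ is monotone with respect to both $s$ and $\delta$ (this follows at once from \eqref{2.153} since the weights $w_\delta$ and the dyadic factors $2^{js}$ are monotone), we may lower $s_0$ and $\delta_0$ freely. In particular, choose $s<0$ and any $\delta$ with $s \le s_0$, $\delta \le \delta_0$; then $f \in \Cc^s (\rn, \delta)$.

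Next I apply Corollary \ref{C2.26} to this $f$. It produces the expansion
\begin{\eq}   \label{univ1}
f = \sum_{\substack{\beta \in \nat^n_0, j\in \no, \\ m \in \zn}} \lambda^\beta_{j,m}(f)\, \Phi^\beta_{j,m},
\qquad \lambda^\beta_{j,m}(f) = 2^{jn}\bigl(f, k^\beta_{j,m}\bigr),
\end{\eq}
with $\lambda(f) \in \text{\ccd}$ and unconditional convergence in $S'(\rn)$. Since $k^\beta_{j,m}$ is a compactly supported $C^\infty$ function on $\rn$ and thus belongs to $S(\rn)$, the dual pairing $(f, k^\beta_{j,m})$ is well defined for every $f \in S'(\rn)$ independently of which space $\Cc^s(\rn, \delta)$ we used to access $f$; the coefficients in \eqref{univ1} therefore match those in the statement of Theorem \ref{T2.30}. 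This proves the theorem.

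There is no real obstacle: all the work has already been done in Sections \ref{S2.4} and \ref{S2.5}. The only point worth emphasising is that Corollary \ref{C2.26} requires $s<0$, which forces us to use the second identity of \eqref{2.173} together with monotonicity rather than Corollary \ref{C2.25} (whose expansion is of the form \eqref{2.162}, with the roles of $k^\beta_{j,m}$ and $\Phi^\beta_{j,m}$ interchanged). This choice of direction — using the molecules $\Phi^\beta_{j,m}$ as reconstruction blocks and the quarks $k^\beta_{j,m}$ as analysing functionals — is exactly what matches the shape of \eqref{2.179}, and is natural given that arbitrary $f \in S'(\rn)$ may be very rough, so one must use $s<0$ rather than $s>0$.
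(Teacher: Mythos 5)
Your argument is correct and coincides with the paper's own proof: Theorem \ref{T2.30} is obtained there precisely by combining the second identity in \eqref{2.173} with the monotonicity of the scale $\Cc^s(\rn,\delta)$ to place $f$ in some $\Cc^s(\rn,\delta)$ with $s<0$, and then invoking Corollary \ref{C2.26} (the paper additionally cites Remark \ref{R2.27} only to strengthen the mode of convergence, which is not needed for the statement as formulated). Your remark that the coefficients $2^{jn}(f,k^\beta_{j,m})$ are intrinsically defined since $k^\beta_{j,m}\in S(\rn)$ is a sensible clarification but does not change the route.
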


\begin{proof} If $f \in S'(\rn)$ then it follows from \eqref{2.173} and the monotonicity of the spaces $\Cc^s (\rn, \delta)$ with
respect to $s$ and $\delta$ that
\begin{\eq}  \label{2.180}
f \in \Cc^s (\rn, \delta) \qquad \text{for some $s<0$ and $\delta <0$.}
\end{\eq}
Then one can apply Corollary \ref{C2.26} and the comments about convergence in Remark \ref{R2.27}.
\end{proof}

\section{Applications}    \label{S3}
\subsection{Pointwise multipliers}  \label{S3.1}
One may ask for applications of the above quarkonial expansions. First we are interested to employ Theorem \ref{T2.10} dealing with the corresponding  representations
\begin{\eq}   \label{3.1}
f = \sum_{\substack{\beta \in \nat^n_0, j\in \no, \\ m \in \zn}} 2^{jn} \big(f, k^\beta_{j,m} \big) \, \Phi^\beta_{j,m}
\end{\eq}
for the spaces
\begin{\eq}   \label{3.2}
\As (\rn) \qquad \text{with $A\in \{B,F \}$, \ $s<0$ and $0<p,q \le \infty$}
\end{\eq}
($p<\infty$ for $F$--spaces). If $T$ is a linear operator then
\begin{\eq}   \label{3.3}
Tf = \sum_{\substack{\beta \in \nat^n_0, j\in \no, \\ m \in \zn}} 2^{jn} \big( f, k^\beta_{j,m} \big) T\Phi^\beta_{j,m}, 
\end{\eq}
may be considered first for finite sums, discussing convergence afterwards. This reduces mapping properties of $T$ in or between spaces
$\As (\rn)$ according to \eqref{3.2} to sufficiently strong properties of $T \Phi^\beta_{j,m}$. At the best $T \Phi^\beta_{j,m}$ are
closely related to $L_\infty$--normalized molecules as described Remark \ref{R1.9}. There one finds some comments and references. But
the request that the moment conditions \eqref{1.37} remain valid if one replaces $b_{j,m}$ by $Tb_{j,m}$ is very restrictive. 
We described in Proposition \ref{P2.21} and Remark \ref{R2.22} with a reference to
\cite{Skr98} and \cite{Scha13} how these conditions can be relaxed at least on an atomic level. This requires that one reduces
$T \Phi^\beta_{j,m}$ to related so--called $\vr$--atoms. This has already been done in Step 1 of the proof of Theorem \ref{T2.23} in
a slightly different but near--by context. But this type of arguments can also be used for other purposes, for example, pointwise
multipliers.

After this discussion we return to \eqref{3.3} where $T$ stands for the multiplication of $f$ according to \eqref{3.1} with $g \in
\Cc^{\vr} (\rn) = B^{\vr}_{\infty, \infty} (\rn)$, $\vr >0$. In other words, we ask for conditions ensuring that
\begin{\eq}   \label{3.4}
gf = \sum_{\substack{\beta \in \nat^n_0, j\in \no, \\ m \in \zn}} 2^{jn} \big(f, k^\beta_{j,m} \big) \, g \Phi^\beta_{j,m}
\end{\eq}
makes sense in a given space $\As (\rn)$ with \eqref{3.2} such that $g$ is a pointwise multiplier,
\begin{\eq}   \label{3.5}
\| gf \, | \As (\rn) \| \le c\, \| g \, | \Cc^{\vr} (\rn)\| \cdot \| f \, | \As (\rn) \|
\end{\eq}
for some $c>0$, all $g\in \Cc^{\vr} (\rn)$ and all $f\in \As (\rn)$. We rely on the expansion in Theorem \ref{T2.10} with $\vk =0$,
in particular
\begin{\eq}   \label{3.6}
f = \sum_{\substack{\beta \in \nat^n_0, j\in \no, \\ m \in \zn}} \lambda^\beta_{j,m}(f) \, \Phi^\beta_{j,m}
\end{\eq}
where
\begin{\eq}   \label{3.7}
\lambda^\beta_{j,m} (f) = 2^{jn} \, \big(f, k^\beta_{j,m} \big), \qquad f \in S' (\rn),
\end{\eq}
and
\begin{\eq}   \label{3.8}
\| f \, | \As (\rn) \| \sim \| \lambda (f) \, | a^s_{p,q} (\rn)^0 \|
\end{\eq}
with $a^s_{p,q} (\rn)^0$ as in Definition \ref{D2.2} and Remark \ref{R2.3}. 
First we say what is meant by \eqref{3.4}. Let
\begin{\eq}   \label{3.9}
(gf)_N = \sum_{|\beta| + |m| +j \le N} \lambda^\beta_{j,m}(f) \, g \,\Phi^\beta_{j,m}, \qquad N \in \nat.
\end{\eq}

\begin{definition}   \label{D3.1}
Let $\As (\rn)$ be the spaces according to \eqref{3.2} with $p<\infty$ for $F$--spaces. Let $g\in \Cc^{\vr} (\rn)$ with $\vr >0$. 
Then $g$ is said to be a pointwise multiplier for $\As (\rn)$ if $(gf)_N$ converges in the weak*--topology in $S'(\rn)$ to a limit 
denoted as $gf$,
\begin{\eq}   \label{3.10}
(gf)_N (\vp) \to (gf)(\vp) \qquad \text{for any} \quad \vp \in S(\rn),
\end{\eq}
and
\begin{\eq}   \label{3.11}
\sup_{N \in \nat}  \big\| (gf)_N \, | \As (\rn) \big\| \le C_g \, \| f \, | \As (\rn) \|
\end{\eq}
for some $C_g >0$ and all $f\in \As (\rn)$.
\end{definition}

\begin{remark}   \label{R3.2}
This is a modification of what usually is called {\em smooth pointwise multipliers} adapted to our situation and purposes. Of course, 
\eqref{3.9} makes sense pointwise. All spaces $\As (\rn)$ have the Fatou property as described in \cite[Section 1.3.4, p.\,18]{T20}, based on related references.  Then it follows from \eqref{3.10}, \eqref{3.11} that
\begin{\eq}   \label{3.12}
f \mapsto gf \quad \text{generates a linear and bounded map in $\As (\rn)$}
\end{\eq}
and
\begin{\eq}   \label{3.13}
\| gf \, | \As (\rn) \| \le c \, C_g \, \|f \, |\As(\rn) \|, \qquad f \in \As (\rn),
\end{\eq}
where $c$ is independent of $g$ (and  $f$). Otherwise we refer the reader to \cite[Section 2.4.1, pp.\,40--43]{T20}. There one finds
detailed references to the rich history of this problem and also a description of some distinguished results. This will not be repeated here. We wish
to recover some already known assertions ensuring that $g \Phi^\beta_{j,m}$ in \eqref{3.9} may serve as building blocks similar to
related molecules. Let again
\begin{\eq}   \label{3.14}
\sigma^n_p = n \Big( \max \big( \frac{1}{p}, 1 \big) -1 \Big), \qquad \sigma^n_{p,q} = n \Big( \max \big( \frac{1}{p}, \frac{1}{q}, 1
\big) - 1 \Big),
\end{\eq}
where $0<p,q \le \infty$ as in \eqref{1.30}. Recall that $\Cc^{\vr} (\rn) = B^{\vr}_{\infty, \infty} (\rn)$. 
\end{remark}

\begin{theorem}   \label{T3.3}
Let 
\begin{\eq}   \label{3.15}
\As (\rn) \quad \text{with} \quad  A \in \{B, F \}, \quad s<0 \quad \text{and} \quad 0<p,q \le \infty
\end{\eq} 
with $p<\infty$ for the $F$--spaces. Let
\begin{\eq}  \label{3.16}
\vr >
\begin{cases}
\sigma^{n}_p -s &\text{if $A=B$}, \\
\sigma^{n}_{p,q} -s &\text{if $A=F$}.
\end{cases}
\end{\eq}
Then any $g\in \Cc^{\vr} (\rn)$ is a pointwise multiplier in $\As (\rn)$ and 
\begin{\eq}   \label{3.17}
\| gf \, | \As (\rn) \| \le c\, \|g \, | \Cc^{\vr} (\rn) \| \cdot \| f \, | \As (\rn) \|
\end{\eq}
for some $c>0$, all $g\in \Cc^{\vr} (\rn)$ and all $f\in \As (\rn)$.
\end{theorem}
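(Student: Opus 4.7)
The plan is to transfer the multiplication by $g$ onto the quarkonial building blocks supplied by Theorem \ref{T2.10}, and then reinterpret $g\,\Phi^\beta_{j,m}$ as admissible atoms in the sense of the Scharf--type representation Proposition \ref{P2.21}. More precisely, for $f\in\As(\rn)$ I would use the expansion (with $\vk=0$)
\[
f = \sum_{\beta,j,m} \lambda^\beta_{j,m}(f)\,\Phi^\beta_{j,m}, \qquad \|\lambda(f)\,|\,a^s_{p,q}(\rn)^0\| \sim \|f\,|\,\As(\rn)\|,
\]
so that formally $gf = \sum \lambda^\beta_{j,m}(f)\,g\,\Phi^\beta_{j,m}$ and $(gf)_N$ is a finite partial sum of this series. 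The hypothesis \eqref{3.16} on $\vr$ is precisely what Proposition \ref{P2.21} demands in order to replace classical moment conditions by the Scharf oscillation condition \eqref{2.128} with exponent $\vr$.

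To produce compactly supported pieces I would recycle the splitting $\Phi^\beta_{j,m}=\sum_k a^{L,\beta,k}_{j,m}$ from \eqref{2.40}--\eqref{2.42}, in which each $a^{L,\beta,k}_{j,m}$ is localized near $2^{-j}(m+k)$, is $L_\infty$--normalized up to a factor $\langle k\rangle^{-D}\,2^{-\vk|\beta|}$ for any chosen $D,\vk>0$, and carries classical moment conditions up to order $2L-1$. Multiplication by $g$ preserves the support and the $k$-- and $\beta$--decay factors; the substantive step is to verify, once $L$ is chosen with $2L>\vr$, the Scharf estimate
\[
\Bigl|\int_{\rn} \Psi(x)\,g(x)\,a^{L,\beta,k}_{j,m}(x)\,\di x\Bigr| \le c\,2^{-j(\vr+n)}\,\langle k\rangle^{-D}\,2^{-\vk|\beta|}\,\|g\,|\,\Cc^{\vr}(\rn)\|\cdot\|\Psi\,|\,\Cc^{\vr}(\rn)\|
\]
for all $\Psi\in\Cc^\vr(\rn)$. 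The argument rewrites the left-hand side as $\int(\Psi g)\,a^{L,\beta,k}_{j,m}$, applies the classical Scharf estimate to $a^{L,\beta,k}_{j,m}$ tested against the Zygmund--regular factor $\Psi g$, and pays the price via the well--known multiplication algebra property of $\Cc^\vr(\rn)$ with $\vr>0$, namely $\|\Psi g\,|\,\Cc^\vr\|\le c'\|\Psi\,|\,\Cc^\vr\|\cdot\|g\,|\,\Cc^\vr\|$.

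Once this is at hand, Proposition \ref{P2.21} together with the $u$--triangle inequality, $u=\min(1,p,q)$, yields uniformly in $N$ the bound
\[
\|(gf)_N\,|\,\As(\rn)\|^u \le c\,\|g\,|\,\Cc^\vr\|^u \sum_{\beta,k} \langle k\rangle^{-Du}\,2^{-\vk|\beta|u}\,\|\lambda^\beta(f)\,|\,a^s_{p,q}(\rn)\|^u \le c''\,\|g\,|\,\Cc^\vr\|^u\,\|f\,|\,\As(\rn)\|^u,
\]
after choosing $D,\vk>0$ large enough to sum the geometric series; this is exactly \eqref{3.11}. The weak${}^*$--convergence \eqref{3.10} would then be obtained by pairing $(gf)_N$ with $\vp\in S(\rn)$, shifting $g$ onto $\vp$, and invoking the unconditional convergence in $S'(\rn)$ of the quarkonial expansion of $f$ tested against the admissible product $g\vp$. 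The main obstacle I expect is the verification of the displayed Scharf inequality for the perturbed atoms $g\,a^{L,\beta,k}_{j,m}$: one must absorb the limited smoothness of $g$ into the $\Cc^\vr$--algebra estimate without degrading either the sharp power $2^{-j(\vr+n)}$ or the absolute summability in $\beta$ and $k$. Apart from this, the scheme closely parallels Step 1 of the proof of Theorem \ref{T2.23}.
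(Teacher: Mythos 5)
Your proposal captures the core of the paper's own argument: expand $f$ via Theorem~\ref{T2.10}, break each $\Phi^\beta_{j,m}$ into localized atoms $a^{L,\beta,k}_{j,m}$ using \eqref{2.40}--\eqref{2.42}, verify the Scharf--type oscillation condition \eqref{2.128} for the perturbed atoms $g\,a^{L,\beta,k}_{j,m}$ by writing $\int(\Psi g)\,a^{L,\beta,k}_{j,m}$ and invoking the multiplication--algebra property of $\Cc^\vr(\rn)$, and then apply Proposition~\ref{P2.21} followed by the $u$--triangle inequality over $k$ and $\beta$. This is precisely the content of \eqref{3.19}--\eqref{3.23}, including the fact that $2L>\vr$ suffices and that the $\langle k\rangle^{-D}$ and $2^{-\vk|\beta|}$ factors drive the summations.

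The one structural difference is that you run this for arbitrary $g\in\Cc^\vr(\rn)$ in a single pass, whereas the paper first establishes \eqref{3.23} for compactly supported $g$ and then, in a separate step, passes to general $g$ via the localization principle \eqref{3.24} for the $F$--spaces (together with the Fatou property to legitimize use of the equivalence before membership is known) and real interpolation \eqref{3.26} for the $B$--spaces. Since you bound the finite partial sums $(gf)_N$ uniformly in $N$ directly from Proposition~\ref{P2.21}, applied for each fixed $(\beta,k)$ to the translated atomic system located at $2^{-j}(m+k)$ and using translation invariance of $a^s_{p,q}(\rn)$, your route in principle sidesteps the localization/interpolation machinery and treats $B$-- and $F$--spaces uniformly; this is a mild simplification of the paper's presentation rather than a departure from it. Be aware, though, that to pass from the uniform bound \eqref{3.11} to \eqref{3.13} one still needs the Fatou property exactly as the paper explains after Definition~\ref{D3.1}, and that the weak$^*$--convergence \eqref{3.10} does require a short separate justification (e.g.\ pairing $(gf)_N$ with $\vp\in S(\rn)$, noting $g\vp$ is an admissible test object for the unconditionally convergent expansion of $f$ in $S'(\rn)$) -- a point you flag but should spell out.
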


\begin{proof}
{\em Step 1.} Let $f$ be given by \eqref{3.1}. Roughly speaking  we wish to show that
\begin{\eq}   \label{3.18}
gf = \sum_{\substack{\beta \in \nat^n_0, j\in \no, \\ m \in \zn}} 2^{jn} \big(f, k^\beta_{j,m} \big) \, g\,\Phi^\beta_{j,m}
\end{\eq}
is closely related to expansions as described in Theorem \ref{T2.10} with $g \Phi^\beta_{j,m}$ as $L_\infty$--normalized building 
blocks. We rely again on Proposition \ref{P2.21} what requires to break down $g \Phi^\beta_{j,m}$ into related atoms having 
sufficiently many modified moment conditions according to \eqref{2.128}. For this purpose we assume first that $g$ has a compact 
support. Then we can argue as in Step 1 of the proof of Theorem \ref{T2.23} and
\begin{\eq}   \label{3.19}
\big(g \Phi^\beta_{j,m} \big)(x) = \sum_{k \in \zn} g(x) \, a^{L,\beta,k}_{j,m} (x)
\end{\eq}
with $a^{L,\beta,k}_{j,m}$ as in \eqref{2.40}, \eqref{2.41} makes sense. Let, in addition, $g\in C^L (\rn)$ where $C^L (\rn)$ is
normed according to \eqref{2.126} with $L= \vr \in \nat$ as in \eqref{3.16} = \eqref{2.130}. One has
\begin{\eq}  \label{3.20}
\Big| \int_{\rn} g(x) \, a^{L,\beta,k}_{j,m} (x) \, \di x \Big| \le c \, 2^{-j(L+n)} \langle k \rangle^{-D} 2^{-\vk |\beta|} \, \| g \,
| C^L (\rn) \|
\end{\eq}
by the same arguments resulting in \eqref{2.142} based on Taylor expansions of $g$ at $2^{-j} (m+k)$ with remainder terms of order
$L$. As there $D>0$ and $\vk >0$ are at our disposal.
If one uses the integral remainder term of the Taylor expansion for $g\in \Cc^{\vr} (\rn)$, $0<\vr \not= \nat$, as already done in
\cite[pp.\,205--206]{T92} in connection with assertions of type \eqref{3.17}, then one obtains
\begin{\eq}   \label{3.21}
\Big| \int_{\rn} g(x) \, a^{L,\beta,k}_{j,m} (x) \, \di x \Big| \le c \, 2^{-j(\vr +n)} \langle k \rangle^{-D} 2^{-\vk |\beta|}
\| g \, | \Cc^{\vr} (\rn) \|
\end{\eq}
where again $D>0$ and $\vk >0$ are at our disposal.
Using the well--known fact
that $\Cc^{\vr} (\rn)$ is a multiplication algebra (covered also by Theorem \ref{T3.6} below in the context of quarkonial 
arguments) one can extend \eqref{3.21} to
\begin{\eq}   \label{3.22}
\Big| \int_{\rn} \Psi (x) \, g(x) \, a^{L,\beta,k}_{j,m} (x) \, \di x \Big| \le c\, 2^{-j(\vr+n)} \langle k \rangle^{-D} 2^{-\vk 
|\beta|} \, \| g \, | \Cc^{\vr} (\rn) \| \cdot \| \Psi \, | \Cc^{\vr} (\rn) \|.
\end{\eq}
Now one can apply Proposition \ref{P2.21} to the $\vr$--atoms $g(x) \, a^{L,\beta,k}_{j,m} (x)$ with the indicated additional factors.
Recall again that $\As (\rn)$ is a $u$--Banach space with $u= \min (1,p,q)$. 
Summation over $k \in \zn$ based on \eqref{3.19} gives the $\beta$--term in \eqref{3.18}. Summation over $\beta \in \nat^n_0$ 
and application of Theorem \ref{T2.10} prove
\begin{\eq}   \label{3.23}
\| gf \, | \As (\rn) \| \le c \, \| g \, | \Cc^{\vr} (\rn) \| \cdot \| f \,|\As (\rn) \|
\end{\eq}
for compactly supported $g \in \Cc^{\vr} (\rn)$.
\cm
{\em Step 2.} Let now $g\in \Cc^{\vr} (\rn)$ with $\vr$ as in \eqref{3.16} be an arbitrary function. We argue now similarly as in Step
2 of the proof of Theorem \ref{T2.18}.
Let $\psi_M (x) = \psi (x-M)$,
$x\in \rn$, $M\in \zn$, be a suitable resolution of unity based on a compactly supported $C^\infty$--function $\psi$ as in 
\eqref{2.111}. By  \cite[Section 2.4.2, pp.\,43--44]{T20} (and the references given there) one has
\begin{\eq}   \label{3.24}
\| gf \, | \Fs (\rn) \| \sim \Big( \sum_{M\in \zn} \| \psi_M g \cdot \psi_M f \, | \Fs (\rn) \|^p \Big)^{1/p}
\end{\eq}
for all $s\in \real$ and $0<p,q \le \infty$. This covers in particular the spaces $\Fs (\rn)$ with $s<0$, $0<p<\infty$, $0<q\le \infty
$ and $F^s_{\infty, \infty} (\rn) = B^s_{\infty, \infty} (\rn)$ with $s<0$. 
Now one can apply Step 1 to $\psi_M g \in \Cc^{\vr} (\rn)$. Together 
with  a pointwise multiplier assertion $\psi_M g \to g$ one obtains
\begin{\eq}   \label{3.25}
\begin{aligned}
\| gf \, | \Fs (\rn) \| &\le c \, \|g \, | \Cc^{\vr} (\rn) \| \cdot \Big(\sum_{M\in \zn} \| \psi_M f \, | \Fs (\rn) \|^p \Big)^{1/p} \\
&\le c' \| g \, | \Cc^{\vr} (\rn) \| \cdot \| f \, | \Fs (\rn)\|.
\end{aligned}
\end{\eq}
But we have to add a technical comment. Formally \eqref{3.24} requires that one already knows  that $gf \in \Fs (\rn)$. This is justified
so far only for compactly supported $g\in \Cc^{\vr} (\rn)$ for which we have the uniform estimate \eqref{3.25}. The extension to
arbitrary $g\in \Cc^{\vr} (\rn)$ is now a matter of the Fatou property, \cite[Section 1.3.4, p.\,18]{T20}. The $B$--spaces can be
incorporated by real interpolation,
\begin{\eq}   \label{3.26}
\Bs (\rn) = \big( F^{s_0}_{p,p} (\rn), F^{s_1}_{p,p} (\rn) \big)_{\theta,q}
\end{\eq}
with $0<q\le \infty$, $s_0 <s<s_1 <0$ suitably chosen and $s= (1- \theta) s_0 + \theta s_1$.
\end{proof}

\begin{remark}   \label{R3.4}
We relied again on Proposition \ref{P2.21}. This requires to break down the molecules $\Phi^\beta_{j,m}$ into atoms resulting in 
\eqref{3.19}. 
A molecular version of Proposition \ref{P2.21} would simplify the above arguments. We discussed this point already in Remark 
\ref{R2.22}.  Furthermore instead
of \eqref{3.1} and \eqref{3.19} one could rely on corresponding atomic representations (with sufficiently many moment conditions) or
related wavelet expansions. This again would simplify the above arguments somewhat. Furthermore  it applies to all spaces $\As (\rn)
$ with $s\in \real$ and $0<p,q \le \infty$ (maybe $p<\infty$ for $F$--spaces). This has already been done in \cite{Scha13}. But the
above seemingly more complicated approach may serve as a prototype for more general operators $T$ in \eqref{3.3} than pointwise multiplication.
The proof of Theorem \ref{T2.23} may be considered as an example.
\end{remark}

\begin{remark}   \label{R3.5}
We have to add a second comment. The above theorem is already covered by \cite[Section 2.8.2, Theorem, Corollary, pp.\,140--144]{T83} based on \cite{Tr78}, where we
proved corresponding assertions for all $s\in \real$ using the heavy machinery of paramultiplication with the restriction $\vr > \max
(s, \sigma^{n}_p -s)$ for $B$--spaces and $\vr > \max (s, \sigma^{n}_{p,q} -s)$ for  $F$--spaces as in \eqref{3.16} (restricted
there to $s<0$)  and in
\cite{Scha13}. It had been observed in
\cite{Fra86}, relying again on paramultiplication, that the $q$--dependence in case of the $F$--spaces can be removed: One has 
\eqref{3.17} for all spaces $\As (\rn)$ with $s\in \real$ and $0<p,q \le \infty$ ($p<\infty$ for the $F$--spaces) if $\vr > \max (s,
\sigma^{n}_p -s)$. A shorter new proof of this assertion may be found in \cite[Section 4.2.2, Theorem, Corollary, pp.\,202--206]{T92}
based on local means. The main aim of \cite{Scha13} in this context
is not to prove new results, but to present the above--described method based on
Proposition \ref{P2.21}. Finally we refer the reader to \cite[Theorem 2.30, p.\,41]{T20}
where we incorporated the spaces $F^s_{\infty,q} (\rn)$, $0<q \le \infty$. In other words, one has \eqref{3.17} for all spaces $\As
(\rn)$, $A \in \{B,F \}$, $s\in \real$ and $0<p,q \le \infty$ if $\vr > \max (s, \sigma^{n}_p -s)$. In \cite{T20}
 one finds also further
discussions. In particular, Theorem \ref{T3.3} is not new and better results are known. But there is good reason to believe that the
above approach can be applied to more sophisticated operators $T$ in \eqref{3.3} using the distinguished properties of 
$\Phi^\beta_{j,m}$ as described in \eqref{2.9}--\eqref{2.11}.
\end{remark}

\subsection{Multiplication algebras}    \label{S3.2}
We ask for further applications of quarkonial representations. So far we used Theorem \ref{T2.10} in the preceding 
Section \ref{S3.1} to have a new
look at pointwise multipliers. Now we wish to use Theorem \ref{T2.4} to deal with multiplication algebras. Nothing new can be 
expected because one has already final answers under which conditions  a space $\As (\rn)$ is a multiplication algebra. Definitions,
related assertions and references may be found in \cite[Section 2.4.3, pp.\,44--46]{T20}. In particular, a quasi--Banach space $A(\rn)$
on $\rn$ with
\begin{\eq}  \label{3.48}
S(\rn) \hra A(\rn) \hra S'(\rn), \qquad A(\rn) \subset L^{\loc}_1 (\rn)
\end{\eq}
is said to be a multiplication algebra if $f_1 f_2 \in A(\rn)$ whenever $f_1 \in A(\rn)$, $f_2 \in A(\rn)$ and if there is a constant
$c >0$ such that
\begin{\eq}   \label{3.49}
\| f_1 f_2 \, | A(\rn) \| \le c\,\| f_1 \, | A(\rn) \| \cdot \| f_2 \, | A(\rn) \|
\end{\eq}
for $f_1 \in A(\rn)$, $f_2 \in A(\rn)$. Note that we assume that $A(\rn)$ consists entirely of regular distributions. Then the product
of two elements makes sense at least pointwise almost everywhere.
We are now interested to indicate how 
Theorem \ref{T2.4} can be used in this context. It  may well be possible that one can recover the already known final
assertions in this way. But this is not our aim. We wish to describe the basic ideas and choose for this purpose the simplest case. 
These are the spaces
\begin{\eq}   \label{3.27}
\Bs (\rn) \qquad \text{with $0<p,q \le \infty$ and $s>n/p$}.
\end{\eq}

\begin{theorem}   \label{T3.6}
The spaces $\Bs (\rn)$ in \eqref{3.27} are multiplication algebras.
\end{theorem}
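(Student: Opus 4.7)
The plan is to exploit Theorem~\ref{T2.4} by expanding both factors $f_1, f_2 \in \Bs (\rn)$ quarkonially and then reorganizing the resulting double sum as an atomic decomposition that Theorem~\ref{T1.7} can convert back into a $\Bs (\rn)$-norm estimate. Since $s > n/p \ge \sigma^n_p$, Theorem~\ref{T2.4} is available; the same hypothesis $s > n/p$ delivers the classical embedding $\Bs (\rn) \hra L_\infty (\rn)$, which will play the decisive role below.

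Fix $\vk > 0$ with $\vk < \ve$ and, using the $\ell_r$-variant of the sequence norm mentioned in Remark~\ref{R2.5}, write
\[
f_i = \sum_{\beta, j, m} \lambda^{\beta,(i)}_{j,m}\, k^\beta_{j,m}, \qquad \|\lambda^{(i)} \,|\, b^s_{p,q}(\rn)^{\vk}\| \sim \|f_i \,|\, \Bs (\rn)\|, \quad i = 1, 2.
\]
Multiplying formally,
\[
f_1 f_2 = \sum_{(\beta,j,m),(\beta',j',m')} \lambda^{\beta,(1)}_{j,m}\, \lambda^{\beta',(2)}_{j',m'}\, k^\beta_{j,m}\, k^{\beta'}_{j',m'},
\]
and Leibniz' rule combined with Proposition~\ref{P2.1}(i) shows that, for any prescribed $K \in \nat$, the rescaled products $2^{\vk(|\beta|+|\beta'|)}\, k^\beta_{j,m}\, k^{\beta'}_{j',m'}$ are uniformly $(K, 0, C, d)$-atoms at scale $J = \max(j, j')$, located in a bounded neighborhood of the necessarily overlapping cube supports; since $s > \sigma^n_p$, no moment conditions are required in Theorem~\ref{T1.7}.

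Next I would regroup the sum into the three classical paramultiplication regimes $j < j'$, $j > j'$, and $j = j'$. In the low--high regime $j < j' = J$, for each fixed cube around $2^{-J}M$ the partial sum $F_{J,M}(x) := \sum_{j < J, \beta, m} \lambda^{\beta,(1)}_{j,m}\, k^\beta_{j,m}(x)$ restricted to that cube is dominated pointwise by $\|f_1 \,|\, L_\infty (\rn)\| \lesssim \|f_1 \,|\, \Bs (\rn)\|$, being (up to rapidly decaying tails in $\beta$) a truncation of the convergent quarkonial expansion of $f_1$; hence this block contributes an atomic series whose coefficients are bounded by $\|f_1 \,|\, \Bs(\rn)\| \cdot \sum_{\beta'} 2^{-\vk |\beta'|}\, |\lambda^{\beta',(2)}_{J, M}|$, which the $\ell_r$-variant of the sequence norm converts into a $b^s_{p,q}(\rn)$-bound of order $\|f_1 \,|\, \Bs(\rn)\| \cdot \|f_2 \,|\, \Bs(\rn)\|$. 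The high--low regime is symmetric. For the diagonal $j = j' = J$ the support overlap forces $|m - m'|$ bounded by a constant depending only on the parameter $J$ in \eqref{2.1}; summing H\"{o}lder-wise over the finitely many nearby $(m, m')$ pairs and applying the $u$-triangle inequality ($u = \min(1, p, q)$) in $\beta, \beta'$ against the decay from Proposition~\ref{P2.1}(i) yields the same product bound.

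Combining the three regimes and invoking Theorem~\ref{T1.7}(i) produces $f_1 f_2 \in \Bs (\rn)$ together with the algebra estimate \eqref{3.49}. The main obstacle is the bookkeeping: the fourfold sum over $(\beta, j, m)$ and $(\beta', j', m')$ must be repackaged into a clean atomic sum indexed by $(J, M)$, with the $\beta, \beta'$-decay shepherded through the quasi-triangle inequalities and the $\ell_r$-variant of the quark norm. The diagonal regime is the most delicate, and it is precisely there --- through the $L_\infty$-control of one factor at a fixed frequency --- that the strict hypothesis $s > n/p$, stronger than the mere $s > \sigma^n_p$ needed for the existence of the quarkonial expansion, becomes indispensable.
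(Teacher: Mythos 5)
Your proposal is correct in outline but follows a genuinely different route from the paper's. The paper works on the dual side: it invokes the quarkonial frame characterization \eqref{2.33} to reduce $\| f_1 f_2 \, | \Bs (\rn) \|$ to the coefficients $2^{jn}\big(f_1 f_2, \Phi^\beta_{j,m}\big)$, expands both factors in quarks, and estimates the resulting triple inner products $\big( k^{\beta^1}_{j^1,m^1} k^{\beta^2}_{j^2,m^2}, \Phi^\beta_{j,m} \big)$; the crucial gain $2^{-(j-j^1)L}$ in \eqref{3.35} comes from the moment conditions and rapid decay of the dual system $\Phi^\beta_{j,m}$, used exactly as in \eqref{3.19}--\eqref{3.20}. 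You instead re-decompose the product $\sum \lambda^{(1)} k^\beta_{j,m} \cdot \sum \lambda^{(2)} k^{\beta'}_{j',m'}$ directly as a fresh atomic series at scale $\max(j,j')$ and feed it back into Theorem~\ref{T1.7}; this is a primal atomic re-packaging rather than a frame-coefficient estimate, more in the spirit of classical paraproduct splitting. Both methods rest on the same two pillars --- the embedding $\Bs \hra L_\infty$ for $s > n/p$ and the exponential $\beta$-decay of Proposition~\ref{P2.1}(i) --- but the technical workhorse is different: the paper needs the vanishing moments of $\Phi^\beta_{j,m}$, while you need uniform $C^K$-control of the low-frequency truncations $F_{J,M}$ at scale $J$.

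On that last point there is a small but real imprecision: you assert that $F_{J,M}(x) = \sum_{j<J,\beta,m} \lambda^{\beta,(1)}_{j,m} k^\beta_{j,m}(x)$ is ``dominated pointwise by $\|f_1 \,|\, L_\infty(\rn)\|$, being a truncation of the convergent quarkonial expansion.'' A truncation of a series that converges in $L_\infty$ is not automatically bounded by the $L_\infty$-norm of the limit, so this step is not immediate as stated. What does work --- and what you actually need for the $(K,0,C,d)$-atom normalization at scale $J$ --- is the bound $|D^\gamma F_{J,M}(x)| \lesssim 2^{J|\gamma|}\|f_1\,|\,\Bs(\rn)\|$ for $|\gamma|\le K$, obtained directly from the coefficient estimate $|\lambda^{\beta,(1)}_{j,m}|\lesssim 2^{-\vk|\beta|}2^{-j(s-n/p)}\|f_1\,|\,\Bs(\rn)\|$, Proposition~\ref{P2.1}(i), and a geometric sum over $j<J$ that converges precisely because $s>n/p$. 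Replacing $\|f_1\,|\,L_\infty(\rn)\|$ by $\|f_1\,|\,\Bs(\rn)\|$ here costs nothing in the final algebra inequality and closes the gap, so your argument is salvageable; but as written this intermediate claim is a hole you would need to patch.
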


\begin{proof}
First we remark that
\begin{\eq}   \label{3.28}
\Bs (\rn) \hra L_\infty (\rn) \qquad \text{where $0<p,q \le \infty$ and $s>n/p$}.
\end{\eq} 
Let $f_1, f_2 \in \Bs (\rn)$ be represented according to Theorem \ref{T2.4} as
\begin{\eq}   \label{3.29}
f_l  = \sum_{\substack{\beta^l \in \nat^n_0, j^l \in \no, \\ m^l \in \zn}} 2^{j^l n} \big( f_l, \Phi^{\beta^l}_{j^l, m^l} \big)
k^{\beta^l}_{j^l, m^l}
\end{\eq}
with 
\begin{\eq}   \label{3.30}
\| f_l\, | \Bs (\rn) \| \sim \| \lambda (f_l) \, | b^s_{p,q} (\rn)^0 \|
\end{\eq}
in the notation used there.
We ask for a corresponding representation of $f_1 f_2$,
\begin{\eq}   \label{3.31}
f_1 f_2  = \sum_{\substack{\beta \in \nat^n_0, j \in \no, \\ m \in \zn}} 2^{j n} \big( f_1 f_2, \Phi^{\beta}_{j, m} \big)
k^{\beta}_{j, m}
\end{\eq}
such that
\begin{\eq}   \label{3.32}
\begin{aligned}
& \| f_1 f_2 \, | \Bs (\rn) \| \\
&\sim \sup_{\beta \in \nat^n_0} \bigg( \sum^\infty_{j=0} 2^{j(s - \frac{n}{p})q} \Big( \sum_{m\in \zn} \big| 2^{jn} \big( f_1 f_2,
\Phi^\beta_{j,m} \big)\big|^p \Big)^{q/p} \bigg)^{1/q}
\end{aligned}
\end{\eq}
is finite. By \eqref{3.29} one has
\begin{\eq}   \label{3.33}
\begin{aligned}
& 2^{jn} \big( f_1 f_2, \Phi^\beta_{j,m} \big) \\
&= 2^{jn} \sum_{\substack{\beta^1, \beta^2 \in \nat^n_0; j^1, j^2 \in \no;\\ m^1, m^2 \in \zn}} 2^{(j^1 + j^2)n}
\big(f_1, \Phi^{\beta^1}_{j^1, m^1} \big) \big( f_2, \Phi^{\beta^2}_{j^2, m^2} \big) \cdot \big( k^{\beta^1}_{j^1,m^1} 
k^{\beta^2}_{j^2,m^2}, \Phi^\beta_{j,m} \big).
\end{aligned}
\end{\eq}
By the exponential decay similarly as in \eqref{2.34} and
 \eqref{2.40}--\eqref{2.42} it is sufficient to deal with $\beta = \beta^1 = \beta^2 =0$. Let
$\Phi_{j,m}^0 = \Phi_{j,m}$ and $k^0_{j,m} = k_{j,m}$. There are two cases: $j\ge j^1$ with $j^1 \ge j^2$ and $j<j^1$ again
with $j^1 \ge j^2$. By  \eqref{2.11} and \eqref{3.28} one has always
\begin{\eq}   \label{3.34}
2^{j^2 n} \big| \big( f_2, \Phi_{j^2,m^2} \big) \big| \le c \, \|f_2 \, | L_\infty (\rn) \| \le c' \, \|f _2 \, | \Bs (\rn) \|.
\end{\eq}
Let $m\in \zn$ and $j\in \no$. Let $j^1 \le j$. We apply \eqref{3.19}, \eqref{3.20} with $g= k_{j^1,m^1} k_{j^2,m^2}$ where $D>0$ and
$L\in \nat$ are at our disposal. For fixed $j, j^1, m$ there are $\sim 1$ main terms in \eqref{3.33}
with $2^{-j} m \sim 2^{-j^1} m^1 \sim 2^{-j^2} m^2$ which can be estimated as
\begin{\eq}   \label{3.35}
2^{jn}  \Big| \int_{\rn} k_{j^1,m^1} (x) \, k_{j^2, m^2} (x) \, \Phi_{j,m} (x) \, \di x \Big| \le c \,2^{-(j-j^1)L}.
\end{\eq}
Any couple $(j^1, m^1)$ contributes to $\sim 2^{(j-j^1)n}$ terms $(j,m)$. Summation over $m\in \zn$
for fixed $j$ and $j^1$ with $j^1 \le j$ results in
\begin{\eq} \label{3.36}
\begin{aligned}
& 2^{jnp}
\sum_{2^{j-j^1} m^1 \sim m\in \zn} 2^{j^1np} \big| \big( f_1, \Phi_{j^1,m^1} \big) \big|^p \, \big|  \big( k_{j^1,m^1} k_{j^2,m^2},
\Phi_{j,m} \big) \big|^p \\
&\le c \, 2^{-(j-j^1)Lp} \, \sum_{m^1\in \zn} 2^{j^1 np} \big| \big( f_1, \Phi_{j^1, m^1} \big) \big|^p
\end{aligned}
\end{\eq}
where again $L\in \nat$ is at our disposal (compensating also the additional factor $2^{(j-j^1)n}$). Let $j<j^1$ (and again $j^2 \le j^1$). For fixed $m\in \zn$ there are now $\sim 2^{(j^1 - j)n}$ terms with
 $2^{-j}m \sim 2^{-j^1} m^1 \sim 2^{-j^2} m^2$ in \eqref{3.33}. For the related terms one has now
\begin{\eq}   \label{3.37}
\begin{aligned}
\sum_{m^1: 2^{-j^1} m^1 \sim 2^{-j} m} \Big| \int_{\rn} k_{j^1,m^1} (x) \, k_{j^2,m^2} \, \Phi_{j,m} (x) \, \di x \Big|
& \le c\, 2^{(j^1 -j)n} 2^{-j^1 n} \\
& = c \, 2^{-jn}
\end{aligned}
\end{\eq}
and
\begin{\eq}   \label{3.38}
\begin{aligned}
&\sum_{m^1: 2^{-j^1} m^1 \sim 2^{-j}m} 2^{j^1 n} \big| \big( f_1, \Phi_{j^1,m^1} \big) (k_{j^1,m^1} k_{j^2, m^2}, \Phi_{j,m} \big) \big|
\\
&\le c \, 2^{-jn} \sup_{m^1: 2^{-j^1} m^1 \sim 2^{-j} m} 2^{j^1 n} \big| \big(f_1, \Phi_{j^1, m^1} \big) \big| \\
&\le c \, 2^{-jn} \Big(\sum_{m^1: 2^{-j^1} m^1 \sim 2^{-j} m} \big| 2^{j^1 n} \big(f_1, \Phi_{j^1,m^1} \big) \big|^p \Big)^{1/p}.
\end{aligned}
\end{\eq}
For fixed $j$ and  $j^1$ with $j < j^1$  and summation over $m\in \zn$ one obtains now
\begin{\eq} \label{3.39}
\begin{aligned}
&\sum_{m \in \zn} \bigg( 
\sum_{2^{j-j^1} m^1 \sim m} 2^{j^1 n} \big| \big( f_1, \Phi_{j^1,m^1} \big)  \big( k_{j^1,m^1} k_{j^2,m^2},
\Phi_{j,m} \big) \big| \bigg)^p \\
& \le c \, 2^{-jnp} \sum_{m^1 \in \zn} 2^{j^1 np} \big| \big( f_1, \Phi_{j^1,m^1} \big) \big|^p.
\end{aligned}
\end{\eq}
Let $j\in \no$ be fixed. Inserting \eqref{3.36} for $j^1 \le j$ in \eqref{3.32}, \eqref{3.33} (where the power $p$ is compensated by
a suitably chosen $L$) and \eqref{3.39} for $j^1 >j$ in 
\eqref{3.32}, \eqref{3.33} one obtains
\begin{\eq}   \label{3.40}
\begin{aligned}
&2^{j (s- \frac{n}{p})p} \, \sum_{m\in \zn} \big| 2^{jn} (f_1 f_2, \Phi_{j,m}) \big|^p \\
&\le c\, \|f_2 \, | L_\infty (\rn) \|^p \, \Big( \sum_{j^1 \le j} 2^{-(j-j^1) (L-s +\frac{n}{p})p} + \sum_{j^1 >j} 2^{-(j^1 -j)(s - 
\frac{n}{p})p} \Big) \\
&\quad \cdot \sum_{m^1 \in \zn} 2^{j^1 (s- \frac{n}{p})p} \big| 2^{j^1 n} (f_1, \Phi_{j^1, m^1} ) \big|^p.
\end{aligned}
\end{\eq} 
We choose $s- \frac{n}{p} <L \in \nat$ and use $s > \frac{n}{p}$. We insert \eqref{3.40} in \eqref{3.32}. This covers the main term
$\beta = \beta_1 = \beta_2 =0$. As far as the remaining terms are concerned we use
\begin{\eq}  \label{3.41}
\sum_{j\in \nat} \Big( \sum_{l\in \nat} 2^{-\delta |j-l|} d_l \Big)^{\vk} \le c \, \sum_{j\in \nat} d_l^{\vk}, \qquad \delta >0, \quad
\vk >0,
\end{\eq}
$d_l \ge 0$. Then it follows from \eqref{3.32}, its counterpart with $f_1$ in place of $f_1 f_2$ and \eqref{3.34} that
\begin{\eq}   \label{3.42}
\begin{aligned}
&\| f_1 f_2 \, | \Bs (\rn) \| \\ &\le c \, \| f_1 \, | \Bs (\rn) \| \cdot \| f_2 \, | L_\infty (\rn) \|
 +c \, \| f_2 \, | \Bs (\rn) \| \cdot \| f_1 \, | L_\infty (\rn) \| \\
 &\le c' \, \|f_1 \, | \Bs (\rn) \| \cdot \| f_2 \, | \Bs (\rn) \|.
\end{aligned}
\end{\eq}
This proves the theorem.
\end{proof}

\begin{remark}   \label{R3.7}
Recall that $\As (\rn) \subset L^{\loc}_1 (\rn)$ is a multiplication algebra if, and only if,
\begin{\eq}   \label{3.43}
\begin{cases}
\text{either $s>n/p$} &\text{where $0<p,q \le \infty$}, \\
\text{or $s = n/p$}   &\text{where $0<p<\infty$, $0<q \le 1$},
\end{cases}
\end{\eq}
for the $B$--spaces and
\begin{\eq}   \label{3.44}
\begin{cases}
\text{either $s>n/p$} &\text{where $0<p,q \le \infty$}, \\
\text{or $s = n/p$}   &\text{where $0<p \le 1$, $0<q \le \infty$},
\end{cases}
\end{\eq}
for the $F$--spaces. One may consult \cite[Section 2.4.3, pp.\,44-46]{T20} and the  references to the long and substantial history
given there. In other words, the above theorem is a new proof of the if--part of \eqref{3.43} with exception of the limiting case with
$s= n/p$. It might well be possible  to incorporate this limiting case what might require some additional efforts. This may apply also
to the spaces $\Fs (\rn)$ with \eqref{3.44} based on the Fefferman--Stein vector--valued maximal inequality
\begin{\eq}   \label{3.45}
\Big\| \Big( \sum^\infty_{k=0} \big( M |g_k|^w \big) (\cdot)^{q/w} \Big)^{1/q} | L_p (\rn) \Big\| \le c \, \Big\| \Big( \sum^\infty_{k=0} |g_k  (\cdot) |^q \Big)^{1/q} | L_p (\rn) \Big\|
\end{\eq}
where
\begin{\eq}   \label{3.46}
0<p<\infty, \quad 0<q \le \infty, \quad 0<w< \min (p,q)
\end{\eq}
and
\begin{\eq}   \label{3.47}
(Mg)(x) = \sup_{x \in Q} |Q|^{-1} \int_Q |g(y)| \, \di  y, \qquad x \in \rn,
\end{\eq}
where the supremum is taken over all cubes $Q$ with $x\in Q$. But this will not be done here. It surely requires some additional efforts. We are interested here mainly in the method which may also 
be of some use for others (near--by) problems. Basically one reduces the desired assertions to mapping properties of related sequence
spaces with \eqref{3.35} as the crucial observation. Similar ideas had already been used in \cite[Section 1.4.2, pp.\,77--83]{T19}
where we dealt with multiplication algebras for the spaces $S^r_p B (\rn)$ with dominating mixed smoothness. Instead of the quarkonial
systems in \eqref{2.13} and \eqref{2.14} we relied there on products of one--dimensional wavelet system of type \eqref{1.43}. This 
method can also be used to study multiplication algebras for the Morrey smoothness spaces $\Lambda^{\vr} \As (\rn)$ and $\Lambda_{\vr}
\As (\rn)$. The related (almost) final results may be found in \cite[Theorem 5.13]{HaT22}. The underlying proofs of \cite[Theorem 2.43,
p.\,91]{T13} and \cite[Theorem 3.60, p.\,95]{T14} are based on the same type of arguments as above with the wavelets $\psi^{j^1}_{G^1,
m^1}$, $\psi^{j^2}_{G^2, m^2}$ and $\psi^j_{G,m}$ according to \eqref{1.43} in place of $k^{\beta_1}_{j^1, m^1}$, $k^{\beta_2}_{j^2,
m^2}$ and $\Phi^\beta_{j,m}$ in \eqref{3.33} and related estimates of type \eqref{3.35}. The original proofs ensuring that 
\eqref{3.43}, \eqref{3.44} are necessary and sufficient to ensure that $\As (\rn)$ is a multiplication algebra are based on the heavy
machinery of paramultiplication. The use of building blocks  (wavelets or quarks) might be simpler and more transparent. But it
requires again a lot of efforts and complicated estimates. Multiplication algebras seem to be in any case a tricky task.
\end{remark}

\subsection{Positivity}    \label{S3.3}
As indicated in the Abstract and in the Introduction, Section \ref{S1.1}, the recent theory of function spaces is governed by building
blocks, preferably atoms and wavelets, complemented now by quarks. They have their advantages and disadvantages, as well as some
remarkable interrelations. There are numerous striking applications of atoms and wavelets.  Quite recently we dealt in \cite{Tri23},
based on \cite{Tri22}, with mapping properties of so--called Fourier operators $F_\tau \in \Phi^\sigma_{1,\delta} (\rn)$, based on the
decomposition
\begin{\eq}   \label{3.50}
F_\tau = T_\tau \circ F, \qquad T_\tau \in \Psi^\sigma_{1,\delta} (\rn),
\end{\eq}
where $F$ is the Fourier transform and $\Psi^\sigma_{1,\delta} (\rn)$ are the H\"{o}rmander classes of pseudodifferential operators
with $\sigma \in \real$ and $0 \le \delta \le 1$. Mapping properties of $T_\tau \in \Psi^0_{1,\delta} (\rn)$, $0\le \delta \le 1$, in
the spaces
\begin{\eq}   \label{3.51}
\Bs (\rn), \ s> \sigma^n_p \qquad \text{and} \qquad \Fs (\rn), \ s > \sigma^n_{p,q}
\end{\eq}
with $\sigma^n_p$ and $\sigma^n_{p,q}$ as in \eqref{3.14} (or \eqref{3.60} below) are reduced in \cite{Tri23} to the wavelet
counterpart of \eqref{3.3}, this means
\begin{\eq}   \label{3.52}
T_\tau f =  \sum_{\substack{j\in \no, G\in G^j, \\ m\in \zn}}
2^{jn} \big( f, \psi^j_{G,m} \big) T_\tau \, \psi^j_{G,m}
\end{\eq}
based on Theorem \ref{T1.12}. The related proofs in \cite{Tri23} rely on the crucial property
\begin{\eq}   \label{3.53}
\big| D^\alpha_\xi \big(\psi^0_{G,0} \big)^\wedge (\xi) \big| \le c \, \frac{|\xi|^v}{(1 + |\xi|)^w}, \qquad \xi \in \rn, \quad
|\alpha| \le L, \quad G \in G^*,
\end{\eq}
where $L\in \no$, $v\in \nat$ and $w\in \nat$ are (independently) at disposal. The attempt to use Theorem \ref{T2.4} instead of
Theorem \ref{T1.12} for the above mapping properties requires a substitute of \eqref{3.53} with, say, $k^0_{0,0}$ in place of
$\psi^0_{G,0}$ (in the context of the reasoning in \cite{Tri23}). But this is not available and also not true. But the missing
quarkonial counterpart of \eqref{3.53} is just the main advantage of what follows. In this Section \ref{S3.3} we deal with a peculiar
property of function spaces, called positivity, demonstrating the advantages of quarks, compared with other building blocks in this
context. 

A distribution $f\in S'(\rn)$ is said to be real if $f(\vp) \in \real$ for any real $\vp \in S(\rn)$. A distribution $f\in S'(\rn)$
is said to be positive, $f \ge 0$, if $f(\vp) \ge 0$ for any $\vp \in S(\rn)$ such that
$\vp \ge 0$ (which means $\vp (x) \ge 0$ for all $x\in \rn$). For any
$\vp \in S(\rn)$ there are  positive numbers $c_j$ with $c_{j+1} \ge c_j +1$, $j\in \nat$, such that
\begin{\eq}  \label{3.54}
|\vp (x) | \le |x|^{-j} \qquad \text{if} \quad |x| >c_j.
\end{\eq}
By elementary reasoning one can construct a function $\vp_1 \in S(\rn)$ such that
\begin{\eq}   \label{3.55}
\vp_1 (x) \ge |\vp (x)|, \qquad \vp_1 (x) \sim |x|^{-j} \quad \text{if} \quad c_j \le |x| \le c_{j+1},
\end{\eq}
where the equivalence constants $\sim$ are independent of $j\in \nat$. Let, furthermore $\vp_1 (x) \ge |\vp (x)|$ if $|x| \le 
c_1$.  If, in addition, $\vp$ is real, then \eqref{3.55} can be complemented by
\begin{\eq}   \label{3.56}
\vp (x) = \vp_1 (x) - \vp_2 (x), \qquad \vp_1 \ge 0, \quad \vp_2 = \vp_1 - \vp \ge 0,
\end{\eq}
$\vp_1 \in S(\rn)$, $\vp_2 \in S(\rn)$. If $f\in S'(\rn)$ is positive and if $\vp$ is real then
\begin{\eq}   \label{3.57}
f(\vp) = f(\vp_1) - f(\vp_2)
\end{\eq}
shows that $f$ is real (as it should be).

\begin{definition}   \label{D3.8}
The space $\As(\rn)$ with $A \in \{B,F \}$, $s\in \real$ and $0<p,q \le \infty$ is said to have the positivity property if any real
$f\in \As (\rn)$ can be decomposed as
\begin{\eq}  \label{3.58}
f= f^1 - f^2 \quad \text{with} \quad f^l \ge 0, \quad f^l \in \As (\rn) \quad \text{for} \quad l= 1,2,
\end{\eq}
and
\begin{\eq}   \label{3.59}
\| f \, | \As (\rn) \| \sim \|f^1 \, | \As (\rn) \| + \| f^2 \, | \As (\rn) \|
\end{\eq}
with equivalence constants which are independent of $f$.
\end{definition}

\begin{remark}   \label{R3.9}
We dealt several times with the positivity property of the spaces $\As (\rn)$. The above version is a modification of \cite[Definition
3.4, p.\,88]{T20} which, in turn, is based on \cite[Section 3.3.2, pp.\,190--192]{T06} and \cite{Tri03}. We do not repeat the 
discussions and  results obtained there. We concentrate on a few points which might be considered as tiny improvements (or 
complements) of what is already known. 
\end{remark}

Let again
\begin{\eq}   \label{3.60}
\sigma^n_p = n \Big( \max \big( \frac{1}{p}, 1 \big) -1 \Big), \qquad \sigma^n_{p,q} = n \Big( \max \big( \frac{1}{p}, \frac{1}{q}, 1
\big) - 1 \Big),
\end{\eq}
where $0<p,q \le \infty$ as in \eqref{1.30} and \eqref{3.14}.

\begin{proposition}  \label{P3.10}
Let $0<p,q \le \infty$. Then the spaces
\begin{\eq}   \label{3.61}
\Bs (\rn) \qquad \text{with} \quad s > \sigma^{n}_p
\end{\eq}
and
\begin{\eq}  \label{3.62}
\Fs (\rn) \qquad \text{with} \quad s > \sigma^{n}_{p,q}
\end{\eq}
have the positivity property.
\end{proposition}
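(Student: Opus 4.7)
The plan is to exploit the quarkonial representation provided by Theorem \ref{T2.4}, leveraging the crucial fact that the quarks $k^\beta_{j,m}$ are \emph{non-negative} functions, a feature built into the construction by the support condition $y_j>0$ in \eqref{2.1} and the explicit form \eqref{2.3}. Given a real $f \in \As (\rn)$ in the range covered by \eqref{3.61}--\eqref{3.62}, I first expand it as
\begin{equation*}
f = \sum_{\beta \in \nat^n_0, j\in \no, m \in \zn} \lambda^\beta_{j,m}(f) \, k^\beta_{j,m}, \qquad \lambda^\beta_{j,m}(f) = 2^{jn}\bigl(f, \Phi^\beta_{j,m}\bigr),
\end{equation*}
with the equivalence $\|f \,|\, \As (\rn)\| \sim \|\lambda(f) \,|\, a^s_{p,q}(\rn)^0\|$ from \eqref{2.33} (taking $\vk=0$). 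Since $\Phi^\beta_{j,m}$ may be chosen real by \eqref{2.12} and $f$ is real, all coefficients $\lambda^\beta_{j,m}(f)$ are real numbers.

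Next I split each coefficient by its sign: set $\lambda^{\beta,\pm}_{j,m}=\max(\pm\lambda^\beta_{j,m}(f),0)$, so that $\lambda^\beta_{j,m}(f)=\lambda^{\beta,+}_{j,m}-\lambda^{\beta,-}_{j,m}$ and $|\lambda^{\beta,\pm}_{j,m}| \le |\lambda^\beta_{j,m}(f)|$. Both new sequences belong to $a^s_{p,q}(\rn)^0$ with norms dominated by that of $\lambda(f)$, so by Theorem \ref{T2.4} the series
\begin{equation*}
f^l = \sum_{\beta, j, m} \lambda^{\beta,(-1)^{l+1}}_{j,m} \, k^\beta_{j,m}, \qquad l = 1,2,
\end{equation*}
converge unconditionally in $S'(\rn)$ to elements $f^l \in \As (\rn)$, and $f = f^1 - f^2$. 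Because every partial sum is a non-negative locally integrable function (non-negative coefficients times non-negative quarks), positivity passes to the limit: for any $\vp \in S(\rn)$ with $\vp \ge 0$ and any enumeration, the partial sums evaluated at $\vp$ are non-negative and converge to $f^l(\vp)$, hence $f^l \ge 0$ in $S'(\rn)$.

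The quasi-norm equivalence \eqref{3.59} then follows directly from Theorem \ref{T2.4}: on the one hand, for $l=1,2$,
\begin{equation*}
\|f^l \,|\, \As(\rn)\| \le c \, \|\lambda^{\pm} \,|\, a^s_{p,q}(\rn)^0\| \le c \, \|\lambda(f) \,|\, a^s_{p,q}(\rn)^0\| \sim \|f \,|\, \As(\rn)\|,
\end{equation*}
giving $\|f^1\| + \|f^2\| \le c\,\|f\|$; on the other hand, the (quasi-)triangle inequality in $\As(\rn)$ applied to $f=f^1-f^2$ yields the reverse bound. There is no real obstacle here: the point of the argument is precisely that quarks, unlike Fourier-analytical building blocks (which must oscillate because of their compact spectral support), atoms with moment conditions, or oscillating mother wavelets, can be taken non-negative, so splitting the coefficients by sign automatically produces a positive-distribution decomposition. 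The only mild subtlety is in organising the unconditional $S'$-convergence so that passing to the limit preserves positivity, and this is handled by the uniform bound on partial sums furnished by Theorem \ref{T2.4}.
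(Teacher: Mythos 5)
Your argument is correct and is essentially the paper's own proof: expand $f$ by Theorem \ref{T2.4}, note that the coefficients $2^{jn}\big(f,\Phi^\beta_{j,m}\big)$ are real because the $\Phi^\beta_{j,m}$ may be taken real by \eqref{2.12}, split the coefficients by sign, and use that the quarks $k^\beta_{j,m}$ are non-negative by \eqref{2.3}, \eqref{2.4}. The one case you do not address is $F^s_{\infty,q}(\rn)$ with $q<\infty$, which is admitted in \eqref{3.62} but lies outside Theorem \ref{T2.4} (there $p<\infty$ for $F$--spaces, and only $F^s_{\infty,\infty}=B^s_{\infty,\infty}$ is caught as a $B$--space); the paper settles these spaces separately by referring to direct arguments in \cite[p.\,89]{T20}.
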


\begin{proof}
Let $p<\infty$ for the $F$--spaces. We rely on Theorem \ref{T2.4}.
If $f$ is real then it follows from \eqref{2.12} that also the coefficients 
\begin{\eq}   \label{3.63}
\lambda^\beta_{j,m} (f) = 2^{jn} \big(f, \Phi^\beta_{j,m} \big)
\end{\eq}
in \eqref{2.30}, \eqref{2.31} are real. Let
\begin{\eq}   \label{3.64}
f^1 = \sum_{\substack{\beta \in \nat^n_0, j\in \no, \\ m \in \zn}}  \max \big( \lambda^\beta_{j,m} (f), 0 \big) \, k^\beta_{j,m}.
\end{\eq}
Then the desired decomposition \eqref{3.58}, \eqref{3.59} follows from Theorem \ref{T2.4} and \eqref{2.3}, \eqref{2.4}.
As far as
the spaces $F^s_{\infty,q} (\rn)$ with $s>0$ and $0<q \le \infty$ are concerned we refer the reader to \cite[p.\,89]{T20} for rather
direct arguments.
\end{proof}

\begin{remark}   \label{R3.11}
According to \cite[Theorem 3.6, p.\,88]{T20} there are also some other spaces $\Fs (\rn)$ having the positivity  property. On the other
hand, spaces $\As (\rn)$ with $0<p,q \le \infty$ and $s<\sigma^{n}_p$ do not have the positivity property. This follows from the
remarkable observation that any positive  distribution $f\in S'(\rn)$ can be represented as
\begin{\eq}   \label{3.65}
f(\vp) = \int_{\rn} \vp (x) \, \mu (\di x), \qquad \vp \in S(\rn),
\end{\eq}
where $\mu$ is a uniquely determined (positive) Radon measure. Then $f$ belongs at least locally to $B^0_{1,\infty} (\rn)$. This
requires $s \ge \sigma^{n}_p$. Details and references in particular to \cite{T06} and \cite{Mal95}
may be found in \cite[Section 3.2, pp.\,87--90]{T20}.
\end{remark}

\begin{remark}   \label{R3.12}
In contrast to \cite{Tri03}, \cite{T06} and \cite{T20} the positivity property of $\As (\rn)$ is based on the decomposition 
\eqref{3.58} for real elements as a refinement of the related general assertion in Theorem \ref{T2.4}. 
This natural simplification relies on the
(simple but not totally obvious) observation that the crucial building blocks $\Phi^\beta_{j,m}$ in \eqref{2.12} are real. This has
been overlooked when we dealt with the positivity property in \cite{Tri03} and \cite{T06}. In \cite{T20} we complemented what was
already known by corresponding assertions for the spaces $F^s_{\infty,q} (\rn)$. The above tiny improvement may justify that we returned again to this topic as a supplement of \cite[Section 3.2, pp.\,87--90]{T20}. If $f$ belongs to a space $\As (\rn)$ covered
by the above proposition then it can be decomposed by positive $f^l \in \As (\rn)$, $1 \le l \le 4$, such that
\begin{\eq}   \label{3.64a}
f= f^1 - f^2 + i f^3 - if^4
\end{\eq}
and
\begin{\eq}   \label{3.65a}
\|f \, | \As (\rn) \| \sim \sum^4_{l=1} \| f^l \, |\As (\rn) \|.
\end{\eq}
This follows from an obvious modification of \eqref{3.64}, where one does not need that $\Phi^\beta_{j,m}$ in \eqref{3.63} is real.
This is the version considered so far in the above--mentioned literature.
\end{remark}

\begin{remark}   \label{R3.13}
Any $f\in S'(\rn)$ can be decomposed as
\begin{\eq}   \label{3.66}
f = f^1 + i f^2 \qquad \text{where both $f^1$ and $f^2$ are real}.
\end{\eq}
This can be justified by elementary reasoning. But it follows also from the universal representation Theorem \ref{T2.30} using that
$\Phi^\beta_{j,m}$ is real. One can say more. According to \eqref{2.172} any $f\in S'(\rn)$ belongs for fixed $p$ and $q$ with
$0<p,q \le \infty$ to some space $\Bs (\rn, w_\delta)$, say, with $s<0$. Then one has by Theorem \ref{T2.23} that both $f^1$ and $f^2$
are elements of the same space and
\begin{\eq}   \label{3.67}
\|f \, |\Bs (\rn, w_\delta)\| \sim \|f^1 \, |\Bs (\rn, w_\delta)\| + \|f^2 \, |\Bs (\rn, w_\delta)\|
\end{\eq}
where the equivalence constants are independent of $f$. The discussion in Remark \ref{R3.11} shows that a further decomposition into
positive distributions similarly as in \eqref{3.64a}, \eqref{3.65a} is not possible in general.
\end{remark}

\section{Quarks in domains}    \label{S4}
\subsection{Definitions and preliminaries}    \label{S4.1}
We extended in \cite{T08} the wavelet theory for the spaces $\As (\rn)$ with $A \in \{B,F \}$, $0<p,q \le \infty$ ($p<\infty$ for 
$F$--spaces) and $s\in \real$ from $\rn$ to corresponding spaces $\As (\Om)$ on domains (= open sets) $\Om$ on \rn, and some suitable 
subspaces. One may ask for a quarkonial counterpart of this theory. This is the main topic of the present
 Section \ref{S4}. Whereas the 
wavelet theory for the spaces $\As (\Om)$ is comparable with its \rn--counterpart we could not reach the same level for the related
quarkonial counterpart. But we obtained some (as we hope) substantial assertions, discussing also a few shortcomings. This may serve
as a basis for future research. But first we collect some definitions and useful properties following mainly \cite{T08}.

Open sets in \rn, $n\in \nat$, are called domains. We always assume that a domain $\Om$ in $\rn$ is not empty, $\Om \not= \emptyset$,
and usually that it does not coincide with \rn, $\Om \not= \rn$. For $2 \le n\in \nat$,
\begin{\eq}   \label{4.1}
\Rn \ni x' \mapsto h(x') \in \real
\end{\eq}
is called a {\em Lipschitz function} (on $\Rn$) if there is a number $c>0$ such that
\begin{\eq}   \label{4.2}
|h(x') - h(y')| \le c \, |x' - y'| \quad \text{for all $x'\in \Rn$, $y' \in \Rn$}.
\end{\eq}
A {\em special Lipschitz domain} in \rn, $2\le n \in \nat$, is the collection of all points $x= (x', x_n)$ with $x' \in \Rn$ such that
\begin{\eq}   \label{4.3}
h(x') <x_n <\infty,
\end{\eq}
where $h(x')$ is a Lipschitz function according to \eqref{4.1}, \eqref{4.2}. A {\em bounded Lipschitz domain} in \rn, $2\le n \in 
\nat$, is a bounded domain $\Om$ in $\rn$ where the boundary $\Gamma = \pa \Om$ can be covered by finitely many open balls $B_j$
with $j=1, \ldots, J$, centered at $\Gamma$ such that
\begin{\eq}   \label{4.4}
B_j \cap \Om = B_j \cap \Om_j \qquad \text{for} \quad j=1, \ldots, J,
\end{\eq}
where $\Om_j$ are rotations of suitable special Lipschitz domains in \rn. Let $l(Q)$ be the side--length of a finite cube $Q$ in $\rn$
with sides parallel to the axes of coordinates. A domain (= open set) $\Om$ in $\rn$ with $\Om \not= \rn$ and $\Gamma = \pa \Om$ is 
said to be $E$--thick (exterior thick) if one finds for any interior cube $Q^i \subset \Om$ with
\begin{\eq}   \label{4.5}
l (Q^i) \sim 2^{-j} \qquad \dist (Q^i, \Gamma) \sim 2^{-j}, \qquad j \ge j_0 \in \nat,
\end{\eq}
a complementing  exterior cube $Q^{e} \subset \Om^c = \rn \setminus \Om$ with
\begin{\eq}   \label{4.6}
l(Q^{e}) \sim 2^{-j}, \qquad \dist(Q^{e},\Gamma) \sim \dist (Q^i, Q^e) \sim 2^{-j}, \qquad j \ge j_0 \in \nat.
\end{\eq}
Here
\begin{\eq}  \label{4.7}
\dist (\Gamma^1, \Gamma^2) = \inf \big\{ |x^1 - x^2|: \ x^1 \in \Gamma^1, \ x^2 \in \Gamma^2 \big\}
\end{\eq}
for two sets $\Gamma^1$ and $\Gamma^2$ in \rn. One finds in \cite[Section 3.1, pp.\,69--77]{T08} a detailed discussion of diverse
types of sets and domains in $\rn$ and their interrelations. But the above cases are sufficient for our purposes. Special and bounded
Lipschitz domain $\Om$ in $\rn$ are $E$--thick with $|\Gamma| =0$, where $|\Gamma|$ is the Lebesgue measure of $\Gamma = \pa \Om$.
The classical snowflake domain in $\real^2$ is also $E$--thick and $|\Gamma| =0$. For arbitrary $E$--thick domains in $\rn$ one has not
necessarily $|\Gamma| =0$, counter--examples may be found in \cite[p.\,75]{T08}. But this is not needed in what follows.

Let $\Om$ be an arbitrary domain in \rn. Then $D(\Om) = C^\infty_0 (\Om)$ stands for the collection of all complex--valued infinitely
differentiable functions in $\rn$ with compact support in \Om. Let $D'(\Om)$ be the dual space of all distributions on \Om. Let $g \in
S'(\rn)$. Then we denote by $g|\Om$ its restriction to \Om,
\begin{\eq}   \label{4.8}
g|\Om \in D'(\Om), \qquad (g|\Om)(\vp) = g(\vp) \qquad \text{for} \quad \vp \in D(\Om).
\end{\eq}

\begin{definition}   \label{D4.1}
Let $\As (\rn)$ with $A \in \{B,F \}$, $s\in \real$ and $0<p,q \le \infty$ be the spaces as introduced in Definition \ref{D1.1}. Let 
$\Om$ be an arbitrary domain in $\rn$ with $\Om \not= \rn$.
\cm
{\em (i)} Then
\begin{\eq}   \label{4.9}
\As (\Om) = \{ f \in D'(\Om): \ f = g|\Om \ \text{for some $g\in \As (\rn)$} \},
\end{\eq}
\begin{\eq}   \label{4.10}
\| f \, | \As (\rn) \| = \inf \| g \, | \As (\rn) \|
\end{\eq}
where the infimum is taken over all $g  \in \As (\rn)$ with $g|\Om =f$.
\cm
{\em (ii)} Let
\begin{\eq}  \label{4.11}
\wt{A}^s_{p,q} (\ol{\Om}) = \{ f \in \As (\rn): \ \supp f \subset \ol{\Om} \}.
\end{\eq}
Then
\begin{\eq}   \label{4.12}
\wt{A}^s_{p,q} (\Om) = \{ f \in D'(\Om): \ f = g|\Om \ \text{for some $g\in \wt{A}^s_{p,q} (\ol{\Om})$} \},
\end{\eq}
\begin{\eq}   \label{4.13}
\| f \, |\wt{A}^s_{p,q} (\Om) \| = \inf \| g \, | \As (\rn) \|,
\end{\eq}
where the infimum is taken over all $g\in \wt{A}^s_{p,q} (\ol{\Om})$ with $g|\Om =f$.
\end{definition}   

\begin{remark}   \label{R4.2}
This coincides with \cite[Definition 2.1, p.\,28]{T08}. There one finds also some discussions, including the question under which 
circumstances one can identify $\wt{A}^s_{p,q} (\Om)$ with $\wt{A}^s_{p,q} (\ol{\Om})$. But this does not play any role in what
follows. We prefer the $F$--spaces for reasons which will be clear soon.
\end{remark}

Let again $\psi$ be a compactly supported $C^\infty$ function in $\rn$ such that
\begin{\eq}   \label{4.14}
\sum_{M \in \zn} \psi_M (x) =1, \qquad \psi_M (x) = \psi (x-M), \quad x \in \rn,
\end{\eq}
as already used in \eqref{2.36}. Let $s\in \real$ and $0<p,q \le \infty$. Then
\begin{\eq}   \label{4.15}
\| f \, | \Fs (\rn) \| \sim \Big( \sum_{M \in \zn} \| \psi_M f \, | \Fs (\rn) \|^p \Big)^{1/p}
\end{\eq}
are equivalent quasi--norms, \cite[Theorem 2.35, p.\,43]{T20} and  the references given there (this includes $F^s_{\infty,q} (\rn)$).
We used in \eqref{2.111}, \eqref{2.112} the weighted extension.
Secondly we recall what is meant by {\em local homogeneity}, restricted to the cases of interest later on. Let again
\begin{\eq}   \label{4.16}
\sigma^{n}_p = n \Big( \max \big( \frac{1}{p}, 1 \big) -1 \Big), \qquad \sigma_{p,q}^{n} = n \Big( \max \big( \frac{1}{p},
\frac{1}{q}, 1 \big) - 1 \Big),
\end{\eq}
$0< p,q \le \infty$. Let
\begin{\eq}   \label{4.17}
0<p \le \infty, \quad 0<q \le \infty, \qquad s> \sigma^n_p
\end{\eq}
(with $q>1$ if $p=\infty$) and $0<\lambda \le 1$. Then
\begin{\eq}   \label{4.18}
\| f(\lambda \cdot) \, | \Fs (\rn) \| \sim \lambda^{s- \frac{n}{p}} \, \| f \, | \Fs (\rn) \|
\end{\eq}
for
\begin{\eq}   \label{4.19}
f \in \Fs (\rn) \qquad \text{with} \quad \supp f \subset \{ x \in \rn : \ |x| \le \lambda \}
\end{\eq}
where the equivalence constants in \eqref{4.18} are independent of $\lambda$. This is covered by \cite[Section 3.3, pp.\,90--93]{T20}
and the references given there. It applies in particular to $\Cc^s (\rn) = B^s_{\infty, \infty} (\rn) = F^s_{\infty, \infty} (\rn)$,
$s>0$.

Let $\Om$ be an arbitrary domain (= open set) in $\rn$ with $\Om \not= \rn$, what means that $\Gamma = \pa \Om \not= \emptyset$. Let
again
$Q_{J,M} = 2^{-J} M + 2^{-J} (0,1)^n$ with  $J \in \no$ and $M \in \zn$ be the above cubes
as already used several times, beginning with \eqref{1.7}. Let $ C Q_{J,M}$, $C >0$, be the
cube concentric with $Q_{J,M}$ having side--length $C 2^{-J}$. Let 
\begin{\eq}   \label{4.20}
\Om = \bigcup_{J,M} \ol{Q_{J,M}}, \qquad \dist \big( C Q_{J,M}, \Gamma \big) \sim 2^{-J}, \quad J \in \nat,
\end{\eq}
complemented by $\dist (C Q_{0,M}, \Gamma ) \ge c >0$, be a Whitney decomposition of $\Om$ by pairwise disjoint cubes $Q_{J,M}$. This
is an immaterial modification of related Whitney decompositions as described in \cite[pp.\,30--31]{T08} and \cite[p.\,95]{T20}. All 
what follows is justified if $C>2$ is chosen sufficiently large. Let $\vr = \{ \vr_{J,M} \}$ be a related resolution of unity with
\begin{\eq}   \label{4.21}
\supp \vr_{J,M} \subset 2 Q_{J,M}, \qquad \big| D^\gamma \vr_{J,M} (x) \big| \le c_\gamma 2^{J|\gamma|}, \quad x\in \Om, \quad \gamma
\in \nat^n_0,
\end{\eq}
for some $c_\gamma >0$ and
\begin{\eq}   \label{4.22}
\sum^\infty_{J=0} \sum_M \vr_{J,M} (x) =1 \qquad \text{if} \quad x \in \Om.
\end{\eq}
To avoid awkward formulations  we assume tacitly that there are cubes $Q_{J,M}$ with $J=0$ in \eqref{4.20}.

\begin{definition}   \label{D4.3}
Let $\Om$ be an arbitrary domain in $\rn$ with $\Om \not= \rn$. Let
\begin{\eq}   \label{4.23}
0<p,q \le \infty \qquad \text{and} \quad s> \sigma^{n}_p
\end{\eq}
$(q>1$ if $p= \infty)$. Let $\vr = \{ \vr_{J,M} \}$ be the above resolution of unity. Then $F^{s, \rloc}_{p,q} (\Om)$ collects all
$f\in D'(\Om)$ such that
\begin{\eq}   \label{4.24}
\| f \, | F^{s, \rloc}_{p,q} (\Om) \|_{\vr} = \Big( \sum^\infty_{J=0} \sum_M \| \vr_{J,M} f \, | \Fs (\rn) \|^p \Big)^{1/p}
\end{\eq}
is finite $($with the usual modification if $p=\infty)$.
\end{definition}

\begin{remark}   \label{R4.4}
This coincides  with the corresponding part of \cite[Definition 3.13, pp.\,95--96]{T20}. There one finds also the history of these
so--called {\em refined localization spaces} and related references. This covers in  particular the H\"{o}lder--Zygmund spaces
\begin{\eq}   \label{4.25}
\Cc^{s,\rloc} (\Om) = F^{s, \rloc}_{\infty, \infty} (\Om) = B^{s, \rloc}_{\infty, \infty} (\Om), \qquad s>0.
\end{\eq}
In \eqref{4.24} one has to understand $\vr_{J,M} f \in \Fs (\rn)$ as the extension of $\vr_{J,M} f \in D'(\Om)$ by zero outside of \Om.
The above spaces $F^{s, \rloc}_{p,q} (\Om)$ are quasi--Banach spaces (Banach spaces if $p \ge 1$, $q\ge 1$). They are independent of
$\vr = \{\vr_{J,M} \}$ (equivalent quasi--norms), \cite[Theorem 3.15, p.\,96]{T20}.
\end{remark}

One may ask how the spaces introduced in the Definitions \ref{D4.1} and \ref{D4.3} are related to each other. We collect what is known
so far. Let $\Om$ be an arbitrary domain in $\rn$ with $\Om \not= \rn$. Let $d(x) = \dist (x,\Om)$, $\Gamma = \pa \Om$, and $\delta 
(x) = \min \big( d(x), 1 \big)$, $x\in \Om$, be the modified distance of $x$ to $\Gamma$ based on \eqref{4.7}. Let $\sigma^n_{p,q}$ be
as in \eqref{4.16}. 

\begin{theorem}   \label{T4.5}
{\em (i)} Let $\Om$ be an arbitrary domain in $\rn$ with $\Om \not= \rn$. Let
\begin{\eq}   \label{4.26}
0<p,q \le \infty \quad \text{and} \quad s > \sigma^n_{p,q}
\end{\eq}
$($with $q=\infty$ if $p= \infty)$. Then
\begin{\eq}      \label{4.27}
F^{s, \rloc}_{p,q} (\Om) \hra \wt{F}^s_{p,q} (\Om).
\end{\eq}
Furthermore, $f \in F^{s, \rloc}_{p,q} (\Om)$ if, and only if,
\begin{\eq}   \label{4.28}
\| f\, | \Fs (\Om) \| + \| \delta^{-s} f \, | L_p (\Om) \| \sim \| f\, | F^{s, \rloc}_{p,q} (\Om) \|
\end{\eq}
is finite $($equivalent quasi--norms$)$. 
\cm
{\em (ii)} Let $\Om$ be an $E$--thick domain in $\rn$ and let $p,q$ and $s$ be as in \eqref{4.26}. Then
\begin{\eq}    \label{4.29}
F^{s, \rloc}_{p,q} (\Om) = \wt{F}^s_{p,q} (\Om).
\end{\eq}
\end{theorem}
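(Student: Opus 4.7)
The plan is to prove (i) in two steps — first the embedding \eqref{4.27}, then the equivalent quasi--norm \eqref{4.28} — and then to deduce the non--trivial inclusion in (ii) via a Hardy--type inequality driven by the $E$--thickness of $\Gamma$. For \eqref{4.27} I take $f\in\Fsr(\Om)$, extend it by zero to $\wt{f}\in D'(\rn)$, and assemble $\wt{f}=\sum_{J,M}\vr_{J,M}f$. Combining the localization identity \eqref{4.15} with a coarse partition $\{\psi_L\}_{L\in\zn}$ at scale one and a $u$--triangle inequality ($u=\min(1,p,q)$) for the cross--scale sum within each $\psi_L$--patch yields
\[
\|\wt{f}\,|\,\Fs(\rn)\|^{p}\le c\sum_{J,M}\|\vr_{J,M}f\,|\,\Fs(\rn)\|^{p},
\]
so that $\wt{f}\in\wt{F}^{s}_{p,q}(\ol{\Om})$ and $\Fsr(\Om)\hra\wt{F}^{s}_{p,q}(\Om)$.

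For \eqref{4.28}, the direction $\|f\,|\,\Fs(\Om)\|+\|\delta^{-s}f\,|\,L_p(\Om)\|\lesssim\|f\,|\,\Fsr(\Om)\|$ is immediate: the first summand follows from \eqref{4.27}, while for the second one uses $\delta(x)\sim2^{-J}$ on $Q_{J,M}$ to obtain
\[
\|\delta^{-s}f\,|\,L_p(\Om)\|^{p}\sim\sum_{J,M}2^{Jsp}\|f\|_{L_p(Q_{J,M})}^{p}\le c\sum_{J,M}2^{Jsp}\|\vr_{J,M}f\|_{L_p(\rn)}^{p},
\]
and then invokes the local homogeneity \eqref{4.18} with $\lambda=2^{-J}$, together with the embedding $\Fs(\rn)\hra L_p(\rn)$ available since $s>\sigma^{n}_{p,q}\ge\sigma^{n}_{p}$, to bound $\|\vr_{J,M}f\|_{L_p(\rn)}\le c\,2^{-Js}\|\vr_{J,M}f\,|\,\Fs(\rn)\|$. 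The reverse direction rests on the cutoff estimate
\[
\|\vr_{J,M}g\,|\,\Fs(\rn)\|^{p}\le C\|g\,|\,\Fs(cQ_{J,M})\|^{p}+C\,2^{Jsp}\|g\|_{L_p(cQ_{J,M})}^{p}
\]
applied to any extension $g\in\Fs(\rn)$ of $f$; this reflects that $\vr_{J,M}$ becomes a uniformly bounded smooth multiplier after dilation by $2^{J}$, with the second term arising as the Leibniz--rule boundary contribution. Summing in $(J,M)$, the first piece is controlled by $\|g\,|\,\Fs(\rn)\|^{p}$ through the bounded overlap of the cubes $cQ_{J,M}$, and the second reassembles into $\|\delta^{-s}f\,|\,L_p(\Om)\|^{p}$.

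Part (ii) then reduces, via \eqref{4.27} and \eqref{4.28}, to the Hardy--type inequality
\[
\|\delta^{-s}\wt{f}\,|\,L_p(\Om)\|\le c\,\|\wt{f}\,|\,\Fs(\rn)\|,\qquad\wt{f}\in\Fs(\rn),\ \supp\wt{f}\subset\ol{\Om},
\]
and this is exactly where $E$--thickness enters. For each interior Whitney cube $Q^{i}\sim Q_{J,M}$ the assumption furnishes a companion exterior cube $Q^{e}\subset\Om^{c}$ of side--length $\sim2^{-J}$ at distance $\sim2^{-J}$ on which $\wt{f}\equiv0$. On a doubled cube $\wt{Q}$ containing both $Q^{i}$ and $Q^{e}$ the vanishing of $\wt{f}$ on $Q^{e}$ supplies the cancellations that would otherwise require moment conditions, and a local--means (equivalently, atomic) characterization of $\Fs(\rn)$ then yields
\[
\|\wt{f}\|_{L_p(Q^{i})}\le c\,2^{-Js}\|\wt{f}\,|\,\Fs(c\wt{Q})\|
\]
uniformly in $J,M$; summing $\ell_{p}$ in $(J,M)$ with bounded overlap completes the argument. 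The step I expect to be the main obstacle is this very last one: converting the \emph{pointwise} vanishing supplied by $E$--thickness into an $\ell_{p}$--summable $\Fs$--estimate compatible with the inner $\ell_{q}$ structure of the $F$--spaces. This compatibility is the reason the statement is cleanest for $F$--spaces and why atomic or local--means tools, with their sharp localization, are the natural vehicle here, in contrast to Fourier--analytical decompositions whose supports are spread out.
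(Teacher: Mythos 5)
Your overall architecture (embedding, then the norm equivalence \eqref{4.28}, then a Hardy--type inequality driven by $E$--thickness for part (ii)) is the same route as the sources the paper itself relies on -- the paper gives no new proof of Theorem \ref{T4.5} but refers in Remark \ref{R4.6} to \cite[(2.110)]{T08} (atomic arguments without moment conditions), \cite[Theorem 5.10]{T01}/\cite{Scha14} for \eqref{4.28}, and \cite[Proposition 3.10]{T08} (wavelets, going back to \cite{TrW96}) for \eqref{4.29}. However, two of your key steps fail as written. For \eqref{4.27}, the $u$--triangle inequality with $u=\min(1,p,q)$ only gives $\big\|\sum_{J,M}\vr_{J,M}f\,|\Fs(\rn)\big\|^{u}\le\sum_{J,M}\|\vr_{J,M}f\,|\Fs(\rn)\|^{u}$, and since $u\le p$ this is an $\ell_u$--bound, which is weaker than (not equivalent to) the required $\ell_p$--bound; no quasi--norm triangle inequality produces the $p$--power sum when the Whitney cubes accumulate at the boundary inside a single unit patch. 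The actual mechanism is the $L_p$--localization of the $F$--scale at the sequence level: expand each $\vr_{J,M}f$ by homogeneity \eqref{4.18} into atoms located in $cQ_{J,M}$ at levels $j\ge J$ (no moment conditions -- this is exactly where $s>\sigma^n_{p,q}$ enters), and sum in $f^s_{p,q}$, where the bounded overlap of supports gives the $\ell_p$--summation.

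The second, more serious gap is in the converse direction of \eqref{4.28}: the step ``summing in $(J,M)$, the first piece is controlled by $\|g\,|\Fs(\rn)\|^{p}$ through the bounded overlap of the cubes $cQ_{J,M}$'' is false, because restriction quasi--norms do not sum in $\ell_p$ across scales. Take $\Om$ a bounded Lipschitz domain, $s>n/p$, and $g$ a smooth bump equal to $1$ near a boundary point; since $\Fs(\rn)\hra L_\infty(\rn)$, every extension of $g|_{cQ_{J,M}}$ has norm $\ge c>0$, so $\|g\,|\Fs(cQ_{J,M})\|\ge c$ for all the (infinitely many) Whitney cubes in that neighbourhood and $\sum_{J,M}\|g\,|\Fs(cQ_{J,M})\|^{p}=\infty$, while $\|g\,|\Fs(\rn)\|<\infty$. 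The point of the cited proofs (differences in \cite{T01}, wavelets in \cite{Scha14}) is precisely to avoid this: the low--frequency part of $g$ near $Q_{J,M}$ must be charged to the term $2^{Js}\|g\|_{L_p}$, i.e.\ to $\|\delta^{-s}f\,|L_p(\Om)\|$, and only the fine--scale detail (wavelet/atomic coefficients at levels $j\ge J$ located near $Q_{J,M}$) may enter the $\Fs$--part, which then sums by disjointness in the sequence space. The same defect recurs in your part (ii): the local estimate $\|\wt f\|_{L_p(Q^i)}\le c\,2^{-Js}\|\wt f\,|\Fs(c\wt Q)\|$ with a restriction norm on the right cannot simply be summed over $(J,M)$; you yourself flag this summation, compatible with the inner $\ell_q$ structure, as the main obstacle, and it is exactly the point settled in \cite[Proposition 3.10]{T08} by sequence--space (wavelet) arguments rather than by quasi--norm manipulations. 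So the plan is sensible, but the two crucial summation steps are either incorrect or left open, and they are the substance of the theorem.
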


\begin{remark}   \label{R4.6}
The embedding \eqref{4.27} is covered by \cite[(2.110), p.\,45]{T08}. It relies on atomic arguments without moment conditions as in
Theorem \ref{T1.7}(ii). This explains why $s> \sigma^n_p$ in Definition \ref{D4.3} is strengthened by $s> \sigma^n_{p,q}$. The 
coincidence \eqref{4.29} for $E$--thick domains goes back to  \cite[Proposition 3.10, pp.\,77--78]{T08} justified by wavelet arguments.
Let $L_p (\Om, \delta^{-s})$, $0<p \le \infty$, $s>0$, be the related weighted $L_p$--space quasi--normed by
\begin{\eq}   \label{4.30}
\| f \, | L_p (\Om, \delta^{-s}) \| = \Big( \int_{\Om} |f(x)|^p \, \delta^{-sp} (x) \, \di x \Big)^{1/p},
\end{\eq}
obviously modified if $p= \infty$. Then one has according to \cite[Theorem 5.10, p.\,54]{T01} for bounded $C^\infty$ domains $\Om$
in $\rn$ and $p,q,s$ as above that
\begin{\eq}   \label{4.31}
\wt{F}^s_{p,q} (\Om) = \Fs (\Om) \cap L_p (\Om, \delta^{-s} )
\end{\eq}
in the understanding that the space on the right--hand side of \eqref{4.31} is a subspace  of $\Fs (\Om)$ (already continuously 
embedded into some space $L_r (\Om)$ with $1<r \le \infty$), quasi--normed by the left--hand side of \eqref{4.28}. The equivalence 
of the quasi--norms in \eqref{4.28} follows now from \eqref{4.29}. The complicated proof in \cite{T01} relies on characterizations of
the related $F$--spaces in terms of differences. The extension of this assertion to arbitrary domains $\Om$ in $\rn$ in 
\cite[Proposition 24, p.\,54]{Scha14} is based on wavelet arguments. The restriction to $p<\infty$, $q<\infty$ in \cite{Scha14} is
not really needed: The crucial arguments used there are based on corresponding  assertions in \cite{T08} which apply to $p,q,s$ as
in \eqref{4.26} (with $q=\infty$ if $p=\infty$) and can be taken over resulting in
\begin{\eq}   \label{4.32}
F^{s, \rloc}_{p,q} (\Om) = \Fs (\Om) \cap L_p (\Om, \delta^{-s} )
\end{\eq}
for arbitrary domains $\Om$  in $\rn$ and all admitted parameters, again in the understanding as explained in connection with 
\eqref{4.31}.
\end{remark}

\subsection{Main assertions}   \label{S4.2}
Let again $\Om$ be an arbitrary domain (= open set) in $\rn$ with $\Om \not= \rn$. We constructed in \cite[Theorem 2.33, p.\,49]{T08}
an orthonormal wavelet basis of Daubechies type in $L_2 (\Om)$. This representation had been  extended in \cite[Theorem 2.36, 
p.\,53--54]{T08} to $L_p (\Om)$ with $1<p<\infty$ and in \cite[Theorem 2.38, pp.\,54--55]{T08} to $F^{s,\rloc}_{p,q} (\Om)$ with $p,q,s$
as in \eqref{4.26} ($q=\infty$ if $p=\infty$). Also the proof of \eqref{4.29} for $E$--thick domains in \cite[pp.\,77--79]{T08} depends
on wavelet arguments. Basically (in a somewhat hidden way) the arguments rely on the local homogeneity \eqref{4.18} applied to 
\eqref{4.24}. The question arises whether there is a quarkonial counterpart of these observations for the spaces $F^{s,\rloc}_{p,q}
(\Om)$  (where one does not need moment conditions for related atoms and quarks). This is the case. We fix the outcome and outline the
proofs.

Let $\zom$ be the collection of all $(J,M) \in \no \times \zn$ needed in \eqref{4.20}. Let
\begin{\eq}   \label{4.33}
\zOm = \big\{ (j,m) \in \no \times \zn: \ Q_{j,m} \subset Q_{J,M} \ \text{for some $(J,M) \in \zom$} \big\}.
\end{\eq}
In particular $j \ge J$. Let again $\chi_{j,m}$ be the characteristic function of $Q_{j,m} = 2^{-j}m + 2^{-j} (0,1)^n$. Then
$f^s_{p,q} (\Om)^{\vk}$ with $s\in \real$, $\vk \in \real$ and $0<p,q \le \infty$ ($q=\infty$ if $p=\infty$)
is the collection of all sequences
\begin{\eq}    \label{4.34}
\lambda = \big\{ \lambda^\beta: \ \beta \in \nat^n_0 \big\}, \qquad \lambda^\beta = \big\{ \lambda^\beta_{j,m} \in \comp: \ (j,m) \in
\zOm \big\}
\end{\eq}
such that
\begin{\eq}   \label{4.35}
\| \lambda \, | f^s_{p,q} (\Om)^{\vk} \| = \sup_{\beta \in \nat^n_0} 2^{\vk |\beta|} \Big\| \Big( \sum_{(j,m) \in \zOm} 2^{jsq}
\big| \lambda^\beta_{j,m} \chi_{j,m} (\cdot) \big|^q \Big)^{1/q} \, \big| L_p (\Om) \Big\|
\end{\eq}
is finite. This is the $\Om$--version of Definition \ref{D2.2} with $f^s_{\infty, \infty} (\rn)^{\vk} = b^s_{\infty, \infty}(\rn)^{\vk}
$, now incorporated in the $F$--scale.  If $C>2$ in \eqref{4.20} with $\Gamma = \pa \Om$ is chosen 
sufficiently large then all what follows is justified. If $0<p<\infty$, $0<q\le \infty$ then $\chi_{j,m}$ in \eqref{4.35} can be 
replaced by smaller characteristic functions as formulated in \cite[Proposition 1.33(i), p.\,19]{T06} with a reference to \cite{FrJ90}.
Let again $d(x) = \dist (x, \Gamma)$, $x\in \Om$, $\Gamma = \pa \Om$, and
\begin{\eq}   \label{4.36}
d_{j,m} = \min \big( 1, d(2^{-j}m) \big), \qquad (j,m) \in \zOm.
\end{\eq}
Let again $k^\beta_{j,m}$ and $\Phi^\beta_{j,m}$ be the quarkonial building blocks as introduced in \eqref{2.4}, \eqref{2.11}. Let 
$\ve >0$ be as in \eqref{2.15} and Proposition \ref{P2.1}. As usual, weak$^*$--convergence, or simply convergence, of $f_l \to f$ in
$D'(\Om)$  if $l \to \infty$ means
\begin{\eq}   \label{4.37}
f_l (\vp) \to f(\vp) \qquad \text{for any} \quad \vp \in D(\Om).
\end{\eq}
We ask for an $\Om$--version of Theorem \ref{T2.4} for the related spaces $F^{s, \rloc}_{p,q} (\Om)$ according to Definition 
\ref{D4.3}.

\begin{theorem} \label{T4.7}
Let $\Om$ be an arbitrary domain in $\rn$ with $\Om \not= \rn$. Let
\begin{\eq}   \label{4.38}
0<p,q \le \infty \qquad \text{and} \quad s > \sigma^{n}_{p,q}
\end{\eq}
$(q=\infty$ if $p=\infty)$. Let $\vk > -\ve$. Then $f\in D'(\Om)$  belongs to $\Fsr (\Om)$ if, and only if, it can be represented as
\begin{\eq}   \label{4.39}
f = \sum_{\beta \in \nat^n_0, (j,m) \in \zOm} \lambda^\beta_{j,m} \, k^\beta_{j,m}, \qquad \lambda \in f^s_{p,q} (\Om)^{\vk},
\end{\eq}
unconditional convergence being in $D'(\Om)$. Furthermore,
\begin{\eq}   \label{4.40}
\| f \, | \Fsr (\Om) \| \sim \inf \| \lambda \, | f^s_{p,q} (\Om)^{\vk} \|
\end{\eq}
where the infimum is taken over all admissible representations \eqref{4.39}. In addition there are two numbers $c_1 >0$, $c_2 >0$ and
real functions $\Psi^\beta_{j,m} \in D(\Om)$ with
\begin{\eq}   \label{4.41}
\supp \Psi^\beta_{j,m} \subset \big\{ y \in \Om: \ |2^{-j}m -y | \le c_1 d_{j,m}, \ d(y) \ge c_2 d_{j,m} \big\}
\end{\eq}
such that
\begin{\eq}   \label{4.42}
f = \sum_{\beta \in \nat^n_0, (j,m) \in \zOm} \lambda^\beta_{j,m} (f) \, k^\beta_{j,m},
\end{\eq}
unconditional convergence being in $D'(\Om)$, with
\begin{\eq}   \label{4.43}
\| f \, | \Fsr (\Om) \| \sim \| \lambda (f) \, | f^s_{p,q} (\Om)^{\vk} \|
\end{\eq}
where
\begin{\eq}   \label{4.44}
\lambda^\beta_{j,m} (f) = 2^{jn} \, \big( f, \Psi^\beta_{j,m} \big), \qquad f \in D'(\Om).
\end{\eq}
\end{theorem}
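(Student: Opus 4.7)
The plan is to prove Theorem \ref{T4.7} by gluing together the $\rn$-assertions of Theorem \ref{T2.4} via the Whitney partition of unity $\{\vr_{J,M}\}$ from \eqref{4.20}--\eqref{4.22}, combined with the local homogeneity \eqref{4.18}. The argument proceeds in three stages, paralleling the proof of Theorem \ref{T2.4} but adapted to the constraint $(j,m) \in \zOm$ and the local definition \eqref{4.24}.

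For the sufficiency direction, I would take a candidate representation \eqref{4.39}. The condition $(j,m) \in \zOm$ together with $C>2$ chosen sufficiently large in \eqref{4.20} ensures $\supp k^\beta_{j,m}$ lies well inside some Whitney cube, so each $\vr_{J,M} f$ receives contributions from quarks in only $O(1)$ neighbouring Whitney cubes. Applying Theorem \ref{T2.4} to the localised piece $\vr_{J,M} f$ viewed on $\rn$ yields $\|\vr_{J,M} f \,|\, \Fs(\rn)\| \le c\,\|\lambda^{J,M} \,|\, f^s_{p,q}(\rn)^\vk\|$, where $\lambda^{J,M}$ is the restriction of the coefficient sequence to indices attached to $Q_{J,M}$. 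Taking $p$-th powers and summing over $(J,M) \in \zom$ using the bounded overlap of Whitney cubes produces $\|f \,|\, \Fsr(\Om)\| \le c\,\|\lambda \,|\, f^s_{p,q}(\Om)^\vk\|$ through Definition \ref{D4.3}. The unconditional convergence in $D'(\Om)$ follows locally from Corollary \ref{C2.7} applied cube by cube.

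For the necessity direction together with the construction of the local means, I would proceed dually. Given $f \in \Fsr(\Om)$, each $\vr_{J,M} f$ (extended by zero) belongs to $\Fs(\rn)$ with compact support in $2Q_{J,M} \subset \Om$. Theorem \ref{T2.4} produces a quarkonial representation of $\vr_{J,M} f$ with canonical coefficients $2^{jn}(\vr_{J,M} f, \Phi^\beta_{j,m})$; since $\vr_{J,M} f$ is compactly supported in $\Om$, only quark positions with $2^{-j}m$ lying near $Q_{J,M}$ contribute significantly, and these indices fall in $\zOm$. Summing these local representations over $(J,M) \in \zom$ reassembles \eqref{4.39}. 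To obtain the canonical coefficients \eqref{4.44}, I would set $\Psi^\beta_{j,m}(y) = \eta_{j,m}(y)\,\Phi^\beta_{j,m}(y)$, where $\eta_{j,m} \in D(\Om)$ is a cutoff that equals one on $\{|y-2^{-j}m| \le c_1 d_{j,m}/2\}$ and is supported in the set \eqref{4.41}. Because $(j,m) \in \zOm$ forces $d_{j,m} \gtrsim 2^{-j}$, this cutoff genuinely sits in $D(\Om)$ at the natural scale. The real-valuedness required in the statement is inherited from \eqref{2.12} once $\eta_{j,m}$ is chosen real.

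The main obstacle is controlling the truncation error $(\Phi^\beta_{j,m} - \Psi^\beta_{j,m})$ uniformly in $\beta \in \nat^n_0$ and $(j,m) \in \zOm$. This difference is supported outside a ball of radius $\sim d_{j,m}$ around $2^{-j}m$, so Proposition \ref{P2.1}(ii) gives it the same exponential decay $2^{-\vk|\beta|}$ together with rapid spatial decay $\langle 2^j x - m\rangle^{-N}$. Pairing with $\vr_{J',M'} f$ for each Whitney cube and summing, one verifies that the replacement $\Phi^\beta_{j,m} \rightsquigarrow \Psi^\beta_{j,m}$ affects the coefficient sequence only by a term absorbable into the molecular estimate underlying \eqref{2.43}. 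Combining this dual bound with the sufficiency estimate proves both \eqref{4.40} and \eqref{4.43}, with the canonical coefficients \eqref{4.44} realising the infimum. The delicate technical piece is the simultaneous bookkeeping of the three scales $2^{-j}$, $2^{-J}$ and $d_{j,m}$, and matching the partition of unity $\vr_{J,M}$ with the cutoffs $\eta_{j,m}$ so that the Fatou property and the weak$^*$ convergence in $D'(\Om)$ remain intact throughout.
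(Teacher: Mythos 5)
There is a genuine gap, and it sits exactly where the paper's proof does its real work. In your necessity direction you apply Theorem \ref{T2.4} to $\vr_{J,M}f$ \emph{at the original scale}. The canonical expansion \eqref{2.32} of this compactly supported piece contains quark terms for \emph{all} $j\in\no$, in particular the coarse scales $j<J$: those quarks have supports of diameter $\sim 2^{J_0-j}\gg 2^{-J}$ (with $2^{J_0}$ the size of $\supp k$), so they stick out across $\pa\Om$, are not of the admissible form, and their indices do not lie in $\zOm$; and they are not ``insignificant'' --- their coefficients are genuine parts of $\vr_{J,M}f$, not error terms. The paper removes this obstruction by first rescaling each Whitney piece to unit scale via the local homogeneity \eqref{4.18}, expanding $\vr_{J,M}(2^{-J}\cdot)f(2^{-J}\cdot)$ as in \eqref{4.48}, and then re-transforming with $k^\beta_{j,m}(2^Jx)=k^\beta_{j+J,m}(x)$, so that every surviving index automatically has $j\ge J$ and lands in $\zOm$; you invoke \eqref{4.18} in your opening sentence but never use it at this indispensable point. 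Moreover, even after localisation, ``only nearby quark positions contribute significantly'' is not enough: one needs an \emph{exact} restricted representation, i.e.\ that for compactly supported $f$ the canonical expansion needs only the terms with $\supp k^\beta_{j,m}\cap\supp f\neq\emptyset$ (the paper's Step 1, \eqref{4.47}). This is not a decay estimate; it follows from the specific construction of the coefficients in [T06, (3.110), (3.111)] and the total independence of the two systems, and it is precisely what lets one avoid cutting off $f$ or the quarks.

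The second gap concerns your kernels. Setting $\Psi^\beta_{j,m}=\eta_{j,m}\Phi^\beta_{j,m}$ and ``absorbing the truncation error'' can at best give the two-sided norm estimate; it cannot give the exact reproducing identity \eqref{4.42}, because replacing $\Phi^\beta_{j,m}$ by $\eta_{j,m}\Phi^\beta_{j,m}$ changes the coefficients by an amount that does not sum to zero against the quarks, so with your coefficients \eqref{4.42} is simply false as an identity in $D'(\Om)$ (note also that the ``untruncated'' pairings $(f,\Phi^\beta_{j,m})$ do not even make sense for $f\in D'(\Om)$, which is why compactly supported kernels are forced). In the paper the kernels $\Psi^\beta_{j,m}$ are not cut-off versions of $\Phi^\beta_{j,m}$: they arise as finite combinations of the rescaled functions $\vr_{J,M}(2^{-J}\cdot)\Phi^\beta_{j,m}(\cdot)$ produced by \eqref{4.48}, so exactness of \eqref{4.42} is automatic because $\{\vr_{J,M}\}$ is a partition of unity on $\Om$, the support property \eqref{4.41} comes from $\supp\vr_{J,M}\subset 2Q_{J,M}$, and the fact that these kernels only satisfy \emph{modified} moment conditions of type \eqref{2.125} (rather than exact ones) is then handled via the Scharf-type relaxation, cf.\ Remark \ref{R4.8}. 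Finally, in your sufficiency direction note that the $\sup_{\beta}$ in \eqref{4.35} sits outside the $\ell_p$-sum over Whitney cubes, so summing the piecewise estimates requires the usual reduction to $\beta=0$ using the exponential decay in $|\beta|$ and the room $\vk>-\ve$ (or the $\ell_r$-replacement of Remark \ref{R2.5}); this is repairable, but should be said.
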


\begin{proof}
{\em Step 1.} Compared with the related arguments in \cite[Theorem 2.38, p.\,54--55]{T08} 
as far as the above indicated wavelet expansions  are concerned there is essentially one new aspect. Let
\begin{\eq}   \label{4.45}
f \in \Fs (\rn) \qquad \text{with} \quad \supp f \subset K = \{ x \in \rn: \ |x| \le c \}
\end{\eq}
for some $c>0$ and with $p,q,s$ as above. Then one can expand $f$ according to \eqref{2.32} as
\begin{\eq}   \label{4.46}
f = \sum_{\substack{\beta \in \nat^n_0, j\in \no, \\ m \in \zn}} 2^{jn} \big(f, \Phi^\beta_{j,m} \big) \, k^\beta_{j,m}
\end{\eq}
where $\Phi^\beta_{j,m}$ are the real analytic functions in \eqref{2.11}, \eqref{2.12}. They have no compact support (in contrast to
$k^\beta_{j,m}$) and it seems to be necessary to deal with the full sum. Multiplication of $f$ with a cut--off function modifies
$k^\beta_{j,m}$ what we wish to avoid. But as already mentioned in Remark \ref{R2.8} the two systems in \eqref{2.13} and \eqref{2.14}
are totally independent. In particular it follows from the related proof in \cite{T06}, especially \cite[(3.110), (3.111), 
p.\,167]{T06}, and \eqref{2.1}--\eqref{2.4} that one needs in \eqref{4.46} only terms with
\begin{\eq}   \label{4.47}
\supp k^\beta_{j,m} \cap \supp f \not= \emptyset.
\end{\eq}
Then one has a local version of Theorem \ref{T2.4} for compactly supported $f$ with \eqref{4.45} (as in the case of compactly supported
wavelets).
\cm
{\em Step 2.} After the indicated localization one can now argue as in the case of wavelets based on \eqref{4.24}: reduction to the
standard situation using \eqref{4.18} and clipping together the pieces adapted to \eqref{4.35}. As before it is sufficient
to deal with $\beta =0$, having in mind that $\Fs (\rn)$ is a $u$-Banach space with $u = \min (1,p,q)$, and that terms with $\beta 
\not= 0$ decay exponentially with respect to $|\beta|$. For this purpose one applies \eqref{4.18} to $\vr_{J,M} (2^{-J} x) f(2^{-J}x)$
combined with suitable translations such that Step 1 can be used. This means, neglecting translations, 
\begin{\eq}   \label{4.48}
\vr_{J,M} (2^{-J} x) f(2^{-J}x) = \sum_{\substack{\beta \in \nat^n_0, j\in \no, \\ m \in \zn}} 2^{jn} \Big( f(2^{-J} \cdot), 
\vr_{J,M} (2^{-J} \cdot) \Phi^\beta_{j,m}(\cdot) \Big) \, k^\beta_{j,m} (x)
\end{\eq}
where the summation can be restricted to terms satisfying the counterpart of \eqref{4.47} with the left--hand side of \eqref{4.48}
in place of $f$.
By \eqref{2.4} one has $k^\beta_{j,m} (2^J x) = k^\beta_{j+J,m} (x)$ and by \eqref{2.11} a related counterpart for $\Phi^\beta_{j,m}
(2^Jx)$.
Re--transformation produces the corresponding terms in 
\eqref{4.42} with $j \ge J \in \no$. 
Everything is local. A fixed $k^\beta_{j,m} (x)$ in \eqref{4.42} may occur several times (finitely many, independently
of $j$, $m$ and $\beta$). This results in the functions $\Psi^\beta_{j,m}$ in \eqref{4.44} with the indicated properties in 
\eqref{4.41}. The rest is now a matter of scaling based on \eqref{4.18}, \eqref{4.24}, and \eqref{4.35}. But these technicalities are
the same as in \cite{T08}.
\end{proof}

\begin{remark}   \label{R4.8}
The functions $\Psi^\beta_{j,m}$ in \eqref{4.41}--\eqref{4.44} originate from the functions $\vr_{J,M} (2^{-J} \cdot)\, \Phi^\beta_{j,m} (\cdot)$
in \eqref{4.48} and their re--transformations. This shows that, in addition to \eqref{4.41}, the crucial observation \eqref{2.17} in
Proposition \ref{P2.1}(ii) has the  following  local counterpart:
\begin{itemize}
\item[] {\em Let $\vk \in \real$ and $K\in \nat$. Then there is a number $C>0$ such that}
\begin{\eq}   \label{4.49}
\big| 2^{\vk |\beta|} D^\alpha \Psi^\beta_{j,m} (x) \big| \le C \, 2^{j |\alpha|}, \qquad x\in \Om,
\end{\eq}
{\em for all $\beta \in \nat^n_0$, all $(j,m) \in \zOm$ and all $\alpha \in \nat^n_0$ with $|\alpha| \le K$.}
\end{itemize}
This makes again clear that $\beta =0$ is the dominant term, especially in the context of estimates. There is a second observation
which makes clear that the kernels $\Psi^\beta_{j,m}$ of the local means in \eqref{4.44} fit in the scheme of what can be expected.
As has been observed  in \cite{Skr98} one can relax the moment conditions in the atomic representation Theorem \ref{T1.7}, which
coincides with \cite[Theorem 1.7, p.\,5]{T08}, by \eqref{2.125} if $L\in \no$ is chosen sufficiently large in dependence  of 
$\As (\rn)$. We described in Proposition  \ref{P2.21} a further refinement going back to \cite{Scha13}. This has been used several
times, especially in connection with weighted spaces, where we relied on \eqref{2.142}. On the other hand, the optimal quarkonial
representations  of the function spaces $\As (\rn)$ in Theorem \ref{T2.4} are based on the local means \eqref{2.30} where the kernels
$\Phi^\beta_{j,m}$ have sufficiently many moment conditions  such that \cite[Theorem 1.15, pp.\,7--8]{T08} can be applied. But 
according to \cite[Corollary 3.15, p.\,293]{Scha13} one relax also the moment condition for kernels of local means in \cite[Theorem
1.15, pp.\,7--8]{T08} in a similar way as in \eqref{2.125} for atomic representations. This applies in particular to the kernels
$\Psi^\beta_{j,m}$ in \eqref{4.44}, which originate from \eqref{4.48}, which means, roughly speaking, from $\vr_{J,M} (x) 
\Phi^\beta_{j,m} (x)$ with $j \ge J$ (finite linear combinations) and what can be justified  in the same was as in 
\eqref{3.20}--\eqref{3.22}. In other words, the kernels $\Psi^\beta_{j,m}$ in \eqref{4.44} fit in the scheme of local means for the
spaces $\As (\rn)$ based, now, on modified moment conditions of type \eqref{2.125}. This is also in good agreement with corresponding
wavelet representations for the spaces $\Fsr (\Om)$ according to  \cite[Theorem 2.23, p.\,44]{T08}  including related peculiarities 
for the starting terms with $j=J$ near the boundary, where no (modified) moment conditions are requested.
\end{remark}

With $\wt{F}^s_{p,q} (\Om)$ as in Definition \ref{D4.1}(ii) one has \eqref{4.29} if $\Om$ is an $E$--thick domain in $\rn$ with $p,q,s$
as in \eqref{4.26}. This covers, as already
said, bounded Lipschitz domains in \rn, but also snowflake domains in $\real^2$. Then one can apply Theorem \ref{T4.7}. We fix the outcome.

\begin{corollary}   \label{C4.9}
Let $\Om$ be an $E$--thick domain in $\rn$. Let
\begin{\eq}   \label{4.50}
0<p,q \le \infty \quad \text{and} \quad s  > \sigma^{n}_{p,q}
\end{\eq}
with $q=\infty$ if $p=\infty$. Then Theorem \ref{T4.7} with $\wt{F}^s_{p,q} (\Om)$ in place of $\Fsr (\Om)$ remains valid.
\end{corollary}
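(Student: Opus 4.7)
The plan is to deduce the corollary directly from Theorem \ref{T4.7} by invoking the identification of the two scales of spaces on $E$--thick domains. By Theorem \ref{T4.5}(ii), which is stated precisely for the parameter range $0<p,q \le \infty$, $s > \sigma^{n}_{p,q}$ (with $q=\infty$ if $p=\infty$) that also governs Corollary \ref{C4.9}, one has the identification
\begin{equation*}
F^{s,\rloc}_{p,q}(\Om) = \wt{F}^s_{p,q}(\Om)
\end{equation*}
with equivalent quasi--norms. Since the sets of distributions on $\Om$ coincide and the quasi--norms are equivalent up to constants depending only on $\Om$ and on $p,q,s$, every statement of Theorem \ref{T4.7} transfers verbatim with $\wt{F}^s_{p,q}(\Om)$ in place of $\Fsr (\Om)$.

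First I would verify the membership part: $f\in D'(\Om)$ belongs to $\wt{F}^s_{p,q}(\Om)$ iff $f \in \Fsr (\Om)$, which by Theorem \ref{T4.7} is equivalent to having a quarkonial representation \eqref{4.39} with $\lambda \in f^s_{p,q}(\Om)^{\vk}$. Then the norm equivalence \eqref{4.40} becomes
\begin{equation*}
\| f \, | \wt{F}^s_{p,q} (\Om) \| \sim \| f \, | \Fsr (\Om) \| \sim \inf \| \lambda \, | f^s_{p,q} (\Om)^{\vk} \|,
\end{equation*}
absorbing the constants of Theorem \ref{T4.5}(ii) into those of \eqref{4.40}. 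The same absorption applies to the reproducing formula \eqref{4.42}--\eqref{4.44} with the kernels $\Psi^\beta_{j,m}$ from Theorem \ref{T4.7}: the sequence $\lambda(f) = \{ 2^{jn}(f,\Psi^\beta_{j,m}) \}$ has its $f^s_{p,q}(\Om)^{\vk}$--quasi--norm equivalent to $\| f \, | \Fsr (\Om) \|$ and hence, by \eqref{4.29}, to $\| f \, | \wt{F}^s_{p,q} (\Om) \|$.

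Unconditional convergence in $D'(\Om)$ of the series \eqref{4.39} and \eqref{4.42} is intrinsic to Theorem \ref{T4.7} and is not affected by the renaming of the ambient space. In fact, since by Definition \ref{D4.1}(ii) any element of $\wt{F}^s_{p,q}(\Om)$ is the restriction to $\Om$ of a distribution in $\wt{F}^s_{p,q}(\ol{\Om}) \subset S'(\rn)$, the test against $\vp \in D(\Om)$ in \eqref{4.37} is unambiguous. The support condition \eqref{4.41} on $\Psi^\beta_{j,m}$ and the local estimate \eqref{4.49} are inherited without change, as is the role of $\vk > -\ve$. There is essentially no obstacle in the argument; the only point that needs attention is to make sure that the range of admissible $(p,q,s)$ in Corollary \ref{C4.9} lies within both the range \eqref{4.38} of Theorem \ref{T4.7} and the range \eqref{4.26} of Theorem \ref{T4.5}(ii), which is indeed the case (including the limiting case $p=q=\infty$, which is admitted in both).
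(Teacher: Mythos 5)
Your proof is correct and coincides with the paper's own argument: the paper proves Corollary \ref{C4.9} by invoking the identification \eqref{4.29} from Theorem \ref{T4.5}(ii), valid for $E$--thick domains under exactly the parameter range \eqref{4.50}, and then applying Theorem \ref{T4.7}. Your more detailed verification that the parameter ranges match, that the norm equivalence constants absorb harmlessly, and that the unconditional convergence and the kernel properties of $\Psi^\beta_{j,m}$ are unaffected by the renaming of the ambient space simply makes explicit what the paper leaves implicit.
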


\begin{proof} 
This follows from the above comments.
\end{proof}

\subsection{Universal representations}    \label{S4.3}
Corollary \ref{C4.9}, based on Theorem \ref{T4.7}, is the natural counterpart of the Theorems \ref{T2.4} and \ref{T2.18} dealing with
quarkonial expansions of the spaces $\Fs (\rn)$, where
\begin{\eq}   \label{4.51}
0<p,q \le \infty, \qquad s > \sigma^{n}_{p,q},
\end{\eq}
with $q= \infty$ if $p= \infty$, and their weighted generalizations. Duality paved the way to quarkonial expansions for spaces with $s<0$ in Section \ref{S2.3} and finally to universal representations as described in Section \ref{S2.6}. One may ask to which extent this
theory has a natural counterpart for spaces on domains. We outline how one can find corresponding assertions. But we do not go into
the details. Just on the contrary. We collect some cornerstones of a related theory, but we do not glue them together.

\subsubsection{Basic spaces}    \label{S4.3.1}
The representations in Section \ref{S2.6} in the context of the dual pairing $\big( S(\rn), S'(\rn) \big)
$ rely on \eqref{2.172}, \eqref{2.173}. It is quite clear that $\big( D(\Om), D' (\Om) \big)$ is not a substitute for this purpose. Let $\Om$ be a bounded
domain (= open set) in \rn. Let again $d(x) = \dist (x,\Gamma)$ be the distance of $x\in \Om$ and $\Gamma = \pa \Om$ based on 
\eqref{4.7}. Let $\vr \in C^\infty (\Om)$ with $\vr^{-1}
(x) \sim d(x)$ and
\begin{\eq}   \label{4.52}
| D^\gamma \vr (x) | \le c_\gamma \, \vr^{1+|\gamma|} (x) \qquad \text{for} \quad x \in \Om, \quad \text{and} \quad
\gamma \in \nat^n_0,
\end{\eq}
(smoothed distance to the boundary). Then the locally convex space $S(\Om)$ consisting of all complex--valued infinitely differentiable
functions $f$ on $\Om$ such that
\begin{\eq}   \label{4.53}
\sup_{x\in \Om} \vr^l (x) \, |D^\alpha f(x) | <\infty, \qquad l\in \no, \quad \alpha \in \nat^n_0,
\end{\eq}
is a better adapted counterpart of $S(\rn)$ for these purposes. According to \cite[Lemma 6.2.3, p.\,409]{T78} the space $S(\Om)
$ coincides with the locally convex subspace
\begin{\eq}   \label{4.54}
S(\Om) = \big\{ f \in S(\rn): \ \supp f \subset \ol{\Om} \big\}
\end{\eq}
of $S(\rn)$. Then it is quite clear what is meant by $S'(\Om)$. Assuming that one has already a satisfactory representation theory for
the unweighted spaces $\As (\Om)$ then one could try to extend this theory to their weighted counterparts as in Section \ref{S2.4}
and to $\big( S(\Om), S'(\Om) \big)$ as in Section \ref{S2.6}. But this will not be done here. We only mention that the (unweighted)
refined localization spaces $\Fsr (\Om)$ as introduced in Definition \ref{D4.3} have a natural weighted counterpart $\Fsr (\Om,
\vr^{\vk})$, $\vk \in \real$, quasi--normed by
\begin{\eq}   \label{4.55}
\Big( \sum^\infty_{J=0} \sum_M \| \vr^{\vk} \, \vr_{J,M} f \, | \Fs (\rn) \|^p \Big)^{1/p} \sim
\Big( \sum^\infty_{J=0} \sum_M  2^{J \vk p}\, \| \vr_{J,M} f \, | \Fs (\rn) \|^p \Big)^{1/p}.
\end{\eq}
If $\Om$ is a bounded $E$--thick domain in $\rn$ (including bounded Lipschitz domains) and if $p,q,s$ are as in \eqref{4.26} ($q=\infty
$ if $p=\infty$)
 then it follows from \eqref{4.29} that
\begin{\eq}   \label{4.56}
\| f \, | \Fsr (\Om, \vr^{\vk} ) \| \sim \| \vr^{\vk} f \, | \wt{F}^s_{p,q} (\Om) \|,
\qquad \vr^{\vk} f \in \wt{F}^s_{p,q} (\Om).
\end{\eq}
Similarly one obtains from \eqref{4.32} for arbitrary bounded domains $\Om$ and with $p,q,s$ as in \eqref{4.26} ($q=\infty
$ if $p=\infty$) that
\begin{\eq}   \label{4.57}
\| f \, | \Fsr (\Om, \vr^{\vk} ) \| \sim \| \vr^{\vk} f \, | \Fs (\Om) \| + \| \vr^{s+\vk} f \, | L_p (\Om) \|.
\end{\eq}
But we do not try to extend what will be outlined below for the unweighted spaces in $\Om$ to these indicated weighted spaces.

We assumed for sake of simplicity that the domain $\Om$ is bounded. If $\Om$ with $\Om \not= \rn$ is unbounded, with the half--space
\begin{\eq}   \label{4.58}
\real^n_+ = \{ x \in \rn: \ x= (x', x_n), \ x' \in \Rn, \ x_n >0 \},
\end{\eq}
$2 \le n \in \nat$, as the most prominent example, one has to combine the above constructions with the related arguments in the 
Sections \ref{S2.4}--\ref{S2.6}. But this is a technical matter which does not cause additional complications.

\subsubsection{Duality}   \label{S4.3.2} 
Representations for spaces $\As (\rn)$ with $s<0$ as considered in Section \ref{S2.3} rely on corresponding
assertions for spaces with positive smoothness and duality in the framework of $\big( S(\rn), S' (\rn) \big)$. Let $\Om$ be an
$E$--thick domain in \rn. Let $\As (\Om)$ and $\wt{A}^s_{p,q} (\Om)$ be the spaces as introduced in Definition \ref{D4.1} assuming
now
\begin{\eq}   \label{4.59}
s> \sigma^{n}_p \ \text{for $B$--spaces}, \qquad s>\sigma^{n}_{p,q} \ \text{for $F$--spaces}
\end{\eq}
with $\sigma^{n}_p$ and $\sigma^{n}_{p,q}$ as in \eqref{4.16}. If, in addition, $p<\infty$, $q<\infty$ then it follows from the
wavelet characterizations according to \cite[Theorem 3.13, pp.\,80--81]{T08} that $D(\Om)$ is dense in the spaces $\wt{A}^s_{p,q}
(\Om)$ with \eqref{4.59} and also dense
$\As (\Om)$ with $0<p,q <\infty$ and $s<0$. In particular it makes sense to ask for dual spaces in the framework of the dual pairing
$\big( D(\Om), D'(\Om) \big)$. Let
\begin{\eq}   \label{4.60}
s>0 \quad \text{and} \quad 1 \le p,q <\infty, \qquad \frac{1}{p} + \frac{1}{p'} = \frac{1}{q} + \frac{1}{q'} =1.
\end{\eq}
Then 
\begin{\eq}   \label{4.61}
\wt{B}^s_{p,q} (\Om)' = B^{-s}_{p', q'} (\Om)
\end{\eq}
is covered by \cite[Theorem 3.30, pp.\,97--98]{T08} where we gave a detailed wavelet proof of this assertion. Although not stated
explicitly there one may assume that
\begin{\eq}   \label{4.62}
\wt{F}^s_{p,q} (\Om)' = F^{-s}_{p',q'} (\Om)
\end{\eq}
for
\begin{\eq}   \label{4.62a}
s>0, \quad 1<p<\infty, \quad 1\le q <\infty \quad \text{and} \quad \frac{1}{p} + \frac{1}{p'} = \frac{1}{q} + \frac{1}{q'} =1.
\end{\eq}
These are the $\Om$--counterparts of the well--known duality assertions for the related spaces $\As (\rn)$ according to \cite[Theorem
2.11.2, p.\,178]{T83}. As far as the duality \eqref{4.60} and \eqref{4.61} with $H^s_p = F^s_{p,2}$ is concerned we refer also to
\cite[Theorem 2.10.5/1, p.\,235, Theorem 4.8.1, p.\,332]{T78}.

\subsubsection{Representability}  \label{S4.3.3} 
The quarkonial representations of the spaces $\As (\rn)$
in Section \ref{S2.3} for all $s <0$ and $0<p,q \le
\infty$ ($p<\infty$ for $F$--spaces) rely on corresponding assertions for spaces with positive smoothness and duality. As for an 
$\Om$--counterpart one may ask for a substitute of Proposition \ref{P2.9}, which means the representability of $f\in \Cc^s 
(\Om)$ where again
\begin{\eq}    \label{4.63}
\Cc^s (\Om) = B^s_{\infty, \infty} (\Om), \qquad s<0.
\end{\eq}
Let $\Om$ be an $E$--thick domain in $\rn$ as described in Section \ref{S4.1}, including special and bounded Lipschitz domains in \rn,
snowflake domains in $\real^2$ and also the half--spaces $\rnp$ according to \eqref{4.58}. Then one has by \eqref{4.60}, \eqref{4.61},
\begin{\eq}   \label{4.64}
\Cc^s (\Om) = \wt{B}^{-s}_{1,1} (\Om)', \qquad s<0.
\end{\eq}
This is the counterpart of \eqref{2.53}. Now one can argue as in Proposition \ref{P2.9} and its proof. By Theorem \ref{T4.7} and
Corollary \ref{C4.9} one has
\begin{\eq}   \label{4.65}
g = \sum_{\beta \in \nat^n_0, (j,m) \in \zOm} 2^{jn} \, \big( f, \Psi^\beta_{j,m} \big) \, k^\beta_{j,m}, \qquad g \in 
\wt{B}^{-s}_{1,1} (\Om) = \wt{F}^{-s}_{1,1} (\Om)
\end{\eq}
as the counterpart of \eqref{2.57}. One can apply the duality argument \eqref{2.58}.  Then one obtains the following assertion:
\begin{itemize}
\item[] {\em Let $\Om$ be an $E$--thick domain in \rn, $\Om \not= \rn$. Then any $f\in \Cc^s (\Om)$, $s<0$, can be represented in the weak$^*$--topology of $\Cc^s (\Om)$ as}
\begin{\eq}   \label{4.66}
f = \sum_{\beta \in \nat^n_0, (j,m) \in \zOm} 2^{jn} \, \big( f, k^\beta_{j,m} \big) \, \Psi^\beta_{j,m}.
\end{\eq}
\end{itemize}
Recall that both $k^\beta_{j,m} \in D(\Om)$ and $\Psi^\beta_{j,m} \in D(\Om)$. It follows by embedding that any $f\in \As (\Om)$
with $A \in \{B,F \}$, $s\in \real$ and $0<p,q \le \infty$ ($p<\infty$ for $F$--spaces) can be represented according to \eqref{4.66}.
Furthermore as discussed in Remark \ref{R4.8} the function $\Psi^\beta_{j,m}$ may not have the desired moment conditions as requested
for atoms in spaces $\As (\rn)$ with $s<0$, but they have modified moment conditions of type \eqref{2.125}. This shows that expansions 
of type  \eqref{4.66} fit in the scheme of atomic representations of spaces $\As$ with $s<0$. We return to this point in connection
with \eqref{4.67} below. This applies even more to the non--negative kernels $k^\beta_{j,m}$ according to \eqref{2.3}, \eqref{2.4} of
the local means $2^{jn} \big( f, k^\beta_{j,m} \big)$ in the spaces $\As (\rn)$ with $s<0$ as considered in \cite[Theorem 1.15,
p.\,7]{T08}.

\subsubsection{Universal representations, I}    \label{S4.3.4}
The question arises to which extent the (so far unweighted) spaces $\As (\Om)$ can be
characterized in terms of the expansion \eqref{4.66}. If $s<0$ then one asks for the $\Om$--counterpart of Theorem \ref{T2.10}. Now
one has to rely on $f^s_{p,q} (\Om)^{\vk}$ as introduced in \eqref{4.33}--\eqref{4.35} instead of $f^s_{p,q} (\rn)^{\vk}$ in 
\eqref{2.25}. This must
be complemented by corresponding spaces $b^s_{p,q} (\Om)^{\vk}$. The building blocks $\Phi^\beta_{j,m}$ in 
Theorem \ref{T2.10} have moment conditions of all orders. This is not guaranteed for the building blocks $\Psi^\beta_{j,m}$ in
\eqref{4.66} and in its  related generalizations
\begin{\eq}  \label{4.67}
f = \sum_{\beta \in \nat^n_0, (j,m) \in \zOm} \lambda^\beta_{j,m} \, \Psi^\beta_{j,m}.
\end{\eq}
However as discussed after \eqref{4.66} the building blocks $\Psi^\beta_{j,m}$ have modified moment conditions, at least if $j>J$ in
$(j,m) \in \zOm$ according to \eqref{4.33}. This might be not the case for the related starting terms $(j,m) \in \zOm$ with $j=J$ in
\eqref{4.33}. But this is also not necessary. 
One does not need moment conditions for atoms near the boundary of $E$--thick domains. The corresponding arguments may be
found in \cite[p.\,78]{T08} with a reference to \cite{TrW96}. It is just the basis to prove \eqref{4.29} for $E$--thick domains in
\cite[Proposition 3.10,  pp.\,77--78]{T08}. But it remains to be seen if these instruments can be used to establish the desired
quarkonial representations.

\subsubsection{Universal representations, II}  \label{S4.3.5}
The representations \eqref{4.66}, \eqref{4.67} are universal for all spaces $\As (\Om)$
with $A \in \{B,F \}$, $s\in \real$ and $0<p,q \le \infty$ ($p<\infty$ for $F$--spaces). But characterizations \eqref{4.67} with
$\lambda \in a^s_{p,q} (\Om)^{\vk}$ can only be expected if $s<0$. However there is the following $\Om$--counterpart of \eqref{2.77}
underlying Corollary \ref{C2.13}. Let $\Om$ be a bounded Lipschitz domain in \rn. Then it is also a bounded $E$--thick domain as has
already been mentioned in Section \ref{S4.1}. Let 
\begin{\eq}   \label{4.68}
A \in \{B,F \}, \quad s \in \real \quad \text{and} \quad 0<p,q \le \infty
\end{\eq}
($p<\infty$ for $F$--spaces). Let $N \in \no$. Then
\begin{\eq}   \label{4.69}
\| f \, | \As(\Om) \| \sim \sup_{0 \le |\alpha| \le N} \| D^\alpha f \, | A^{s-N}_{p,q} (\Om) \|,
\end{\eq}
(equivalent quasi--norms). This is covered by \cite[Proposition 4.21, p.\,113]{T08}.
Now one is in the same situation as in Corollary \ref{C2.13}: Possible quarkonial characterizations of
spaces $\As (\Om)$ with $s<0$, based on \eqref{4.66}, and described in the above Section \ref{S4.3.4}
can be extended to all $s\in \real$ (at
the expense of constrained conditions of type \eqref{2.80} with $\Om$ in place of $\rn$).

\subsubsection{Universal representations, III}  \label{S4.3.6}
So far we described possible quarkonial  representations for the unweighted spaces 
$\As (\Om)$. One may ask for a weighted counterpart as indicated in Section \ref{S4.3.1} above resulting in the $\Om$--version of Section \ref{S2.6}.

\subsection{Positivity}   \label{S4.4}
We dealt in Section \ref{S3.3} with the so--called positivity property of the spaces $\As (\rn)$. Some assertions obtained there can be transferred to related spaces $\As (\Om)$ as introduced in Definition \ref{D4.1} where $\Om$ is an arbitrary domain (=
open set) in \rn. Similarly as in Section \ref{S3.3} we say that $f\in D'(\Om)$ is real if $f(\vp) \in \real$ for any real $\vp \in
D(\Om)$. Furthermore $f\in D'(\Om)$ is said to be positive, $f \ge 0$, if $f(\vp) \ge 0$ for any $\vp \in D(\Om)$ with $\vp(x) \ge 0$
for all $x\in \Om$. An obvious modification of \eqref{3.56}, \eqref{3.57} shows that $f$ is real if $f$ is positive.

\begin{definition}   \label{D4.10}
Let $\Om$ be an arbitrary domain $($= open set$)$ in \rn. The space $\As (\Om)$ with $A \in \{B,F \}$, $s\in \real$ and $0<p,q \le
\infty$ is said to have the positivity property if any real $f\in \As (\Om)$ can be decomposed in
\begin{\eq}   \label{4.70}
f= f^1 - f^2 \quad \text{with} \quad f^l \ge 0, \quad f^l \in \As (\Om) \quad \text{for} \quad l =1,2,
\end{\eq}
and
\begin{\eq}   \label{4.71}
\| f \, |\As (\Om) \| \sim \| f^1 \, |\As (\Om) \| + \| f^2 \, |\As (\Om) \|
\end{\eq}
with equivalence constants which are independent of $f$.
\end{definition}

\begin{remark}  \label{R4.11}
This is the counterpart of Definition \ref{D3.8} (including $F^s_{\infty,q} (\Om)$).
\end{remark}

The following assertion extends Proposition \ref{P3.10} from $\rn$ to \Om.

\begin{proposition}    \label{P4.12}
Let $\Om$ be an arbitrary domain $($=open set$)$ in \rn. Then $\As (\Om)$ has the positivity property if $\As (\rn)$ has the positivity
property. 
\end{proposition}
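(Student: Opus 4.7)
The strategy is the standard extend–decompose–restrict scheme that one would use for any property stable under restriction. Given a real $f \in \As(\Om)$ and $\varepsilon > 0$, I would first invoke Definition \ref{D4.1}(i) to choose an extension $g \in \As(\rn)$ with $g|\Om = f$ and $\|g \, | \As(\rn)\| \le (1+\varepsilon)\,\|f \, | \As(\Om)\|$. Since $\As(\rn)$ is assumed to have the positivity property (applied to real elements), the plan is to replace $g$ by a real extension, decompose it in $\rn$, and then restrict each positive piece to $\Om$.

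A small but necessary point is the reduction to a real extension. Writing $g = g_1 + i g_2$ with $g_1 = \Rea g$ and $g_2 = \Ima g$, the distributions $g_1, g_2$ belong to $\As(\rn)$ because complex conjugation is a bounded (indeed isometric up to a constant) operation on all the spaces $\As(\rn)$, and one has $\|g_l \, | \As(\rn)\| \le c \,\|g \, | \As(\rn)\|$ for $l=1,2$. As $f$ is real, the restriction $g_2|\Om$ vanishes, hence $g_1|\Om = f$, giving a real extension with controlled $\As(\rn)$-norm. I would then apply the positivity property of $\As(\rn)$ to split $g_1 = g^1 - g^2$ with $g^l \ge 0$ in $S'(\rn) \cap \As(\rn)$ and
\begin{equation*}
\|g_1 \, | \As(\rn)\| \sim \|g^1 \, | \As(\rn)\| + \|g^2 \, | \As(\rn)\|.
\end{equation*}

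Setting $f^l := g^l|\Om \in \As(\Om)$ gives $f = f^1 - f^2$. To see that $f^l \ge 0$ in $D'(\Om)$, note that any non-negative $\vp \in D(\Om)$ extends to an element of $S(\rn)$ that is still non-negative, so $f^l(\vp) = g^l(\vp) \ge 0$. The upper estimate
\begin{equation*}
\|f^1 \, | \As(\Om)\| + \|f^2 \, | \As(\Om)\| \le c \,\|f \, | \As(\Om)\|
\end{equation*}
follows by chaining $\|f^l \, | \As(\Om)\| \le \|g^l \, | \As(\rn)\|$ with the two preceding estimates and letting $\varepsilon \to 0$. The converse inequality is automatic from $f = f^1 - f^2$ and the quasi-triangle inequality in $\As(\Om)$. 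There is essentially no obstacle here; the only mildly delicate point is confirming continuity of $\Rea$ on the whole scale $\As(\rn)$, which is standard and already implicit in all the representation theorems of Sections \ref{S2.2}--\ref{S2.3}, and the observation that positivity in $S'(\rn)$ is preserved under restriction to $D'(\Om)$.
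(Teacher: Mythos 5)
Your proposal is correct and follows essentially the same route as the paper: extend $f$ to $g\in\As(\rn)$, pass to a real extension $g_1=\Rea g$ (which still restricts to $f$ on $\Om$ because the imaginary part must vanish there), decompose $g_1=g^1-g^2$ in $\rn$ via the assumed positivity property, and restrict each positive piece. The only stylistic difference is in justifying that the real part has comparable norm: you invoke the boundedness of complex conjugation on $\As(\rn)$ directly (a clean, elementary observation), whereas the paper appeals to its own decomposition \eqref{3.66} and the equivalence \eqref{3.67}, which it established earlier by quarkonial arguments; both reach the same conclusion that the infimum in \eqref{4.10} may be taken over real extensions.
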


\begin{proof}  
Let $f\in \As (\Om)$ be real and let $f= g|\Om$ with $g\in \As (\rn)$ as in \eqref{4.9}, \eqref{4.10}. According to \eqref{3.66} one can decompose $g\in \As (\rn)$ as
\begin{\eq}   \label{4.72}
g= g^1 + i g^2 \qquad \text{where both $g^1$ and $g^2$ are real}.
\end{\eq}
Then it follows from
\begin{\eq}     \label{4.73}
f(\vp) = g^1 (\vp) + i g^2 (\vp), \qquad \vp \in D(\Om),
\end{\eq}
that $g^2 (\vp) =0$ and $g^2 | \Om =0$. A suitable counterpart of \eqref{3.67} shows that the infimum in \eqref{4.10} can be restricted
to real $g$. The assumed decomposition of $g$ according to Definition \ref{D3.8} and its restriction to $\Om$ prove the above 
proposition.
\end{proof}

There is a converse of the above proposition, at least for the spaces $\Fs (\Om)$, including again $F^s_{\infty, q} (\Om)$.

\begin{corollary}   \label{C4.12}
If $\Fs (\omega)$ has the positivity property for some domain $\omega$ in $\rn$, $s\in \real$ and $0<p,q \le \infty$. Then $\Fs (\Om)$
has the positivity property for all domains $\Om$ in \rn, $\Om \not= \rn$, and for $\Fs (\rn)$.
\end{corollary}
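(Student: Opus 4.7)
The plan is to reduce to showing that $\Fs(\rn)$ itself has the positivity property; Proposition \ref{P4.12} will then transfer positivity to $\Fs(\Om)$ for every $\Om \not= \rn$. One may assume $\omega \not= \rn$, since otherwise there is nothing to prove. The strategy is to localize a real $f \in \Fs(\rn)$ by a fine partition of unity, transplant each localized piece into a small ball contained in $\omega$, apply the positivity hypothesis on $\Fs(\omega)$, and then transport the positive decomposition back to $\rn$.

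Concretely, fix $x_0 \in \omega$ and $r>0$ with $\ol{B(x_0, 2r)} \subset \omega$, and choose a non--negative $\eta \in C^\infty_0(\omega)$ with $\eta \equiv 1$ on $B(x_0, r)$. Let $\{\psi^{(\mu)}_M\}_{M\in \zn}$, $\psi^{(\mu)}_M(x) = \psi(x/\mu - M)$, be a scaled partition of unity on $\rn$ with $\sum_M \psi^{(\mu)}_M \equiv 1$ and $\supp \psi^{(\mu)}_M$ of diameter $\le r/4$; the norm equivalence \eqref{4.15} remains valid for this partition (with constants depending on $\mu$). For a real $f \in \Fs(\rn)$ write $g_M := \psi^{(\mu)}_M f$ and translate $\tilde g_M(y) := g_M(y + \mu M - x_0)$, so that each $\tilde g_M$ is a real element of $\Fs(\rn)$ with support in $B(x_0, r/2) \subset \omega$. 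By the positivity hypothesis applied to $\tilde g_M|_\omega \in \Fs(\omega)$, there exist $h_M^1, h_M^2 \in \Fs(\omega)$ with $h_M^l \ge 0$, $h_M^1 - h_M^2 = \tilde g_M|_\omega$, and $\|h_M^l\,|\,\Fs(\omega)\| \le c\,\|g_M\,|\,\Fs(\rn)\|$. Since $\eta \equiv 1$ on $\supp \tilde g_M$, one still has $\eta h_M^1 - \eta h_M^2 = \tilde g_M|_\omega$; each $\eta h_M^l$ is non--negative (the product of a non--negative smooth function with a positive distribution) and compactly supported in $\omega$. Extension by zero therefore yields $\tilde h_M^l \in \Fs(\rn)$, non--negative and supported in $\supp \eta$. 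Translating back produces $H_M^l \in \Fs(\rn)$ with $H_M^l \ge 0$ and $H_M^1 - H_M^2 = g_M$, and setting $f^l := \sum_M H_M^l$ gives a locally finite sum defining positive distributions with $f = f^1 - f^2$.

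For the norm equivalence \eqref{4.71}, the quasi--triangle inequality supplies $\|f\,|\,\Fs(\rn)\| \le c\,(\|f^1\,|\,\Fs(\rn)\| + \|f^2\,|\,\Fs(\rn)\|)$. In the reverse direction one uses \eqref{4.15} applied to $f^l$, the finite overlap of the shifted supports (each $\supp \psi^{(\mu)}_{M'}$ is met by $O((r/\mu)^n)$ of the $H_M^l$), and the uniform bound $\|H_M^l\,|\,\Fs(\rn)\| \le c\,\|g_M\,|\,\Fs(\rn)\|$, followed by a final application of \eqref{4.15} to $f$. The main obstacle is precisely this last uniform bound, which rests on the extension--by--zero estimate $\|\tilde h_M^l\,|\,\Fs(\rn)\| \le c\,\|h_M^l\,|\,\Fs(\omega)\|$ uniform in all admissible $s, p, q$: writing $\tilde h_M^l = \eta H$ for any extension $H \in \Fs(\rn)$ of $h_M^l$, this reduces to the assertion that a fixed cutoff $\eta \in C^\infty_0(\omega)$ is a pointwise multiplier on $\Fs(\rn)$ for every $s \in \real$ and every $0<p,q\le \infty$, which follows from the multiplier theory recalled in Remark \ref{R3.5} since $\eta \in \Cc^{\vr}(\rn)$ for every $\vr>0$. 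A minor additional point is the verification of \eqref{4.15} and the multiplier estimate for $F^s_{\infty,q}(\rn)$, which is available in \cite{T20} and thus presents no obstruction.
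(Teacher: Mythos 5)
Your proposal is correct and is essentially the paper's own argument: both reduce the statement, via Proposition~\ref{P4.12}, to proving the positivity property of $\Fs(\rn)$, and both obtain it by localizing a real $f$ with a smooth partition of unity, using translation invariance to apply the assumed positivity on $\omega$ to the individual pieces, cutting the resulting positive parts off with a non--negative smooth multiplier so that they extend by zero, and reassembling everything through the localization property \eqref{4.15}. The only difference is organizational: the paper first passes to the unit cube $Q=(0,1)^n$ and works with finitely many families of well--separated lattice cubes, whereas you transplant a fine partition of unity into a single small ball inside $\omega$ and control the resulting bounded overlap of the shifted supports directly.
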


\begin{proof}
By the above proposition and its proof  it is sufficient to show that $\Fs (\rn)$ has the positivity property if $\Fs (Q)$ with $Q=
(0,1)^n$ has the positivity property. Let
\begin{\eq}    \label{4.74}
\Om = \bigcup^\infty_{j=1} Q_j \quad \text{with} \quad Q_j = M_j + (0,1)^n, \quad M_j \in \zn, \quad j\in \nat,
\end{\eq}
such that
\begin{\eq}   \label{4.75}
\dist (Q_j, Q_k) \ge 1 \qquad \text{if} \quad j\not= k.
\end{\eq}
Then it follows from Proposition \ref{P4.12}, its proof, elementary pointwise multiplier assertions and the localization property
\eqref{4.15} that any
\begin{\eq}   \label{4.76}
f\in \Fs (\rn) \quad \text{real} \quad \text{with} \quad \supp f \subset \Om
\end{\eq}
admits the desired decomposition. One can represent $\rn$ as the finite union of domains of type \eqref{4.74}. This can be combined
with a suitable finite resolution of unity by non--negative smooth functions. This proves the corollary.
\end{proof}

\begin{remark}   \label{R4.13}
This argument cannot be extended immediately to related spaces $\Bs (\Om)$. There is no counterpart of \eqref{4.15} if $p \not= q$. 
But as discussed in Remark \ref{R3.11} the restriction $s> \sigma^n_p$ in \eqref{3.61} is quite natural. This is not so clear for its
counterpart $s> \sigma^n_{p,q}$ for the $F$--spaces.
\end{remark}

One can ask whether the positivity property of $\As (\rn)$ or of $\As (\Om)$ can be passed to suitable subspaces. This question
applies in particular to the spaces $\wt{F}^s_{p,q} (\Om)$ according to Definition \ref{D4.1}. This is the case if $\Om$ is an
$E$--thick domain in \rn. It follows from a corresponding assertion  for $\Fsr (\Om)$ and
\begin{\eq}   \label{4.77}
\Fsr (\Om) = \wt{F}^s_{p,q} (\Om) \qquad \text{for} \quad 0<p,q \le \infty \quad \text{and} \quad s> \sigma^n_{p,q}
\end{\eq}
(with $q=\infty$ if $p= \infty$) according to Theorem \ref{T4.5}(ii). Let again
\begin{\eq}   \label{4.78}
\sigma^n_{p,q} = n \Big( \max \big( \frac{1}{p}, \frac{1}{q}, 1 \big) - 1 \Big),
\end{\eq}

\begin{corollary}   \label{C4.15}
Let $\Om$ be an arbitrary domain in \rn, $\Om \not= \rn$. Let $\Fsr (\Om)$ be the spaces as introduced in Definition \ref{D4.3}. Let
\begin{\eq}   \label{4.79}
0<p,q \le \infty \quad \text{and} \quad s> \sigma^n_{p,q}
\end{\eq}
$($with $q=\infty$ if $p= \infty)$. Then $\Fsr (\Om)$ has the positivity property $($in obvious modification of Definition \ref{D4.10}$)$.
\end{corollary}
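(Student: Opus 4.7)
The plan is to mimic the proof of Proposition 3.10 using Theorem 4.7 in place of Theorem 2.4. Let $f \in \Fsr(\Om)$ be real. I would first invoke Theorem 4.7 to obtain the representation
\begin{equation*}
f = \sum_{\beta \in \nat^n_0, (j,m) \in \zOm} \lambda^\beta_{j,m}(f) \, k^\beta_{j,m}, \qquad \lambda^\beta_{j,m}(f) = 2^{jn} (f, \Psi^\beta_{j,m}),
\end{equation*}
with the norm equivalence $\|f \, | \Fsr(\Om)\| \sim \|\lambda(f) \, | f^s_{p,q} (\Om)^{\vk}\|$. Since Theorem 4.7 guarantees that the kernels $\Psi^\beta_{j,m} \in D(\Om)$ are \emph{real} and $f$ is real, all coefficients $\lambda^\beta_{j,m}(f)$ are real numbers.

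Next I would split by the sign of the coefficients and set
\begin{equation*}
f^1 = \sum_{\beta, (j,m)} \max\bigl(\lambda^\beta_{j,m}(f), 0\bigr) \, k^\beta_{j,m}, \qquad f^2 = \sum_{\beta, (j,m)} \max\bigl(-\lambda^\beta_{j,m}(f), 0\bigr) \, k^\beta_{j,m}.
\end{equation*}
The crucial structural point, as in Proposition 3.10, is that the quarks $k^\beta_{j,m}$ are pointwise non-negative by \eqref{2.3}, \eqref{2.4}. Hence both sums define non-negative elements of $D'(\Om)$, and the identity $f = f^1 - f^2$ holds in $D'(\Om)$ by construction.

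To establish the norm equivalence \eqref{4.71}, observe that the truncations $\lambda^{\beta,\pm}_{j,m} := \max(\pm\lambda^\beta_{j,m}(f), 0)$ satisfy $|\lambda^{\beta,\pm}_{j,m}| \le |\lambda^\beta_{j,m}(f)|$ pointwise, so both sequences lie in $f^s_{p,q} (\Om)^{\vk}$ with quasi-norm bounded by $\|\lambda(f) \, | f^s_{p,q}(\Om)^{\vk}\|$. Applying Theorem 4.7 in the converse direction to the admissible representations of $f^1$ and $f^2$ yields
\begin{equation*}
\|f^l \, | \Fsr(\Om)\| \le c \, \|\lambda(f) \, | f^s_{p,q}(\Om)^{\vk}\| \sim \|f \, | \Fsr(\Om)\|, \qquad l=1,2,
\end{equation*}
while the reverse inequality $\|f \, | \Fsr(\Om)\| \le c(\|f^1\,|\Fsr(\Om)\| + \|f^2\,|\Fsr(\Om)\|)$ is just the quasi-triangle inequality applied to $f = f^1 - f^2$.

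The main obstacle is conceptual rather than technical: it is ensuring that the coefficient-producing kernels $\Psi^\beta_{j,m}$ are genuinely real. This is not a property of arbitrary local-means kernels but relies on the construction via \eqref{4.48} from the real resolution of unity $\vr_{J,M}$ and the real analytic functions $\Phi^\beta_{j,m}$ from \eqref{2.12}, together with the non-negativity of $k^\beta_{j,m}$ built into \eqref{2.1}--\eqref{2.3}. Once these structural features of Theorem 4.7 are in hand, the splitting argument is a routine transcription of Proposition 3.10 to the domain setting, and no interpolation or duality step is required.
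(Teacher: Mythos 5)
Your proposal is correct and follows essentially the same route as the paper: invoke Theorem \ref{T4.7}, use that the kernels $\Psi^\beta_{j,m}$ are real (via \eqref{2.12} and Remark \ref{R4.8}) so the coefficients of a real $f$ are real, and then split by sign exactly as in Proposition \ref{P3.10}, exploiting the non-negativity of the quarks $k^\beta_{j,m}$. The norm estimates via the sequence space $f^s_{p,q}(\Om)^{\vk}$ and the quasi-triangle inequality are the intended completion of the argument.
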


\begin{proof}
We rely on Theorem \ref{T4.7}. According to \eqref{2.12} and Remark \ref{R4.8} the functions $\Psi^\beta_{j,m}$ in \eqref{4.44} are
real. But then one can argue as in Proposition \ref{P3.10} using that the functions $k^\beta_{j,m}$ are non--negative.
\end{proof}

\end{document}